\documentclass[11pt,a4paper]{article}

\usepackage{epsf,epsfig,amsfonts,amsgen,amsmath,amstext,amsbsy,amsopn,amsthm,cases,listings,color
%,lineno
}
\usepackage{ebezier,eepic}
\usepackage{color}
\usepackage{multirow}
\usepackage{epstopdf}
\usepackage{graphicx}   
\usepackage{pgf,tikz}
\usepackage{mathrsfs}
\usepackage[marginal]{footmisc}
\usepackage{enumerate}
\usepackage{enumitem}
\usepackage[titletoc]{appendix}
\usepackage{booktabs}
\usepackage{url}
\usepackage{mathtools}

\usepackage{pgfplots}
\usepackage{authblk}
\usepackage{amssymb}

\usepackage{wasysym}

\usepackage{empheq}

\usepackage{dsfont}

\usepackage{tikz}
\usepackage{longtable}
\usepackage{float}
\usepackage{subfigure}
\pgfplotsset{compat=1.18}
\usepackage{mathrsfs}
\usepackage{wasysym} 
\usetikzlibrary{arrows}
\usepackage{aligned-overset}
\usepackage{bm}%for vector
\usepackage{bbm}%for vector
\usepackage[T1]{fontenc}%for polish hook
%%%%%%%%%%%%%%%%%%%%%%%%%%%%%%%%%%%%%%%%%%%%%%%%
%Do not use this in journal or arxiv version.
\usepackage[backref=page]{hyperref}
% \usepackage[notref,notcite,color]{showkeys}
%%%%%%%%%%%%%%%%%%%%%%%%%%%%%%%%%%%%%%%%%%%%%%%%

\usepackage{algorithm}
\usepackage{algpseudocode}
\usepackage{multicol}

\usepackage[thinc]{esdiff}

\allowdisplaybreaks[1]

\definecolor{uuuuuu}{rgb}{0.27,0.27,0.27}
\definecolor{sqsqsq}{rgb}{0.1255,0.1255,0.1255}

\setlength{\textwidth}{150mm} \setlength{\oddsidemargin}{7mm}
\setlength{\evensidemargin}{7mm} \setlength{\topmargin}{-5mm}
\setlength{\textheight}{245mm} \topmargin-18mm

\newtheorem{definition}{Definition} [section]
\newtheorem{theorem}[definition]{Theorem}
\newtheorem{lemma}[definition]{Lemma}

\newtheorem{claim}[definition]{Claim}

\newtheorem{fact}[definition]{Fact}

%for join of two graphs

\newcommand{\norm}[1]{\lVert#1\rVert}%for norm

\usepackage{titlesec} % 放在导言区
% \titleformat{\section}[block]{\normalfont\Large}{\thesection}{1em}{} % 不加粗

%\begin{linenumbers}
\setlength{\parindent}{0pt}
\parskip=8pt
\begin{document}
\title{\bf\Large Tetrahedron Conjecture in the $\ell_2$-norm}
% \author[1]{Levente Bodn\'ar\thanks{Research supported by ERC Advanced Grant 101020255. Email: \texttt{bodnalev@gmail.com}}}
\author{
Levente~Bodn\'ar \quad 
Wanfang~Chen \quad 
Jinghua~Deng \quad 
Jianfeng~Hou \\\vspace{-0.5em} 
Xizhi~Liu  \quad
Jialei~Song \quad
Jiabao~Yang \quad 
Yixiao~Zhang
\vspace{-1.5em} 
}
\date{\today}
%%%%%%%%%%%%%%%%%%%%%%%%%%%%%%%%%%%%%%%%%%%%%%%%%%%%%        
\maketitle
%%%%%%%%%%%%%%%%%%%%%%%%%%%%%%%%%%%%%%%%%%%%%
\begin{abstract}
    The famous Tetrahedron Conjecture of Tur\'{a}n from the 1940s asserts that the number of edges in an $n$-vertex $3$-graph without the tetrahedron, the complete $3$-graph on four vertices, cannot exceed that of the balanced complete cyclic $3$-partite $3$-graph, whose edges are of types $V_1 V_2 V_3$, $V_1 V_1 V_2$, $V_2 V_2 V_3$, and $V_3V_3 V_1$.
    A recent surprising result of Balogh--Clemen--Lidick\'{y}~[J. Lond. Math. Soc. (2) 106 (2022)] shows that this conjecture is asymptotically true in the $\ell_2$-norm, where the number of edges is replaced by the sum of squared codegrees. They further conjectured that, in this $\ell_2$-norm setting, the $3$-partite construction is uniquely extremal for large $n$. 
    We confirm this conjecture. 
    
    Two key ingredients in our proofs include establishing a Mantel theorem for vertex-colored graphs that forbid certain types of triangles, and introducing a novel procedure integrated into Simonovits’ stability method, which essentially reduces the task to verifying that the $\ell_2$-norm of certain near-extremal constructions increases under suitable local modifications. The strategy in the latter may be of independent interest and potentially applicable to other extremal problems. 
\end{abstract}

\renewcommand\thefootnote{} 
\footnotetext{\textit{Keywords:} Tetrahedron Conjecture, hypergraph Tur\'{a}n problem, stability method \\[0.15em]
\textit{MSC2020:} 05C35, 05C65, 05D05}
\addtocounter{footnote}{-1} 
\renewcommand\thefootnote{\arabic{footnote}} 

%%%%%%%%%%%%%%%%%%%%%%%%%%%%%%%%%%%%%%%%%%%%%
\section{Introduction}\label{SEC:Introduction}
Given an integer $r\ge 2$, an \emph{$r$-uniform hypergraph} (henceforth an \emph{$r$-graph}) $\mathcal{H}$ is a collection of $r$-subsets of some set $V$. 
We call $V$ the \emph{vertex set} of $\mathcal{H}$ and denote it by $V(\mathcal{H})$. 
When $V$ is understood, we usually identify a hypergraph $\mathcal{H}$ with its set of edges.

Given a family $\mathcal{F}$ of $r$-graphs, an $r$-graph $\mathcal{H}$ is \emph{$\mathcal{F}$-free} if it does not contain any member of $\mathcal{F}$ as a subgraph. 
A fundamental problem in Extremal Combinatorics is to determine the extremal properties of $\mathcal{F}$-free $r$-graphs on $n$ vertices. 
In particular, the Tur\'{a}n problem asks for the \emph{Tur\'{a}n number} $\mathrm{ex}(n,\mathcal{F})$, the maximum number of edges in an $\mathcal{F}$-free $r$-graphs on $n$ vertices. 

For $r=2$ (i.e., graphs), the value of $\mathrm{ex}(n,\mathcal{F})$ is well understood up to an additive error term of order $o(n^2)$ thanks to the celebrated Erd\H{o}s--Stone Theorem~\cite{ES46} (see also~\cite{ES66}). 
In contrast, for $r \ge 3$, determining $\mathrm{ex}(n,\mathcal{F})$ even asymptotically is notoriously difficult in general. 
Denote by $K_{\ell}^{r}$ the complete $r$-graph on $\ell$ vertices (with the superscript omitted when $r=2$). 
The seminal work of Tur\'{a}n~\cite{Tur41} determined the exact value of $\mathrm{ex}(n,K_{\ell})$ for every $\ell \ge 3$ (with the case $\ell=3$ solved earlier by Mantel~\cite{Mantel07}). 
Erd\H{o}s~\cite{Erd81} offered \$500 for the asymptotic determination of $\mathrm{ex}(n,K_{\ell}^{r})$ for any single pair $\ell, r$ with  $\ell > r \ge 3$. 
This prize remains unclaimed, despite decades of intensive work. For further background and related results, we refer the reader to the surveys~\cite{Fur91,Caen94Survey,Sid95,Kee11}. 

Perhaps the most famous instance is the tetrahedron $K_{4}^{3}$, the complete $3$-graph on four vertices. 
Given three pairwise disjoint sets $V_1, V_2, V_3$, let $\mathbb{C}[V_1, V_2, V_3]$ denote the $3$-graph on $V_1 \cup V_2 \cup V_3$ consisting of all triples of types $V_1 V_2 V_3$, $V_1 V_1 V_2$, $V_2 V_2 V_3$, and $V_3 V_3 V_1$ (see Figure~\ref{fig:CB}), that is, 
\begin{align*}
    \mathbb{C}[V_1, V_2, V_3]
    \coloneqq \left\{e \colon \big(|e\cap V_1|, |e\cap V_2|, |e\cap V_3|\big) \in \{(1,1,1), (2,1,0),(0,2,1), (1,0,2)\}\right\}. 
\end{align*}
When $V_1  \cup V_2 \cup V_3 = [n]$ is a balanced partition of $[n]$, we write $\mathbb{C}_{n}$ for $\mathbb{C}[V_1, V_2, V_3]$.

The famous Tetrahedron Conjecture of Tur\'{a}n~\cite{Tur41} asserts that $\mathrm{ex}(n,K_{4}^{3}) = |\mathbb{C}_{n}|$. 
Various distinct constructions achieving the same bound as $\mathbb{C}_{n}$ were discovered later (see, e.g.,~\cite{Bro83,Kos82,Fon88,Fro08}). 
In particular, this implies that the Tur\'{a}n problem for $K_{4}^{3}$ does not exhibit the stability property (see~{\cite[Proposition~1.7]{LM22stab}}), which is likely one reason for the notorious difficulty of the conjecture. 
Successively better upper bounds for $\mathrm{ex}(n,K_{4}^{3})$ have been obtained by many researchers (see e.g.,~\cite{Caen88,CL99,Razborov10,BT11}), with the current record established by the computer-assisted flag algebra framework of Razborov~\cite{Raz07}.

%%%%%%%%%%%%%%%%%%%%%%
%%%%%%%%%%%%%%%%%%%%%%%%%%%%%%%%%%%
\begin{figure}[H]
\centering

\tikzset{every picture/.style={line width=1pt}} %set default line width to 0.75pt        

\begin{tikzpicture}[x=0.75pt,y=0.75pt,yscale=-1,xscale=1,line join=round, scale=0.8]
%uncomment if require: \path (0,300); %set diagram left start at 0, and has height of 300

%Shape: Circle [id:dp6695636936475329] 
\draw   (152,80.5) .. controls (152,58.13) and (170.13,40) .. (192.5,40) .. controls (214.87,40) and (233,58.13) .. (233,80.5) .. controls (233,102.87) and (214.87,121) .. (192.5,121) .. controls (170.13,121) and (152,102.87) .. (152,80.5) -- cycle ;
%Shape: Circle [id:dp9244412323097747] 
\draw   (88.25,186.5) .. controls (88.25,164.13) and (106.38,146) .. (128.75,146) .. controls (151.12,146) and (169.25,164.13) .. (169.25,186.5) .. controls (169.25,208.87) and (151.12,227) .. (128.75,227) .. controls (106.38,227) and (88.25,208.87) .. (88.25,186.5) -- cycle ;
%Shape: Circle [id:dp22305552272443063] 
\draw   (215.75,186.5) .. controls (215.75,164.13) and (233.88,146) .. (256.25,146) .. controls (278.62,146) and (296.75,164.13) .. (296.75,186.5) .. controls (296.75,208.87) and (278.62,227) .. (256.25,227) .. controls (233.88,227) and (215.75,208.87) .. (215.75,186.5) -- cycle ;
%Shape: Triangle [id:dp6693713937522238] 
\draw[fill=uuuuuu, fill opacity=0.3]   (191.5,101) -- (237.5,177) -- (145.5,177) -- cycle ;
\draw [fill=uuuuuu] (191.5,101) circle (1.2pt);
\draw [fill=uuuuuu]  (237.5,177) circle (1.2pt);
\draw [fill=uuuuuu] (145.5,177) circle (1.2pt);
%
%Curve Lines [id:da5080451307537702] 
\draw[fill=uuuuuu, fill opacity=0.3]    (217.5,67) .. controls (219.5,99) and (246.5,145) .. (266.5,184) ..  controls  (238.5,138) and (216.5,98) .. (193.5,85) .. controls (206.5,85) and (215.5,80) .. (217.5,67);
\draw [fill=uuuuuu] (217.5,67) circle (1.2pt);
\draw [fill=uuuuuu]  (266.5,184) circle (1.2pt);
\draw [fill=uuuuuu] (193.5,85) circle (1.2pt);
%
%Curve Lines [id:da6310214282009448] 
\draw[fill=uuuuuu, fill opacity=0.3]    (253.64,213.54) .. controls (224.11,201.05) and (170.9,204.69) .. (127.07,205.18) .. controls (180.73,200.65) and (226.35,198.81) .. (248.24,184.03) .. controls (242.44,195.66) and (242.9,205.95) .. (253.64,213.54) ;
\draw [fill=uuuuuu] (253.64,213.54) circle (1.2pt);
\draw [fill=uuuuuu]  (127.07,205.18) circle (1.2pt);
\draw [fill=uuuuuu] (248.24,184.03) circle (1.2pt);
%
%Curve Lines [id:da25111028296604054] 
\draw[fill=uuuuuu, fill opacity=0.3]    (101.86,179.47) .. controls (128.25,161.25) and (153.73,114.39) .. (176.82,77.14) .. controls (151.86,124.86) and (128.94,164.33) ..   (129.66,190.74) .. controls (122.96,179.61) and (114.03,174.48) .. (101.86,179.47) ;
\draw [fill=uuuuuu] (101.86,179.47) circle (1.2pt);
\draw [fill=uuuuuu]  (176.82,77.14) circle (1.2pt);
\draw [fill=uuuuuu] (129.66,190.74) circle (1.2pt);
%
%Shape: Ellipse [id:dp18696703857526065] 
\draw   (388,133) .. controls (388,88.82) and (403.67,53) .. (423,53) .. controls (442.33,53) and (458,88.82) .. (458,133) .. controls (458,177.18) and (442.33,213) .. (423,213) .. controls (403.67,213) and (388,177.18) .. (388,133) -- cycle ;
%Shape: Ellipse [id:dp1391483038424406] 
\draw   (477,133) .. controls (477,88.82) and (492.67,53) .. (512,53) .. controls (531.33,53) and (547,88.82) .. (547,133) .. controls (547,177.18) and (531.33,213) .. (512,213) .. controls (492.67,213) and (477,177.18) .. (477,133) -- cycle ;
%Curve Lines [id:da46924754073826347] 
\draw [fill=uuuuuu, fill opacity=0.3]   (417.5,97.86) .. controls (441.01,110.72) and (483.64,107.75) .. (518.72,107.8) .. controls (475.72,111.66) and (439.2,112.93)  ..  (421.53,127.43) .. controls (426.28,115.87) and (426.02,105.58) .. (417.5,97.86) ;
\draw [fill=uuuuuu] (417.5,97.86) circle (1.2pt);
\draw [fill=uuuuuu]  (518.72,107.8) circle (1.2pt);
\draw [fill=uuuuuu] (421.53,127.43) circle (1.2pt);
%
%Curve Lines [id:da11084807087903292] 
\draw [fill=uuuuuu, fill opacity=0.3]   (521.28,169.35) .. controls (497.26,157.46) and (454.8,162.18) .. (419.74,163.56)  .. controls (462.54,157.94) and (498.98,155.18) .. (516.05,139.96) .. controls (511.76,151.71) and (512.45,161.98) .. (521.28,169.35) ;
\draw [fill=uuuuuu] (521.28,169.35) circle (1.2pt);
\draw [fill=uuuuuu]  (419.74,163.56) circle (1.2pt);
\draw [fill=uuuuuu] (516.05,139.96) circle (1.2pt);
%
% Text Node
\draw (182,15) node [anchor=north west][inner sep=0.75pt]   [align=left] {$V_1$};
% Text Node
\draw (250,233) node [anchor=north west][inner sep=0.75pt]   [align=left] {$V_2$};
% Text Node
\draw (115,233) node [anchor=north west][inner sep=0.75pt]   [align=left] {$V_3$};
% Text Node
\draw (415,233) node [anchor=north west][inner sep=0.75pt]   [align=left] {$V_1$};
% Text Node
\draw (508,233) node [anchor=north west][inner sep=0.75pt]   [align=left] {$V_2$};
\end{tikzpicture}

\caption{Structures of $\mathbb{C}[V_1, V_2, V_3]$ and $\mathbb{B}[V_1, V_2]$.}
\label{fig:CB}
\end{figure}
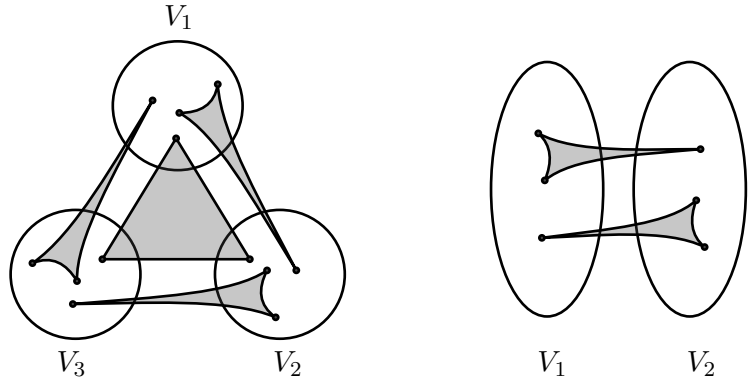
%%%%%%%%%%%%%%%%%%%%%%%%%%%%%%%%%%%
%%%%%%%%%%%%%%%%%%%%%%

% Recently, Balogh--Clemen--Lidick\'{y}~\cite{BCL22b} studied the Tetrahedron Conjecture in the $\ell_{2}$-norm. 
Given an $r$-graph $\mathcal{H}$ on a vertex set $V$, the \emph{codegree} of an $(r-1)$-subset $S \subseteq V$ in $\mathcal{H}$, denoted by $d_{\mathcal{H}}(S)$, is the number of edges in $\mathcal{H}$ containing $S$. 
An simple double-counting argument shows that $\sum_{e\in \binom{V}{r-1}} d_{\mathcal{H}}(e) = r|\mathcal{H}|$. 
Balogh--Clemen--Lidick\'{y} viewed the summation on the left side of the equality as the $\ell_{1}$-norm of $\mathcal{H}$, and they further introduced the $\ell_{2}$-norm of $\mathcal{H}$, given by 
\begin{align*}
    \norm{\mathcal{H}}_{2}
    \coloneqq \sum_{e\in \binom{V}{r-1}} d_{\mathcal{H}}^{2}(e). 
\end{align*}
The Tur\'{a}n number of $\mathcal{F}$ in the $\ell_{2}$-norm is then defined as 
\begin{align*}
    \mathrm{ex}_{\ell_{2}}(n,\mathcal{F})
    \coloneqq \max\left\{\norm{\mathcal{H}}_{2} \colon \text{$v(\mathcal{H}) = n$ and $\mathcal{H}$ is $\mathcal{F}$-free} \right\}. 
\end{align*}
The Tur\'{a}n density of $\mathcal{F}$ in the $\ell_{2}$-norm is given by 
\begin{align*}
    \pi_{\ell_{2}}(\mathcal{F})
    \coloneqq \lim_{n\to \infty} \frac{\mathrm{ex}_{\ell_{2}}(n,\mathcal{F})}{\binom{n}{r-1} (n-r+1)^2}. 
\end{align*}
Tur\'{a}n problems under the $\ell_{2}$-norm were systematically studied by Balogh--Clemen--Lidick\'{y} in~\cite{BCL22a}, and later extended to the more general $(t,p)$-norms as well as to degenerate hypergraphs in~\cite{CILLP24,GLMP24}.

The construction $\mathbb{C}_{n}$ shows that $\pi_{\ell_{2}}(K_{4}^{3}) \ge \frac{1}{3}$. 
Using Razborov's flag algebra framework, Balogh--Clemen--Lidick\'{y}~\cite{BCL22b} proved, with computer assistance, that $\pi_{\ell_{2}}(K_{4}^{3}) \le \frac{1}{3}$ and thus established the following theorem. 

\begin{theorem}[{\cite[Theorem~1.2]{BCL22b}}]\label{THM:BCL22-L2-density}
    We have $\mathrm{ex}_{\ell_{2}}(n, K_{4}^{3}) = \frac{n^4}{6} + o(n^4)$. 
\end{theorem}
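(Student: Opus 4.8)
The plan is to prove the lower bound $\mathrm{ex}_{\ell_2}(n,K_4^3)\ge \tfrac{n^4}{6}-o(n^4)$ by a direct computation with the construction, and the matching upper bound $\pi_{\ell_2}(K_4^3)\le\tfrac13$ by the flag-algebra semidefinite method; the upper bound is the substantial part.

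For the lower bound I would evaluate $\norm{\mathbb{C}_n}_{2}$ directly. Take $V_1,V_2,V_3$ of sizes $m=n/3+O(1)$ and classify a pair $\{x,y\}\subseteq[n]$ by how it meets the parts. A monochromatic pair (both endpoints in one $V_i$) lies in exactly one of the heavy edge-types $V_iV_iV_j$, so its codegree is $m+O(1)$; a bichromatic pair (endpoints in $V_i$, $V_j$ with $i\ne j$) gets one contribution from the cyclic type $V_1V_2V_3$ and one from a heavy type, so its codegree is $2m+O(1)$. Summing squared codegrees over the $3\binom m2\sim\tfrac32 m^2$ monochromatic pairs and the $3m^2$ bichromatic pairs gives $\norm{\mathbb{C}_n}_{2}=\bigl(\tfrac32+12\bigr)m^4+o(n^4)=\tfrac{27}{2}m^4+o(n^4)=\tfrac{n^4}{6}+o(n^4)$. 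Since $\mathbb{C}_n$ is $K_4^3$-free --- a quick case check on the distribution of four vertices among the three parts shows any four vertices span at most three edges --- this yields the claimed lower bound.

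For the upper bound I would recast $\norm{\mathcal H}_{2}$ as a flag-algebra quantity. Fixing an ordered pair $(u,v)$ and picking two further distinct vertices $w,w'$ uniformly at random, the probability that both $uvw$ and $uvw'$ are edges is $\tfrac{d_{\mathcal H}(\{u,v\})(d_{\mathcal H}(\{u,v\})-1)}{(n-2)(n-3)}$; averaging over $(u,v)$ and using $|\mathcal H|=O(n^3)$ shows that $\norm{\mathcal H}_{2}/\bigl(\binom{n}{2}(n-2)^2\bigr)=t(\mathcal H)+o(1)$, where $t(\mathcal H)$ is the labelled density of the $2$-rooted configuration ``edge on the two roots plus an extra vertex, with a second such extra vertex''. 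Equivalently, in the limit algebra of $K_4^3$-free $3$-graphs, $\pi_{\ell_2}(K_4^3)$ is the supremum of the element obtained by squaring the $\sigma$-flag $F$ --- two roots $u,v$, one further vertex $w$, the unique edge $uvw$, where $\sigma$ is the type on the two roots --- and then unlabelling. This is precisely the kind of (sum-of-squares--friendly) expression the flag-algebra method bounds from above: one fixes a flag size $N$, enumerates all $K_4^3$-free $3$-graphs on $N$ vertices together with the relevant types $\tau$ and $\tau$-flags, and searches for positive semidefinite matrices $Q_\tau$ such that $\tfrac13-F^2-\sum_\tau(\text{unlabelling of }\mathbf{f}_\tau^{\top}Q_\tau\mathbf{f}_\tau)$ has nonnegative coefficients when written out over all $N$-vertex $K_4^3$-free $3$-graphs. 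Such $Q_\tau$ are found by solving the resulting semidefinite program numerically and then rounding to exact rationals.

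The main obstacle is entirely on the upper-bound side: making this SDP actually close at the value $\tfrac13$. This requires a fairly large flag size --- Razborov's treatment of the edge version of the tetrahedron problem already needs $N=7$, and nothing smaller is expected to work here --- so the program has thousands of flags, and the numerical optimum must then be converted into an exact, machine-verifiable certificate. The rounding is the delicate step, because the ordinary Tur\'an problem for $K_4^3$ has no stability: beyond $\mathbb{C}$ there are pairwise-far extremal constructions (Brown, Kostochka, Fon-Der-Flaass, Frohmader), so one must expect potentially several (near-)extremal configurations for the $\ell_2$-problem as well, each of which must sit in the common kernel of the blocks $Q_\tau$, and the rational rounding has to respect all of these tightness conditions simultaneously. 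In short, the proof is genuinely computer-assisted, and engineering a verifiable exact SDP certificate is where the work lies.
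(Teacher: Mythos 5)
Your proposal matches the paper's treatment of this statement: the lower bound is exactly the computation $\norm{\mathbb{C}_{n}}_{2} = \tfrac{n^4}{6}+o(n^4)$ for the $K_4^3$-free construction $\mathbb{C}_n$ (consistent with Fact~\ref{FACT:C123-2norm}), and the upper bound $\pi_{\ell_2}(K_4^3)\le \tfrac13$ is the computer-assisted flag-algebra SDP certificate, which the paper does not reprove but cites from Balogh--Clemen--Lidick\'y~\cite{BCL22b}, using the same reformulation of $\norm{\mathcal H}_2$ via the density of $\mathbb{S}_2$ (two edges on four vertices) that you describe. So the approach is essentially identical, with the only caveat that, like the paper, you defer the actual exact certificate to the computer computation.
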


They further conjectured (see {\cite[Conjecture~1.3]{BCL22b}}) that for sufficiently large $n$, the construction $\mathbb{C}_{n}$ is the unique extremal configuration for the Tur\'{a}n problem of $K_{4}^{3}$ in the $\ell_{2}$-norm,  in contrast to the $\ell_{1}$-norm setting, where exponentially many extremal constructions may exist. Our main result confirms this conjecture.

\begin{theorem}\label{THM:L2-exact-K43}
    There exists $n_{0}$ such that the following holds for every $n \ge n_{0}$.
    Suppose that $\mathcal{H}$ is an $n$-vertex $K_{4}^{3}$-free $3$-graph. 
    Then $\norm{\mathcal{H}}_{2} \le \norm{\mathbb{C}_{n}}_{2}$, and equality holds iff $\mathcal{H} \cong \mathbb{C}_{n}$. 
\end{theorem}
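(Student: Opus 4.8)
Our proof follows the Simonovits stability paradigm, with a new mechanism in the exact step. The plan is to: (i) prove a \emph{stability} statement---every $K_{4}^{3}$-free $n$-vertex $3$-graph $\mathcal{H}$ with $\norm{\mathcal{H}}_{2} \ge \norm{\mathbb{C}_{n}}_{2} - o(n^{4})$ differs from some $\mathbb{C}[U_1,U_2,U_3]$ in only $o(n^{3})$ triples---and then (ii) bootstrap this to the exact bound and uniqueness via a local-modification procedure. The two ingredients advertised in the abstract slot in as follows: the Mantel theorem for vertex-colored graphs drives (i), and the local-modification procedure is the engine of (ii).

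For (i) one cannot simply combine Theorem~\ref{THM:BCL22-L2-density} with off-the-shelf removal/stability arguments, because the $\ell_{1}$-Tur\'an problem for $K_{4}^{3}$ has no stability (it has many extremal configurations); the structure must be extracted from the $\ell_{2}$-functional itself. The starting point is the identity $\norm{\mathcal{H}}_{2} = \sum_{v}\sum_{xy \in L_{v}} d_{\mathcal{H}}(xy)$, where $L_{v}$ is the link graph of $v$, which exhibits $\norm{\mathcal{H}}_{2}$, up to lower-order terms, as a \emph{weighted} count of edges over the links. Since $\mathcal{H}$ is $K_{4}^{3}$-free, every triangle of $L_{v}$ is a non-edge of $\mathcal{H}$; once an approximate $3$-partition is imposed and the edge weights are replaced by the corresponding codegrees of the candidate construction, estimating $\sum_{xy \in L_{v}} d_{\mathcal{H}}(xy)$ becomes a Mantel-type extremal problem for vertex-colored graphs forbidding triangles of prescribed colour patterns. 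Proving this auxiliary Mantel theorem, in a robust (stability) form, identifies the link configurations of $\mathbb{C}$ as the essentially unique near-optimizers; feeding this back shows that $\mathcal{H}$ is $o(n^{3})$-close to $\mathbb{C}[U_1,U_2,U_3]$ with each $|U_i| = n/3 + o(n)$.

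For (ii) take $\mathcal{H}$ with $\norm{\mathcal{H}}_{2} = \mathrm{ex}_{\ell_{2}}(n,K_{4}^{3})$; by (i) it is $o(n^{3})$-close to some $\mathbb{C}[U_1,U_2,U_3]$ with nearly balanced parts. First we \emph{clean up}: all but $o(n)$ vertices have link within $o(n^{2})$ of the ideal link of their part, and each exceptional vertex $v$ is handled by deleting all triples through $v$ (plus the few offending triples elsewhere needed to restore $K_{4}^{3}$-freeness) and reinserting the ideal link of whichever part fits $v$ best, a move one checks does not decrease $\norm{\mathcal{H}}_{2}$. After cleaning, every vertex is ``correctly coloured'', and it remains to force $\mathcal{H} = \mathbb{C}[U_1,U_2,U_3]$ with $|U_1| = |U_2| = |U_3|$. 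This is the heart of the procedure: rather than optimizing $\mathcal{H}$ globally, we show that \emph{every} individual deviation from $\mathbb{C}[U_1,U_2,U_3]$---a missing triple of intended type, an extra triple of a forbidden type, or an imbalance $|U_i| \ne |U_j|$---admits a \emph{local modification} (flipping $O(1)$ triples, or shifting a single vertex between parts) that preserves $K_{4}^{3}$-freeness and \emph{strictly} increases $\norm{\cdot}_{2}$. Each such comparison reduces to an explicit quadratic inequality in the codegree profile of a $3$-graph already within $o(n^{3})$ of $\mathbb{C}_{n}$, and strictness of all these inequalities yields both exactness and uniqueness; in particular the part-size case amounts to checking that $f(a,b,c) = \tfrac12\big(a^{2}b^{2}+b^{2}c^{2}+c^{2}a^{2}\big) + ab(a+c)^{2}+bc(a+b)^{2}+ca(b+c)^{2}$, the main term of $\norm{\mathbb{C}[U_1,U_2,U_3]}_{2}$, is uniquely maximized on $\{a+b+c=n\}$ at $a=b=c$.

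The principal obstacle is twofold. First, because $\ell_{1}$-stability genuinely fails for $K_{4}^{3}$, obtaining any stability requires the detour through the vertex-colored Mantel problem, and proving that theorem in a stability form---not merely computing its extremal value---is the technical crux of step (i). Second, the exact step hinges on the claim that \emph{no} local deviation is neutral or beneficial: for each kind of deviation (missing or extra triples of each admissible type, vertices sitting near the interface of two parts, and part-size shifts) one must simultaneously bound the change in $\norm{\cdot}_{2}$ below by a positive amount and verify that the modification creates no copy of $K_{4}^{3}$. I expect the bookkeeping that pairs every possible deviation with a norm-increasing, $K_{4}^{3}$-free local move---especially for ``boundary'' vertices whose links straddle two parts---to be the most delicate part of the argument.
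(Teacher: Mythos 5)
Your overall frame (stability plus a local-modification exact step) matches the paper, but two of your load-bearing claims do not hold as stated. First, your step (i) is circular as sketched: to turn $\sum_{xy\in L_v}d_{\mathcal{H}}(xy)$ into a vertex-colored Mantel problem you must already have an approximate $3$-partition in hand, yet producing that partition is exactly what the stability statement is supposed to deliver. The paper does not re-derive stability this way; it imports the $\ell_2$-stability theorem for $K_4^3$ directly from Balogh--Clemen--Lidick\'y (Theorem~\ref{THM:K43-L2-stability}, proved there by flag algebras), and uses the vertex-colored Mantel results (Theorems~\ref{THM:3-colored-Mantel-L1-norm}, \ref{THM:3-colored-Mantel-stability}, \ref{THM:3-colored-Mantel-L2-norm}) only afterwards, applied to the links of an extremal $\mathcal{H}$ relative to the partition furnished by stability, in order to bound $\Delta(\mathcal{B})$ and $\Delta(\mathcal{M})$ (Claim~\ref{CLAIM:max-deg-B}). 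Without either citing that stability theorem or supplying a genuinely new proof of it, your step (i) has a gap.

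Second, and more importantly, your exact step insists that every deviation be repairable by an $O(1)$-triple flip that \emph{preserves $K_4^3$-freeness} and strictly increases $\norm{\cdot}_2$. Neither requirement is tenable: deleting a single bad triple always strictly decreases the $\ell_2$-norm (each of its three pairs loses one codegree), so a strict increase forces you to add missing triples, and adding triples of $\mathbb{C}[V_1,V_2,V_3]$ can create tetrahedra with bad edges elsewhere in $\mathcal{H}$; there is no reason an $O(1)$ local patch exists, and verifying $K_4^3$-freeness of each intermediate graph is precisely the bookkeeping you flag as delicate. The paper's key idea is to abandon both requirements: it performs $\Theta(n)$-sized swaps (replace \emph{all} bad triples through a fixed shadow pair $e_{\ast}$ by \emph{all} missing triples through $e_{\ast}$), proves via codegree counting that each swap strictly increases the norm (Lemmas~\ref{LEMMA:K43-L2-improvement-phase-one} and~\ref{LEMMA:K43-L2-improvement-phase-two}, which explicitly do not assume $K_4^3$-freeness), and never needs the intermediate graphs to be $K_4^3$-free --- the contradiction comes only from the final inclusion $\tilde{\mathcal{H}}_m\subseteq\mathbb{C}[V_1,V_2,V_3]$ together with $\norm{\mathbb{C}[V_1,V_2,V_3]}_2\le\norm{\mathbb{C}_n}_2$ and the extremality of $\mathcal{H}$. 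Your balancedness computation for $\norm{\mathbb{C}[U_1,U_2,U_3]}_2$ is consistent with Fact~\ref{FACT:C123-2norm}, but as written the core of your step (ii) would fail, and the cleanup move for exceptional vertices is likewise unjustified on both the freeness and the norm-monotonicity counts.
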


% In this next section, we present several preliminary results. 
In Section~\ref{SEC:vtx-colored-Turan}, we prove Tur\'{a}n-type results on vertex-colored graphs without certain types of triangles, in both the $\ell_1$-norm (Theorem~\ref{THM:3-colored-Mantel-L1-norm}) and the $\ell_{2}$-norm (Theorem~\ref{THM:3-colored-Mantel-L2-norm}), together with a corresponding stability theorem (Theorem~\ref{THM:3-colored-Mantel-stability}). 
Although the $\ell_{2}$-norm result can in principle be proved without computer-assisted flag algebra computations, the argument we found requires extremely lengthy case analyses; therefore, we include a flag algebra proof here. 
In Section~\ref{SEC:proof-THM-L2-exact-K43}, we present the proof of Theorem~\ref{THM:L2-exact-K43}. 
The proof uses Simonovits' stability method~\cite{Sim68}, combined with a novel procedure that reduces the problem to the much simpler task of verifying that the $\ell_{2}$-norm of certain near-extremal constructions increases under specific local changes (see Lemmas~\ref{LEMMA:K43-L2-improvement-phase-one} and~\ref{LEMMA:K43-L2-improvement-phase-two}). We believe that this framework applies to other extremal problems as well. 
In Section~\ref{SEC:Remarks}, we include some concluding remarks.

%%%%%%%%%%%%%%%%%%%%%%%%%%%%%%%%%%%%%%%%%%%%%%%%%%%%%%%%%%
\section{Mantel's theorem in vertex-colored graphs}\label{SEC:vtx-colored-Turan}
In this section, we consider a variation of Mantel’s theorem in vertex-colored graphs. 

Let $G$ be a graph on vertex set $V$, and let $V \coloneqq V_1 \cup V_2 \cup V_3$ be a partition of $V$ (equivalently, a $3$-coloring of the vertices of $G$).
For a triple of vertices $\{u,v,w \} \subseteq V$, we say that $G[\{u,v,w\}]$ is \emph{a triangle of type $V_iV_jV_k$} for some $(i,j,k) \in [3]^3$ if $\{u,v,w\}$ induces a copy of $K_3$ in $G$ and $(u,v,w) \in V_i \times V_j \times V_{k}$.
Denote by $\rho_{3}(G)$ the density of triangles of types $V_1V_2V_3$, $V_1V_1V_2$, $V_2V_2V_3$, and $V_3V_3V_1$ in $G$, that is, the probability that a uniformly chosen $3$-subset of $V$ induces a triangle of one of these types. We say that $G$ is \emph{cyclically triangle-free} if $\rho_{3}(G) = 0$. 

For a vertex set $S \subseteq V$, we use $G[S]$ to denote the \emph{induced subgraph} of $G$ on $S$. 
For pairwise disjoint sets $S_1, \ldots, S_{k} \subseteq V$, we use $G[S_1, \ldots, S_k]$ to denote the \emph{induced $k$-partite subgraph} of $G$ with parts $S_1, \ldots, S_k$. 

Let $\Lambda[V_1, V_2, V_3]$ denote the graph on vertex set $V$ whose edge set is 
\begin{align*}
    \left\{e \in \tbinom{V}{2} \colon (|e\cap V_1|, |e \cap V_2|, |e \cap V_{3}|) \in \{(1,1,0), (0,1,1), (0,0,2)\} \right\}. 
\end{align*}
Observe that $\Lambda[V_1, V_2, V_3]$ is cyclically triangle-free.
Suppose that $|V_1| = |V_2| = |V_3| = n$. Then straightforward calculations show that 
\begin{align*}
    |\Lambda[V_1, V_2, V_3]|
    & = 2n^2 + \binom{n}{2}
    = \frac{5}{2} n^2 - \frac{n}{2}, \\
    \norm{\Lambda[V_1, V_2, V_3]}_{2}
    & = n\cdot n^2 + n \cdot (2n)^2 + n \cdot (2n-1)^2 
    = 9n^3 - 4n^2 + n. 
\end{align*}

Throughout this section, we assume that $G$ is a graph with vertex set $V$, and that $V = V_1 \cup V_2 \cup V_3$ is a fixed partition. We further assume that all indices are taken modulo $3$.

The main results of this section are as follows.
\begin{theorem}\label{THM:3-colored-Mantel-L1-norm}
    For every $\varepsilon > 0$, there exist $\delta_{\ref{THM:3-colored-Mantel-L1-norm}} = \delta_{\ref{THM:3-colored-Mantel-L1-norm}}(\varepsilon) > 0$ and $N_{\ref{THM:3-colored-Mantel-L1-norm}} = N_{\ref{THM:3-colored-Mantel-L1-norm}}(\varepsilon)$ such that the following holds for $n \ge N_{\ref{THM:3-colored-Mantel-L1-norm}}$. 
    Suppose that $\max_{i \in [3]}\left| |V_i| - n \right| \le \delta_{\ref{THM:3-colored-Mantel-L1-norm}} n$ and $\rho_{3}(G) \le \delta_{\ref{THM:3-colored-Mantel-L1-norm}}$. 
    Then 
    \begin{align*}
        |G| \le \frac{5}{2} n^2 + \varepsilon n^2.
    \end{align*}
\end{theorem}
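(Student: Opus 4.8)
The plan is to run a standard stability-type argument: since $\rho_3(G)$ is tiny, $G$ is close (in edit distance) to a graph containing no "cyclic" triangle at all, and then to analyze the structure of cyclically triangle-free colored graphs directly. So first I would invoke a removal-type statement: by the (triangle) removal lemma applied to the auxiliary $3$-partite structure, from $\rho_3(G) \le \delta$ we may delete at most $\eta n^2$ edges from $G$ (where $\eta \to 0$ as $\delta \to 0$) to obtain a subgraph $G'$ on the same vertex set with $\rho_3(G') = 0$, i.e. $G'$ is cyclically triangle-free. Since $|G| \le |G'| + \eta n^2$, it suffices to show $|G'| \le \tfrac52 n^2 + \tfrac{\varepsilon}{2} n^2$ for any cyclically triangle-free $G'$ on parts of sizes within $\delta n$ of $n$.

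Next I would bound $|G'|$ by splitting its edges according to which pair of color classes they meet. Write $e_{ij}$ for the number of edges of $G'$ with one endpoint in $V_i$ and one in $V_j$, and $e_{ii}$ for the number inside $V_i$. The forbidden triangle types are exactly $V_1V_2V_3$, $V_1V_1V_2$, $V_2V_2V_3$, $V_3V_3V_1$; by cyclic symmetry of the indices these say: (a) the tripartite graph $G'[V_1,V_2,V_3]$ has no "rainbow" triangle; (b) for each $i$, there is no triangle with two vertices in $V_i$ and one in $V_{i+1}$ — equivalently, for every edge $\{u,v\}$ inside $V_i$, the common neighborhood of $u,v$ in $V_{i+1}$ is empty. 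Condition (b) is the key constraint on the inside edges: for $i=1$ it forces that no edge of $G'[V_1]$ can be "completed" into $V_2$, and similarly for $i=2$ into $V_3$, and $i=3$ into $V_1$. I expect the extremal behavior to be: one color class, say $V_3$, carries a near-complete graph internally (contributing $\binom{n}{2}$), and the bipartite pairs $V_1V_2$ and $V_2V_3$ are near-complete (contributing $2n^2$), while $V_1V_3$ and the internal graphs on $V_1,V_2$ are sparse, matching $\Lambda[V_1,V_2,V_3]$.

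To make this rigorous I would use a weight/LP or a direct counting argument on the reduced graph obtained from a regularity partition of $G'$: form the cluster graph, where a super-edge between clusters records density $\ge$ some threshold; cyclic triangle-freeness transfers (up to $o(n^2)$ edges) to the cluster graph, so it suffices to bound the edge count of a cyclically triangle-free colored cluster graph, which is a finite combinatorial optimization. There one argues: the bipartite density-pairs $V_1V_2$, $V_2V_3$, $V_3V_1$ plus the internal graphs on $V_1,V_2,V_3$ cannot all be dense; quantitatively, maximizing $\sum_{i<j} e_{ij} + \sum_i e_{ii}$ subject to the no-cyclic-triangle constraints yields the value $2n^2 + \binom n2$, attained (in the limit) only near $\Lambda$. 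The arithmetic is routine once the constraints are written as: if $G'[V_i]$ has positive density and $G'[V_i,V_{i+1}]$ has positive density then (roughly) these densities multiply to give a forbidden triangle, so one of them must be $o(1)$; combining the three such dichotomies cyclically leaves exactly the $\Lambda$-profile as the optimum.

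The main obstacle is the last optimization step: one must carefully handle the interaction between the internal edges and the two incident bipartite pairs for each class simultaneously, and show that trying to gain edges in, say, $G'[V_1]$ forces a loss of at least as many edges across $V_1V_2$ (and symmetrically around the cycle), so that no configuration beats $\Lambda$. I would organize this by case analysis on which of $e_{11}, e_{22}, e_{33}$ are "large," using condition (b) to show that a large $e_{ii}$ collapses $e_{i,i+1}$, and condition (a) (no rainbow triangle in the tripartite part) to cap $e_{12}+e_{23}+e_{31}$ away from $3n^2$ whenever all three are simultaneously large; the bookkeeping, rather than any single idea, is what makes it delicate, but the slack $\varepsilon n^2$ comfortably absorbs all $o(n^2)$ regularity errors.
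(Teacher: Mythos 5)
Your opening reduction coincides with the paper's: apply the Removal Lemma to pass from $\rho_3(G)\le\delta$ to a cyclically triangle-free subgraph $G'$, so that it suffices to bound $|G'|$ when $\rho_3(G')=0$ (this is exactly the paper's reduction to its Theorem~\ref{THM:3-colored-Mantel-L1-norm-reduced}). The gap lies in the core extremal step. You propose to pass to a cluster graph and then solve ``a finite combinatorial optimization'' over the six edge counts $e_{ij}$, driven by the dichotomy that if $G'[V_i]$ and $G'[V_i,V_{i+1}]$ both have positive density then their densities ``multiply to give a forbidden triangle,'' so that a large $e_{ii}$ collapses $e_{i,i+1}$. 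That dichotomy is false. Split $V_1=A\cup B$ into halves, place the complete bipartite graph between $A$ and $B$ inside $V_1$ (so $e_{11}\approx n^2/4$), and join $A$ completely to $V_2$ (so $e_{12}\approx n^2/2$): no triangle of type $V_1V_1V_2$ arises, because every internal edge of $V_1$ has its $B$-endpoint nonadjacent to $V_2$. Hence the feasible region of the vector $(e_{11},e_{22},e_{33},e_{12},e_{23},e_{31})$ is not cut out by any such pairwise (or linear) constraints, and determining which vectors are achievable by cyclically triangle-free colored graphs is essentially the original extremal problem; the regularity/cluster step only restates it in weighted form. The $\varepsilon n^2$ slack cannot absorb this, since the configurations your dichotomy wrongly excludes differ from the $\Lambda$-profile by $\Theta(n^2)$, not by a regularity error.

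This is precisely the difficulty the paper's proof is built to handle: it applies a Zykov-type symmetrization to equivalence classes (so that within each $V_i$ nonadjacent vertices have identical neighborhoods and the ``mixed'' configurations above get ironed out), orients the cross edges cyclically, and proves a degree-sum lemma along distinct directed paths (Lemma~\ref{LEMMA:deg-sum-distinct-walk}); a minimum-degree-sum argument combined with the analysis of a longest directed path then yields $|G'|\le \tfrac52 n^2+5n$ exactly. If you want to keep your route, you would need to replace the false dichotomy by an argument valid for structured (non-quasirandom) graphs --- for instance a genuine optimization over the cluster graph viewed as a colored graph with the same forbidden triangle types, solved by symmetrization or by a weighted stability analysis --- at which point you are redoing the paper's main lemma rather than avoiding it.
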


We will also use the following stability theorem. 
\begin{theorem}\label{THM:3-colored-Mantel-stability}
    For every $\varepsilon > 0$, there exist $\delta_{\ref{THM:3-colored-Mantel-stability}} = \delta_{\ref{THM:3-colored-Mantel-stability}}(\varepsilon) > 0$ and $N_{\ref{THM:3-colored-Mantel-stability}}$ such that the following holds for $n \ge N_{\ref{THM:3-colored-Mantel-stability}}$. 
    Suppose that $\max_{i \in [3]}\left| |V_i| - n \right| \le \delta_{\ref{THM:3-colored-Mantel-L2-norm}} n$, $\rho_{3}(G) \le \delta_{\ref{THM:3-colored-Mantel-stability}}$, and 
    \begin{align*}
        |G| + |G[V_1, V_2, V_3]| 
        \ge \frac{9 n^2}{2} - \delta_{\ref{THM:3-colored-Mantel-stability}} n^2. 
    \end{align*}
    Then, there exists $i \in [3]$ such that 
    \begin{align*}
        |G[V_{i}]| + |G[V_{i+1}]| + |G[V_{i}, V_{i+2}]| 
        \le \varepsilon n^2. 
    \end{align*}
\end{theorem}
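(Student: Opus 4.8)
The plan is to run a standard Simonovits-style stability argument on top of the $\ell_1$-bound (Theorem~\ref{THM:3-colored-Mantel-L1-norm}), exploiting the fact that the quantity $|G| + |G[V_1,V_2,V_3]|$ double-counts precisely the edges of $G$ that lie inside $\Lambda[V_1,V_2,V_3]$-type slots, i.e.\ the "cheap" edges one is allowed to use, while penalizing the "forbidden" $\Lambda$-slots. More concretely, observe that
\begin{align*}
    |G| + |G[V_1,V_2,V_3]|
    = 2\sum_{i\in[3]}\big(|G[V_i,V_{i+1}]| + \tfrac12|G[V_{i+1}]|\big) \;-\; \sum_{i\in[3]}\big(|G[V_i,V_{i+2}]| + |G[V_i]|\big)\cdot(\text{corrections}),
\end{align*}
so that the hypothesis $|G| + |G[V_1,V_2,V_3]| \ge \tfrac92 n^2 - \delta n^2$ forces $G$ to be very close to a subgraph of $\Lambda[V_1,V_2,V_3]$ with all slots nearly saturated. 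First I would make this precise: writing $a_i \coloneqq |G[V_i,V_{i+1}]|$, $b_i \coloneqq |G[V_i,V_{i+2}]|$, $c_i \coloneqq |G[V_i]|$, the edge count is $|G| = \sum (a_i + b_i + c_i)$, and $|G[V_1,V_2,V_3]| = a_1 - b_? + \cdots$ (the crossing edges among the three parts, which split into the "good" direction and the "bad" direction); combining the two and using $a_i \le |V_i||V_{i+1}| \approx n^2$, $c_i \le \binom{|V_i|}2 \approx n^2/2$ shows that each good slot $a_i$ and each $c_{i+1}$-type slot is within $O(\delta)n^2$ of its maximum, and $\sum_i (b_i + \text{one of the } c\text{'s}) = O(\delta)n^2$.

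Next I would invoke Theorem~\ref{THM:3-colored-Mantel-L1-norm}: since $\rho_3(G)\le\delta$, we have $|G|\le \tfrac52 n^2 + \varepsilon' n^2$ for any prescribed $\varepsilon'$ once $\delta$ is small, so the near-saturation of the good slots leaves essentially no room for the bad slots beyond an $O(\delta)n^2 + \varepsilon' n^2$ surplus. The key combinatorial step is then to rule out the "symmetric" near-optimal configuration in which all three bad slots $b_1, b_2, b_3$ (together with the complementary $c_i$'s) are simultaneously of size $\gg \varepsilon n^2/3$: here I would use that $\Lambda[V_1,V_2,V_3]$ together with any nonnegligible amount of edges in a "wrong" slot creates many cyclic triangles. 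For instance, an edge inside $V_1$ (a $(2,0,0)$ pair) together with the near-complete bipartite graphs $G[V_1,V_2]$ and $G[V_3,V_1]$ produces $\Theta(n)$ triangles of type $V_1V_1V_2$ (using common neighbours in $V_2$), contradicting $\rho_3(G)=o(1)$ unless $c_1 = o(n^2)$; similarly an edge in the $V_1V_3$ direction together with near-complete $G[V_2,V_3]$ and the $V_1V_2$ edges yields $\Theta(n)$ triangles of type $V_3V_3V_1$ or $V_1V_1V_2$. A short counting/averaging argument (a "defect" version of the triangle-removal heuristic, or just convexity on codegrees) converts $\rho_3(G)\le\delta$ into: for at least one index $i$, all of $G[V_i]$, $G[V_{i+1}]$, $G[V_i,V_{i+2}]$ are simultaneously small. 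The cyclic asymmetry of the forbidden triangle family is exactly what singles out one distinguished index $i$ rather than controlling all three at once.

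The main obstacle I anticipate is this last step — pinning down the single index $i$ and showing the other "wrong" slots can be nonempty only in a way that still concentrates all the defect at one $i$. The difficulty is that a priori the defect could be spread: e.g.\ $c_1, b_2$ moderately large but everything else tiny, and one must check this still violates $\rho_3(G)=o(1)$ or else already has the desired form after relabeling. I would handle this by a case analysis on which of the nine quantities $\{a_i, b_i, c_i\}$ are $\ge \eta n^2$ for a small threshold $\eta$ (chosen between $\delta$ and $\varepsilon$): the edge-count lower bound forces the three good $a_i$ and some rotation of the $c_i$'s to be large; any additional large slot among the remaining six, when combined with two of the already-large good slots sharing a common part, forces $\Omega(n^2)$ pairs with codegree $\Omega(n)$ of a forbidden type and hence $\rho_3(G) = \Omega(1)$. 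Tracing which pairs of large slots are "compatible" with which forbidden triangle type will show the only surviving possibility is exactly that the three large slots besides the $a_i$'s are $G[V_i], G[V_{i+1}]$ complement pattern — wait, rather, that the small slots are precisely $G[V_i], G[V_{i+1}], G[V_i,V_{i+2}]$ for a single $i$ — which is the conclusion. Finally, choosing the constants in the order $\varepsilon \to \eta \to \delta \to N$ and absorbing lower-order terms completes the proof. $\qed$
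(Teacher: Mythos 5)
Your setup already breaks down at the level of the decomposition. The quantities you call $a_i=|G[V_i,V_{i+1}]|$ and $b_i=|G[V_i,V_{i+2}]|$ enumerate the \emph{same} three undirected bipartite graphs $G[V_1,V_2]$, $G[V_2,V_3]$, $G[V_3,V_1]$; there is no ``good direction'' versus ``bad direction'' for crossing edges, since the cyclic asymmetry of the problem lives only in the forbidden triangle types. Consequently your first displayed ``identity'' (with its unspecified correction term) is not an identity, and the structural conclusion you draw from the hypothesis --- that every crossing slot and every internal slot is within $O(\delta)n^2$ of saturation --- is false: in the extremal configuration $\Lambda[V_1,V_2,V_3]$ one crossing pair is completely empty and two of the three internal parts carry no edges at all. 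What the hypothesis actually yields, via Theorem~\ref{THM:3-colored-Mantel-L1-norm} and the Bollob\'as--Erd\H{o}s--Straus bound, is only that $|G|\ge \tfrac52 n^2-o(n^2)$ and $|G[V_1,V_2,V_3]|\ge 2n^2-o(n^2)$ separately.

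The deeper gap is that your argument never controls the \emph{structure} of the tripartite crossing graph, and slot sizes alone do not suffice for the triangle-counting you propose. Near-extremal triangle-free subgraphs of $K_{n,n,n}$ include the split configurations $K[V_{11}\cup V_2,\,V_{12}\cup V_3]$ with $|V_{11}|=\alpha n$, $|V_{12}|=(1-\alpha)n$ both of linear size; adding all pairs inside $V_1$ except those inside $V_{12}$ gives a cyclically triangle-free graph with slot counts roughly $\bigl(|G[V_1,V_2]|,|G[V_1,V_3]|,|G[V_2,V_3]|,|G[V_1]|\bigr)\approx\bigl((1-\alpha)n^2,\alpha n^2,n^2,\tfrac{(2\alpha-\alpha^2)n^2}{2}\bigr)$ and no index $i$ for which $G[V_i]$, $G[V_{i+1}]$, $G[V_i,V_{i+2}]$ are all small. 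Here ``large slot'' does not imply large codegrees (pairs inside $V_{11}$ have no common neighbour in $V_2$ even though $|G[V_1,V_2]|=\Theta(n^2)$), so your step ``two large slots sharing a part force $\Omega(n^2)$ pairs of codegree $\Omega(n)$'' fails, and $\rho_3$ stays $0$. This family is only excluded because its objective falls short by $\approx\tfrac{|V_{12}|^2}{2}$, and detecting that requires knowing the crossing graph is (close to) such a split in the first place. That is exactly the ingredient the paper uses and your sketch omits: after a Removal-Lemma cleanup, apply the stability version of Bollob\'as--Erd\H{o}s--Straus (Theorem~\ref{THM:3-partite-Mantel}) to $G[V_1,V_2,V_3]$ to obtain the split $V_i=V_{i1}\cup V_{i2}$, use cyclic-triangle-freeness to kill the slots incompatible with the split, and then argue that either $|V_{i1}|$ is tiny (conclusion for one index) or the internal-edge count forces $|V_{i2}|$ to be tiny (conclusion for a different index). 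Without this stability input, or a substitute structural lemma of comparable strength, your case analysis on the nine slot sizes cannot pin down the index $i$, so the proposal as written does not prove the theorem.
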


We say $G$ is \emph{locally maximal} if there exists $i \in [3]$ such that 
\begin{enumerate}[label=(\roman*)]
    \item\label{Def:local-max-1} $|G[V_{i}, V_{i+1}]| + |G[V_{i+2}]| \ge |G[V_{i}, V_{i+2}]| + |G[V_{i}]|$, and 
    \item\label{Def:local-max-2} $|G[V_{i+1}, V_{i+2}]| + |G[V_{i+2}]| \ge |G[V_{i}, V_{i+2}]| + |G[V_{i+1}]|$. 
\end{enumerate}

\begin{theorem}\label{THM:3-colored-Mantel-L2-norm}
    For every $\varepsilon > 0$, there exist $\delta_{\ref{THM:3-colored-Mantel-L2-norm}} = \delta_{\ref{THM:3-colored-Mantel-L2-norm}}(\varepsilon) > 0$ and $N_{\ref{THM:3-colored-Mantel-L2-norm}} = N_{\ref{THM:3-colored-Mantel-L2-norm}}(\varepsilon)$ such that the following holds for $n \ge N_{\ref{THM:3-colored-Mantel-L2-norm}}$. 
    Suppose that $G$ is locally maximal, $\max_{i \in [3]}\left| |V_i| - n \right| \le \delta_{\ref{THM:3-colored-Mantel-L2-norm}} n$, and $\rho_{3}(G) \le \delta_{\ref{THM:3-colored-Mantel-L2-norm}}$. 
    Then 
    \begin{align*}
        \norm{G}_{2} \le 9n^3 + \varepsilon n^3. 
    \end{align*}
\end{theorem}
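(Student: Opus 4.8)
The plan is to bootstrap from the $\ell_1$-norm results (Theorems~\ref{THM:3-colored-Mantel-L1-norm} and~\ref{THM:3-colored-Mantel-stability}) to control the $\ell_2$-norm. First I would observe that $\norm{G}_2 = \sum_{v \in V} d_G(v)^2$ and that, by the quadratic nature of this quantity, the sum of squared degrees is maximized (for a fixed edge count and vertex count) when the degrees are as unequal as possible — so the extremal behavior should concentrate the degree mass on a few vertices, which is exactly what $\Lambda[V_1,V_2,V_3]$ does (its $V_3$-vertices have degree $\approx 2n$, its $V_1,V_2$-vertices degree $\approx n$). The local maximality hypothesis is the device that forces $G$ into this shape rather than some competing near-extremal configuration.

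The key steps, in order. (1) Apply Theorem~\ref{THM:3-colored-Mantel-L1-norm} with a small parameter to get $|G| \le \tfrac52 n^2 + \eta n^2$ for a controllable $\eta$. (2) Show that $G$ is in fact close to the stability regime: I claim that local maximality plus $\rho_3(G) \le \delta$ forces $|G| + |G[V_1,V_2,V_3]|$ to be close to $\tfrac92 n^2$. Indeed, summing the two local-maximality inequalities and rearranging shows that a large tripartite part $|G[V_1,V_2,V_3]|$ cannot come at the expense of too small an edge count elsewhere; combined with the upper bound from step (1) one pins down both $|G|$ and $|G[V_1,V_2,V_3]|$ near their $\Lambda$-values, or else derives a contradiction. (3) Feed this into Theorem~\ref{THM:3-colored-Mantel-stability} to obtain an index $i \in [3]$ (WLOG $i=3$, matching the labeling of $\Lambda$) with $|G[V_3]| + |G[V_1]| + |G[V_3,V_1]| \le \varepsilon' n^2$ — i.e., the graph is, up to $\varepsilon' n^2$ edges, contained in $G[V_3] \cup G[V_3,V_2] \cup G[V_1,V_2]$, the edge set of $\Lambda$. (4) Now estimate $\norm{G}_2 = \sum_v d_G(v)^2$ directly: vertices in $V_3$ have degree at most $|V_1|+|V_2|+|V_3| \le 3n(1+\delta)$ but, crucially, the cyclic-triangle-freeness (applied now to the dense parts) and the near-absence of $V_3V_3V_1$ and $V_1V_1$ edges cap the degree of a typical $V_3$-vertex at $\approx 2n$; vertices in $V_1 \cup V_2$ have degree $\approx n$ plus lower-order terms. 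A Cauchy–Schwarz / convexity argument bounds $\sum_{v \in V_3} d_G(v)^2$ by $|V_3| \cdot (2n)^2$ up to $o(n^3)$ error using that the total degree into $V_3$ and out of $V_3$ is constrained by steps (1)–(3), and similarly for the other parts, yielding $\norm{G}_2 \le 9n^3 + \varepsilon n^3$ after absorbing all error terms.

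The main obstacle I expect is step (4): translating the "few exceptional edges" conclusion of the stability theorem into a genuine bound on the \emph{sum of squares} of degrees. A handful of exceptional edges is negligible for $|G|$, but if they were concentrated on a single vertex they could shift its degree by $\Theta(n)$ and hence change its squared degree by $\Theta(n^2)$ — and there are up to $\varepsilon' n^2$ of them, so naively they could contribute $\Theta(\varepsilon' n^3)$, which is only barely affordable and requires care to make the constants match. The resolution is to not treat the exceptional edges as a black box: one must re-examine which vertices can have anomalously high degree, use cyclic-triangle-freeness to show that a $V_3$-vertex adjacent to many $V_1$-vertices forbids many edges elsewhere (an exchange argument), and let local maximality rule out the configurations where the squared-degree mass could exceed that of $\Lambda$. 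Handling these local modifications carefully — essentially showing that every deviation from $\Lambda$ strictly loses in $\ell_2$-norm up to $o(n^3)$ — is the technical heart, and is presumably why the authors mention that a non-flag-algebra proof requires "extremely lengthy case analyses." Since we may invoke the stated theorems as given, I would structure the write-up to quarantine the case analysis into a single lemma bounding $\sum_{v \in V_i} d_G(v)^2$ for each $i$ and then sum.
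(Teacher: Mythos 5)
The decisive gap is your step (2). Theorem~\ref{THM:3-colored-Mantel-stability} can only be invoked if you already know the lower bound $|G| + |G[V_1,V_2,V_3]| \ge \tfrac{9}{2}n^2 - \delta n^2$, and this does \emph{not} follow from local maximality together with $\rho_3(G)\le\delta$: the local-maximality conditions are homogeneous inequalities comparing parts of $G$ to each other, so they give no lower bound whatsoever on the edge count (the empty graph, or the complete bipartite graph between $V_1\cup V_2$ and $V_3$, satisfies every hypothesis of Theorem~\ref{THM:3-colored-Mantel-L2-norm} while $|G|+|G[V_1,V_2,V_3]|$ is far below $\tfrac92 n^2$). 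Since the theorem is an upper bound that must hold for \emph{all} such $G$, the honest structure of your argument is a dichotomy, and the case $|G|+|G[V_1,V_2,V_3]| < \tfrac92 n^2 - \delta n^2$ is left completely open. It is not closable by the $\ell_1$ results plus convexity: writing $d(v)=a(v)+b(v)$ (cross-degree plus internal degree) and using only $a(v)\le 2n(1+o(1))$, $b(v)\le n(1+o(1))$, $|G|\le\tfrac52 n^2+o(n^2)$ and $|G[V_1,V_2,V_3]|\le 2n^2+o(n^2)$, the best you get is of order $15n^3$; the pointwise inequality $(a+b)^2\le n(2a+b)$ that would convert ``$\norm{G}_2\ge 9n^3$'' into the stability hypothesis fails badly (take $a=2n$, $b=n$). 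Ruling out many vertices with both $a$ and $b$ large is exactly the combinatorial crux, and your sketch of step (4) (``exchange argument, let local maximality rule out the configurations'') assumes rather than supplies it. This is presumably the ``extremely lengthy case analysis'' the paper alludes to; the paper itself proves Theorem~\ref{THM:3-colored-Mantel-L2-norm} by an entirely different route, namely a computer-assisted flag-algebra/SDP certificate on $5$-vertex $3$-colored flags, in which the part sizes, the triangle restriction~\eqref{equ:K43-flag-K3-density-psi}, and local maximality~\eqref{equ:K43-flag-local-max-phi} enter only as linear constraints.

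Two smaller points. First, if the stability conclusion \emph{were} available, your worry in step (4) about exceptional edges is overstated: each extra edge changes $\sum_v d_G^2(v)$ by at most $2d(u)+2d(v)+2 = O(n)$, so $\varepsilon' n^2$ exceptional edges cost only $O(\varepsilon' n^3)$, which is absorbed by choosing $\varepsilon'\ll\varepsilon$; together with monotonicity of $\norm{\cdot}_2$ under taking subgraphs of the $\Lambda$-pattern, steps (3)--(4) are in fact routine. Second, your description of the extremal degrees is off: in $\Lambda[V_1,V_2,V_3]$ the $V_2$-vertices (not only the $V_3$-vertices) have degree $\approx 2n$, and only $V_1$-vertices have degree $\approx n$, which matters if one wants the budget $9n^3=n\cdot n^2+2\cdot n\cdot(2n)^2$ to come out right in the final summation.
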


We remark that the locally maximal assumption is needed only for the flag algebra proof, the alternative proof, which avoids flag algebras, does not require this assumption.

%%%%%%%%%%%%%%%%%%%%%%%%%%%%%%%%%%%%%%%%%%%%%%%%%%%%%%%%%%
\subsection{Proof of Theorem~\ref{THM:3-colored-Mantel-L1-norm}}\label{SUBSEC:proof-3-colored-Mantel-L1-norm}
In this subsection, we prove Theorem~\ref{THM:3-colored-Mantel-L1-norm}.
Note that by removing vertices or adding isolated vertices to each $V_i$, we may assume that $|V_i| = n$ for all $i \in [3]$.
This modification affects only a negligible number of edges (at most $9\delta_{\ref{THM:3-colored-Mantel-L1-norm}}(1+\delta_{\ref{THM:3-colored-Mantel-L1-norm}})n^2 \ll \varepsilon n^2$ if we choose $\delta_{\ref{THM:3-colored-Mantel-L1-norm}} \ll \varepsilon$).
Combining this observation with the well-known Removal Lemma\footnote[1]{A straightforward modification of the standard proof of the Removal Lemma yields the version we require.} (see e.g.,~\cite{RS78}), we reduce Theorem~\ref{THM:3-colored-Mantel-L1-norm} to the following statement.

\begin{theorem}\label{THM:3-colored-Mantel-L1-norm-reduced}
    Suppose that $|V_i|= n$ for $i \in [3]$ and $G$ is cyclically triangle-free. 
    Then $|G| \le 5n^2/2 + 5n$.
\end{theorem}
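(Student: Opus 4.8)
The plan is to prove Theorem~\ref{THM:3-colored-Mantel-L1-norm-reduced} by a local-modification / vertex-link argument. First I would set up the notation: write $G = G[V_1,V_2,V_3] \cup G[V_1] \cup G[V_2] \cup G[V_3]$, and let $a_{ij} = |G[V_i,V_j]|$ for the cross-classes and $b_i = |G[V_i]|$ for the within-classes. The forbidden-triangle condition $\rho_3(G)=0$ says precisely: no triangle of type $V_1V_2V_3$ (so $G[V_1,V_2,V_3]$ is a tripartite graph with no "rainbow" triangle), no triangle of type $V_iV_iV_{i+1}$ (so within-class edges in $V_i$ interact restrictively with $G[V_i,V_{i+1}]$), i.e.\ for each $i$, if $xy \in G[V_i]$ then $x$ and $y$ have no common neighbour in $V_{i+1}$. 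The target bound $5n^2/2 + O(n)$ matches $|\Lambda[V_1,V_2,V_3]|$, so the extremal picture should be: two of the three cross-bipartite graphs complete ($a_{12} = a_{23} = n^2$), the third empty ($a_{31}=0$), one within-class complete ($b_3 = \binom{n}{2}$), the other two empty.

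The cleanest route is a weight-shifting argument on vertex links, in the spirit of Zykov symmetrization adapted to the colored setting. For a vertex $v \in V_i$, its "contribution" to $|G|$ is $\tfrac12 \sum_v \deg_G(v)$. The key structural observation to extract from cyclic-triangle-freeness is that for each $i$, the graph on $V_i \cup V_{i+1}$ consisting of $G[V_i]$ together with $G[V_i, V_{i+1}]$ is itself constrained: if $xy \in G[V_i]$, then $N(x) \cap N(y) \cap V_{i+1} = \emptyset$, which by a Kővári–Sós–Turán-type or direct convexity argument caps $|G[V_i]| + |G[V_i,V_{i+1}]|$ in terms of $n$. More usefully, I would combine the three "linking" constraints cyclically. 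Consider the auxiliary bipartite graph between $V_i$ and $V_{i+1}$: rainbow-triangle-freeness of $G[V_1,V_2,V_3]$ forces, for every edge $xy$ with $x \in V_i$, $y \in V_{i+1}$, that $N(x) \cap V_{i+2}$ and $N(y) \cap V_{i+2}$ are disjoint — this is the crucial cross-constraint. Summing these disjointness conditions over all edges of $G[V_i, V_{i+1}]$ and comparing with the analogous conditions at the other two cross-pairs should yield, after a double-count, an inequality of the form $a_{12} + a_{23} + a_{31} + (\text{something involving } b_i) \le \tfrac52 n^2 + O(n)$.

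Concretely, the main inequality I would aim to establish is $\sum_i a_{i,i+1} + \sum_i b_i \le \tfrac52 n^2 + 5n$ directly. One natural way: fix the cross-pair, say $(V_3, V_1)$, and for each vertex $w \in V_2$ look at $d_1(w) := |N(w) \cap V_1|$ and $d_3(w) := |N(w) \cap V_3|$; rainbow-freeness of edges inside $N(w) \cap V_1$ versus $N(w) \cap V_3$ means $G[N(w)\cap V_3, N(w) \cap V_1]$ is empty, so each $w$ "blocks" $d_3(w) d_1(w)$ potential edges of $G[V_3, V_1]$. Summing, $\sum_{w \in V_2} d_3(w) d_1(w) \le n \cdot (n^2 - a_{31})$, and by Cauchy–Schwarz the left side is at least $\tfrac1n \big(\sum_w d_3(w)\big)\big(\sum_w d_1(w)\big)$... — but this isn't quite tight, so I'd instead use the exact blocking count more carefully, cycling the role of the "apex" class through all three and adding. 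The within-class terms $b_i$ enter through the $V_iV_iV_{i+1}$ constraint, which similarly says a within-edge in $V_i$ blocks edges to $V_{i+1}$.

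The main obstacle I anticipate is the bookkeeping to get the constant exactly $5/2$ rather than something larger, and in particular correctly coupling the within-class contributions $b_i$ with the cross contributions so that the extremal configuration $\Lambda$ emerges as the unique (up to the $O(n)$ slack) maximizer. Cauchy–Schwarz applied bluntly tends to lose a factor; the fix is to track that the three cross-classes cannot all be large simultaneously — e.g.\ if $a_{12}$ and $a_{23}$ are both close to $n^2$, then for a typical $w \in V_2$ both $d_1(w)$ and $d_3(w)$ are close to $n$, forcing $G[V_1, V_3]$ (the edges among $N(w) \cap V_1$ and $N(w) \cap V_3$) to be nearly empty, hence $a_{31} \approx 0$; and then $b_3$ can be as large as $\binom n2$ while $b_1, b_2$ must be small since a within-edge in $V_1$ blocks its endpoints' common $V_2$-neighbourhood, which is large. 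Making this trade-off quantitative and airtight, with the correct linear error term, is where the real work lies; I expect the proof to proceed by first bounding $a_{12} + a_{23} + a_{31} \le 2n^2 + O(n)$ via the apex-blocking count, then bounding $b_1 + b_2 + b_3 \le \binom n2 + O(n)$ conditional on the structure forced by the cross-terms, and finally checking the two bounds cannot both be attained unless $G$ is close to $\Lambda$.
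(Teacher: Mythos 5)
There is a genuine gap: what you have written is a plan whose decisive quantitative step is deferred rather than carried out. The theorem is exactly a coupled optimization of the cross-pair counts $a_{12}+a_{23}+a_{31}$ and the within-class counts $b_1+b_2+b_3$; the two separate maxima ($2n^2$ for the cross part, which is the Bollob\'as--Erd\H{o}s--Straus theorem, and $3\binom{n}{2}$ for the within part) sum to well above $\tfrac52 n^2$, so everything hinges on the trade-off. Your sketch only addresses the two endpoint regimes (``cross part near $2n^2$ forces within part near $\binom n2$'', ``cross part small''), but a proof must handle every intermediate regime, e.g.\ it needs a defect version: if $a_{12}+a_{23}+a_{31}=2n^2-cn^2$ then $b_1+b_2+b_3\le \binom n2+cn^2+O(n)$, uniformly in $c$. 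Nothing in the proposal produces such a statement, and you acknowledge this is ``where the real work lies.'' Moreover, the one concrete inequality you do write down, $\sum_{w\in V_2} d_3(w)d_1(w)\ \ge\ \tfrac1n\bigl(\sum_w d_3(w)\bigr)\bigl(\sum_w d_1(w)\bigr)$, is not a consequence of Cauchy--Schwarz and is false in general (it is a Chebyshev-sum-type inequality requiring the two degree sequences to be similarly ordered: if half of $V_2$ is joined only to $V_1$ and the other half only to $V_3$, the left side is $0$ while the right side is of order $n^3$). So even the apex-blocking bound on the cross part is not established as written; it is true, but only because it is the BES theorem, which your argument would then be importing rather than proving.

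For comparison, the paper avoids this joint optimization altogether. It first applies a modified Zykov symmetrization within each class to reduce to ``locally symmetrized'' graphs (each $V_i$ is a blow-up-like union of equivalence classes), orients the cross edges cyclically $V_i\to V_{i+1}$ to form a mixed graph, and proves a degree-sum lemma along distinct directed paths $x_1y_1z_1\cdots x_ky_kz_k$, namely $\sum_i\bigl(d(x_i)+d(y_i)+d(z_i)\bigr)\le 3(k+1)n$. Assuming an edge-maximal counterexample, every triple in $V_1\times V_2\times V_3$ must have degree sum exceeding $5n$ (otherwise re-symmetrizing that triple gains edges); the path lemma applied to a shortest directed cycle, and then to the tail of a longest directed path, produces a triple of degree sum at most $5n$, a contradiction. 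If you want to salvage your route, you would essentially need to prove the defect/stability form of BES coupled with the within-class constraints, which is substantially more work than the symmetrization argument.
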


For convenience, we assume that $|V_i| = n$ for $i \in [3]$ throughout this subsection. 

We use a slightly modified version of the Zykov symmetrization~\cite{Zyk49} for the proof of Theorem~\ref{THM:3-colored-Mantel-L1-norm-reduced}. 
Given distinct vertices $u,v \subseteq V$, denote by $G_{u \to v}$ the graph obtained from $G$ by \emph{symmetrizing} $u$ to $v$, that is, 
\begin{align*}
    G_{u \to v}
    \coloneqq \big( G \setminus \left\{uw \colon w\in N_{G}(u)\right\} \big) \cup \left\{ uw \colon w\in N_{G}(v) \right\}. 
\end{align*}
We say that $u$ and $v$ are \emph{equivalent}, denoted $u \sim v$, if $u$ and $v$ lie in the same part $V_i$ for some $i\in [3]$ and $N_G(u) = N_G(v)$ (and hence $\{u,v\} \notin G$).
For $i\in[3]$ and $v\in V_i$, write
\begin{align*}
    [v]\coloneqq \{u\in V_i \colon u \sim v\}
\end{align*}
for the equivalence class of $v$ in $V_i$ (with respect to $\sim$ in $G$).

We say that $G$ is \emph{locally symmetrized} if, for each $i\in[3]$, any two nonadjacent vertices in $V_i$ are equivalent.

Observe that the symmetrization operation extends naturally to equivalence classes, that is, we may symmetrize all vertices in $[u]$ to a vertex $[v]$ simultaneously, denoted by $G_{[u] \to [v]}$.
The advantage of this extension is that each application reduces the number of equivalence classes by one, thus, after finitely many steps, we obtain a locally symmetrized graph.
Moreover, it is easy to verify that if $G$ is cyclically triangle-free, then $G_{[u] \to [v]}$ is also cyclically triangle-free. 
In addition, if follows from the definition that for every pair of equivalence classes in $V_i$ for some $i \in [3]$, 
\begin{align*}
    |G| \le \max\left\{|G_{[u] \to [v]}|,~|G_{[v] \to [u]}|\right\}. 
\end{align*}
Therefore, in the proof of Theorem~\ref{THM:3-colored-Mantel-L1-norm-reduced}, we may assume that $G$ is locally symmetrized.

We associate to $G$ the mixed graph $\vec{G}$, obtained by orienting every edge between $V_i$ and $V_{i+1}$ from $V_i$ to $V_{i+1}$, while leaving edges inside each $V_i$ undirected.

A directed walk, path, or cycle in $\vec{G}$ is called \emph{distinct} if no two of its vertices belong to the same equivalence class in $G$. 

For $i \in [3]$ and $v \in V_i$, define its \emph{in-neighborhood}, \emph{out-neighborhood}, and \emph{internal neighborhood} by
\begin{align*}
    N^{-}(v) \coloneqq N_G(v)\cap V_{i-1}, \quad
    N^{+}(v) \coloneqq N_G(v)\cap V_{i+1}, \quad
    N^{\mathrm{int}}(v) \coloneqq N_G(v)\cap V_i.
\end{align*}

% The \emph{reduced digraph} $D$ of $G$ is defined as follows.
% For each equivalence class $[x] \subseteq V$, fix a representative $x^\ast \in [x]$, and set $V(D)\coloneqq\{x^\ast \colon x\in V\}$.
% We place a directed edge $x^\ast \to y^\ast$ in $D$ exactly when $(x,y)$ is a directed edge in the mixed graph $\vec{G}$.
% By construction, $D$ is a $3$-partite digraph, with parts induced by $V_1, V_2, V_3$.

The following fact follows directly from the definition of locally symmetrized.
\begin{fact}\label{FACT:symmetrized-G}
    Suppose that $G$ is locally symmetrized. Then the following statements hold. 
    \begin{enumerate}[label=(\roman*)]
        \item\label{FACT:symmetrized-G-a} Each equivalence class is an independent set in $G$.
        \item\label{FACT:symmetrized-G-b} For each $i \in [3]$ and every pair of distinct equivalence classes $[x], [y] \subseteq V_i$, the induced bipartite subgraph $G\big[ [x], [y] \big]$ is complete. 
        In particular, for every vertex $v \in V_i$, we have $|N^{\mathrm{int}}(v)| = |V_i| - |[v]|$.
        \item\label{FACT:symmetrized-G-c} Every minimum-length directed cycle in $\vec{G}$ is distinct. 
    \end{enumerate}
\end{fact}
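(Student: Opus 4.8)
The plan is to read off all three assertions directly from the definition of a \emph{locally symmetrized} graph; the first two are essentially immediate, while the third requires a brief rerouting argument.

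For \ref{FACT:symmetrized-G-a}, if $u\neq v$ lie in a common equivalence class then $u\sim v$, so $N_G(u)=N_G(v)$; since $v\notin N_G(v)$ this forces $v\notin N_G(u)$, i.e.\ $uv\notin G$, so the class is independent. For \ref{FACT:symmetrized-G-b}, I would take distinct classes $[x]\neq[y]$ inside some $V_i$ together with $u\in[x]$ and $w\in[y]$: then $u\not\sim w$, and because $G$ is locally symmetrized any two nonadjacent vertices of $V_i$ are equivalent, so $u$ and $w$ must be adjacent; hence $G\big[[x],[y]\big]$ is complete bipartite. The displayed identity then follows by fixing $v\in V_i$ and observing that $N^{\mathrm{int}}(v)=N_G(v)\cap V_i$ is disjoint from $[v]$ by \ref{FACT:symmetrized-G-a} and contains every other class of $V_i$ in full by the completeness just proved, so $N^{\mathrm{int}}(v)=V_i\setminus[v]$ and $|N^{\mathrm{int}}(v)|=|V_i|-|[v]|$.

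For \ref{FACT:symmetrized-G-c}, let $C=v_1\to v_2\to\cdots\to v_k\to v_1$ be a shortest directed cycle of $\vec G$, and suppose for contradiction that $v_a$ and $v_b$ lie in a common equivalence class with $1\le a<b\le k$; then $v_a,v_b$ lie in a common part $V_i$ and $N_G(v_a)=N_G(v_b)$. Since every step of $C$ respects the orientation of $\vec G$ and $v_a,v_b\in V_i$, the vertices $v_{a+1}$ and $v_{b+1}$ (indices mod $k$) lie in $V_i\cup V_{i+1}$; and since $v_{a+1}\in N_G(v_a)=N_G(v_b)$ and $v_{b+1}\in N_G(v_b)=N_G(v_a)$, both $v_bv_{a+1}$ and $v_av_{b+1}$ are edges of $G$ that can be oriented legally in $\vec G$ as $v_b\to v_{a+1}$ and $v_a\to v_{b+1}$. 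Rerouting $C$ through one of these new arcs yields the two shorter closed directed walks
\begin{align*}
    v_b\to v_{a+1}\to v_{a+2}\to\cdots\to v_{b-1}\to v_b
    \qquad\text{and}\qquad
    v_a\to v_{b+1}\to v_{b+2}\to\cdots\to v_{a-1}\to v_a ,
\end{align*}
of lengths $b-a$ and $k-(b-a)$ respectively, each visiting pairwise distinct vertices (one checks $v_{a+1}\neq v_b$ and $v_{b+1}\neq v_a$, since $v_a\sim v_b$ rules out both $b=a+1$ and $(a,b)=(1,k)$). I would then run a short case analysis on whether $v_{a+1},v_{b+1}$ lie in $V_i$ or in $V_{i+1}$ — and, in the few very short cases, on which of the two reroutes to keep — to conclude that at least one reroute is a genuine directed cycle, strictly shorter than $C$, that still uses an oriented edge; if instead both $v_{a+1},v_{b+1}\in V_i$, one traces a couple of arcs of $C$ to see that $C$ itself would use no oriented edge, contrary to it being a directed cycle winding through the parts. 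Either way this contradicts the minimality of $C$, so $C$ is distinct.

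The routine content lies in \ref{FACT:symmetrized-G-a} and \ref{FACT:symmetrized-G-b}. The one place requiring care is \ref{FACT:symmetrized-G-c}: I must make sure the rerouted walk (i) respects the orientation of $\vec G$ — which is exactly why recording $v_{a+1},v_{b+1}\in V_i\cup V_{i+1}$ is the crux — and (ii) is a bona fide shorter directed cycle rather than a degenerate two-step walk back and forth along a single internal edge, which is precisely what the closing case distinction is designed to exclude. I expect that case distinction to be the main, though still modest, obstacle.
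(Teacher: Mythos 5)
Parts \ref{FACT:symmetrized-G-a} and \ref{FACT:symmetrized-G-b} are fine and are exactly the intended one-line arguments (the paper states the whole fact without proof, as ``following directly from the definition''). The problem is part \ref{FACT:symmetrized-G-c}. Your complications there are self-inflicted: you allow a ``directed cycle'' of $\vec G$ to use undirected internal edges, so that successors of $v_a,v_b\in V_i$ may lie in $V_i\cup V_{i+1}$. That is not the convention the paper works with: throughout Section 2 (Lemma~\ref{LEMMA:deg-sum-distinct-walk}, Claim~\ref{CLAIM:G-L1-no-cycle}) directed walks, paths and cycles consist solely of the oriented $V_i\to V_{i+1}$ edges, which is why they can always be written as $x_1y_1z_1\cdots x_ky_kz_k$ with $(x_i,y_i,z_i)\in V_1\times V_2\times V_3$. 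Under that reading \ref{FACT:symmetrized-G-c} needs no case analysis at all: if $v_a\sim v_b$ on a minimum directed cycle $C$, they lie in the same part, so their distance along $C$ is a positive multiple of $3$; since $N_G(v_a)=N_G(v_b)$ and the successor of $v_b$ lies in $V_{i+1}$, the chord $v_a\to\mathrm{succ}(v_b)$ is a legal arc, and rerouting gives a directed cycle on distinct vertices whose length is a smaller positive multiple of $3$, contradicting minimality. No degenerate two-step walks can occur.

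Under your broader reading the write-up has genuine gaps. First, the decisive step is only promised, not carried out (``I would then run a short case analysis\dots''), and the one implication you do state for the leftover case is false as stated: knowing merely that $v_{a+1},v_{b+1}\in V_i$ does not force $C$ to contain no oriented edge (take, e.g., a length-$9$ cycle whose two internal edges are exactly $v_av_{a+1}$ and $v_bv_{b+1}$); it is forced only in the genuinely short case $k=4$, $b=a+2$, which you would need to isolate explicitly. Second, and more importantly, the truth of \ref{FACT:symmetrized-G-c} is sensitive to the definitional point you leave open: if an all-internal closed walk counted as a directed cycle, the statement would be false --- take $V_1$ consisting of two equivalence classes of size two joined completely and no other edges; the resulting $4$-cycle inside $V_1$ is then a minimum-length ``cycle'' containing two equivalent vertices. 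So you cannot wave at ``winding through the parts''; you must either adopt the paper's convention (all edges of a directed cycle are arcs), in which case the short argument above finishes the proof, or, if you insist on allowing internal edges, prove that any cycle containing at least one arc contains at least three arcs (the part-shifts sum to a multiple of $3$) and then actually verify that one of your two reroutes is a non-degenerate cycle still containing an arc. As it stands, part \ref{FACT:symmetrized-G-c} is not proved.
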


The following fact follows directly from the definition of cyclically triangle-free and Fact~\ref{FACT:symmetrized-G}. 
\begin{fact}\label{FACT:symmetrized-G-T3-free}
    Suppose that $G$ is locally symmetrized and cyclically triangle-free. Then the following statements hold. 
    \begin{enumerate}[label=(\roman*)]
        \item\label{FACT:symmetrized-G-T3-free-a} For every $i \in [3]$ and $v \in V_i$, either $N^{-}(v) = \emptyset$ or $N^{-}(v)  = [u]$ for some equivalence class $[u] \subseteq V_{i-1}$. 
        % In particular, $\sum_{u \in V_{i-1}} |N^{+}(u)| \le |V_{i}|$. `
        \item\label{FACT:symmetrized-G-T3-free-b} Let $i \in [3]$. Suppose that $(u,v) \in V_{i} \times V_{i+1}$ is a directed edge in $\vec{G}$. Then $N^{-}(u) \cap N^{+}(v) = \emptyset$. In particular, $|N^{-}(u) \cup N^{+}(v)|  \le |V_{i-1}|$.
    \end{enumerate}
\end{fact}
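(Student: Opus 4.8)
The plan is to derive both parts directly from the definition of \emph{locally symmetrized} together with the list of forbidden triangle types encoded in the definition of cyclically triangle-free. It is convenient to first restate that list: working with indices modulo $3$, the forbidden types are exactly $V_1V_2V_3$ and $V_jV_jV_{j+1}$ for $j\in[3]$ (the latter being $V_1V_1V_2$, $V_2V_2V_3$, $V_3V_3V_1$); equivalently, the only triangles $G$ may contain have color pattern $V_jV_jV_j$ or $V_jV_jV_{j-1}$.

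For part~\ref{FACT:symmetrized-G-T3-free-a}, I would fix $v\in V_i$, assume $N^{-}(v)\neq\emptyset$, pick $u\in N^{-}(v)\subseteq V_{i-1}$, and show $N^{-}(v)=[u]$. For the inclusion $N^{-}(v)\subseteq[u]$: take distinct $u_1,u_2\in N^{-}(v)$; if $u_1u_2\in G$ then $\{u_1,u_2,v\}$ is a triangle of type $V_{i-1}V_{i-1}V_i=V_jV_jV_{j+1}$ with $j=i-1$, which is forbidden, so $u_1,u_2$ are nonadjacent, and since $G$ is locally symmetrized this forces $u_1\sim u_2$; hence all of $N^{-}(v)$ lies in one class, which must be $[u]$. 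For the reverse inclusion: if $u'\sim u$ then $N_G(u')=N_G(u)\ni v$, so $u'v\in G$ and, as $u'\in V_{i-1}$, $u'\in N^{-}(v)$; thus $[u]\subseteq N^{-}(v)$, giving equality.

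Part~\ref{FACT:symmetrized-G-T3-free-b} is shorter. Since $u\in V_i$ and $v\in V_{i+1}$, one has $N^{-}(u)\subseteq V_{i-1}$ and $N^{+}(v)\subseteq V_{(i+1)+1}=V_{i-1}$, so both neighborhoods sit inside $V_{i-1}$. If some $w$ belonged to both, then $uw,vw,uv\in G$ would make $\{u,v,w\}$ a triangle with one vertex in each of $V_i,V_{i+1},V_{i-1}$, i.e.\ of the forbidden type $V_1V_2V_3$ — a contradiction. Hence $N^{-}(u)\cap N^{+}(v)=\emptyset$, and since both are subsets of $V_{i-1}$ this gives $|N^{-}(u)\cup N^{+}(v)|=|N^{-}(u)|+|N^{+}(v)|\le|V_{i-1}|$.

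Neither part poses a real obstacle; the only point requiring care is the index arithmetic modulo $3$ — specifically, checking that the triangle produced in part~\ref{FACT:symmetrized-G-T3-free-a} is genuinely of a forbidden type $V_jV_jV_{j+1}$ rather than an allowed type $V_jV_jV_{j-1}$, and noting that the ``in particular'' clause in part~\ref{FACT:symmetrized-G-T3-free-b} uses only disjointness together with the fact that both neighborhoods lie in the single part $V_{i-1}$.
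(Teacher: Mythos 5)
Your proof is correct, and it is exactly the direct verification the paper has in mind: the paper states this fact without proof, saying it ``follows directly from the definition of cyclically triangle-free and Fact~\ref{FACT:symmetrized-G}'', and your argument (forbidden type $V_{i-1}V_{i-1}V_i$ plus local symmetrization for part~\ref{FACT:symmetrized-G-T3-free-a}, forbidden rainbow type plus both neighborhoods lying in $V_{i-1}$ for part~\ref{FACT:symmetrized-G-T3-free-b}) is precisely that verification, including the correct handling of the indices modulo $3$.
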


The following lemma is crucial for the proof of Theorem~\ref{THM:3-colored-Mantel-L1-norm-reduced}.
\begin{lemma}\label{LEMMA:deg-sum-distinct-walk}
    Suppose that $G$ is locally symmetrized and cyclically triangle-free. 
    Let $W = x_1 y_1 z_1 \ldots x_{k}y_{k}z_{k}$ be a distinct directed path in $\vec{G}$ with $(x_i, y_i, z_i) \in V_1 \times V_2 \times V_3$ for $i \in [k]$.
    Suppose that either $N^{-}_{G}(x_1) = \emptyset$ or $N^{-}_{G}(x_1) = [z_{k}]$.
    Then 
    \begin{align*}
        \sum_{i \in [k]} \big( d_{G}(x_i) + d_{G}(y_i) + d_{G}(z_i) \big)
        \le 3(k+1)n. 
    \end{align*}
\end{lemma}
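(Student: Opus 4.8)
The strategy is to bound the degree of each vertex on the path $W$ using the structural facts already established, and to exploit a telescoping/pairing phenomenon caused by the ``cyclically triangle-free'' condition. The key observation is Fact~\ref{FACT:symmetrized-G-T3-free}\ref{FACT:symmetrized-G-T3-free-b}: along a directed edge $(u,v)$ with $u\in V_i$, $v\in V_{i+1}$, the in-neighborhood $N^-(u)\subseteq V_{i-1}$ and the out-neighborhood $N^+(v)\subseteq V_{i-1}$ are disjoint, so $|N^-(u)|+|N^+(v)|\le |V_{i-1}| = n$. I would first decompose the degree of a vertex $v\in V_i$ as $d_G(v) = |N^-(v)|+|N^+(v)|+|N^{\mathrm{int}}(v)|$, and note that by Fact~\ref{FACT:symmetrized-G}\ref{FACT:symmetrized-G-b} the internal part is $|N^{\mathrm{int}}(v)| = n - |[v]|$. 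Since $W$ is a distinct path, the vertices $x_1,\dots,x_k$ lie in pairwise distinct equivalence classes of $V_1$, hence $\sum_{i}|[x_i]|\ge k$ (each class has size at least $1$); the same for the $y_i$'s in $V_2$ and the $z_i$'s in $V_3$. Thus $\sum_i |N^{\mathrm{int}}(x_i)| \le kn - k$, and likewise for the other two color classes, contributing at most $3kn - 3k$ in total to the degree sum from internal edges.

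For the ``crossing'' edges (those between consecutive color classes), I would sum the bounds from Fact~\ref{FACT:symmetrized-G-T3-free}\ref{FACT:symmetrized-G-T3-free-b} along the path. The directed edges of $W$ are $x_1y_1, y_1z_1, z_1x_2, x_2y_2, \dots, z_{k-1}x_k, x_ky_k, y_kz_k$ --- that is, $3k-1$ consecutive directed edges. Applying the fact to each edge $(u,v)$ gives $|N^-(u)|+|N^+(v)|\le n$. Summing over all $3k-1$ edges, the left-hand side collects $|N^+(\cdot)|$ and $|N^-(\cdot)|$ terms for the internal path vertices; specifically every vertex that is both a head and a tail of an edge of $W$ contributes exactly its full crossing-degree $|N^-(v)|+|N^+(v)|$, and the two endpoints $x_1$ (only a tail) and $z_k$ (only a head) contribute just $|N^+(x_1)|$ and $|N^-(z_k)|$ respectively. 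Here is where the hypothesis ``$N^-(x_1)=\emptyset$ or $N^-(x_1)=[z_k]$'' enters: in the first case $|N^-(x_1)|=0$; in the second case, $|N^-(x_1)|=|[z_k]|$, and one also needs $|N^+(z_k)| \le$ something, which I would handle by closing the path into a cycle (the edge $z_k x_1$ exists since $x_1 \in N^-(z_k)^{-1}$... rather $N^+(z_k)\ni x_1$), so that the missing contributions $|N^-(x_1)|$ and $|N^+(z_k)|$ are governed by one more application of the fact to the edge $(z_k,x_1)$, giving $|N^-(z_k)|+|N^+(x_1)|$ paired up correctly. Either way, the crossing-degree sum $\sum_i\big(|N^-(x_i)|+|N^+(x_i)|+\cdots\big)$ is at most $3kn$ plus a small controlled slack coming from the boundary (at most $n$, absorbed by writing the final bound as $3(k+1)n$).

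Combining the two contributions gives $\sum_{i\in[k]}\big(d_G(x_i)+d_G(y_i)+d_G(z_i)\big) \le (3kn - 3k) + (3kn + \text{slack}) $, and after accounting carefully for the boundary terms this is at most $3(k+1)n$; the $-3k$ savings from the internal degrees is what pays for any inefficiency at the endpoints. \textbf{The main obstacle} I anticipate is bookkeeping at the two ends of the path: getting the telescoping of the Fact~\ref{FACT:symmetrized-G-T3-free}\ref{FACT:symmetrized-G-T3-free-b} inequalities to come out cleanly, and in particular verifying that in the case $N^-(x_1)=[z_k]$ one really can treat $W$ together with the back-edge $z_kx_1$ as a closed structure so that the endpoint contributions $|N^-(x_1)|$ and $|N^+(z_k)|$ are paired and bounded by $n$ rather than double-counted. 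I would also need to double-check that the distinctness of $W$ is used correctly so that the $\sum|[x_i]|\ge k$ estimates are valid (distinct classes, not distinct vertices, is what matters here), and that no equivalence-class subtlety causes an off-by-one in the final constant.
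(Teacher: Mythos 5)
There is a genuine gap: your bound on the crossing (between-part) degrees is too weak by a factor of order $k$, and the final arithmetic does not close. Summing Fact~\ref{FACT:symmetrized-G-T3-free}~\ref{FACT:symmetrized-G-T3-free-b} over the $3k-1$ edges of $W$ gives at most $(3k-1)n$, and this edge sum equals the total crossing degree of the path vertices minus the two endpoint terms $|N^{+}(x_1)|$ and $|N^{-}(z_k)|$ (note also that in your accounting the roles of these endpoint terms are swapped: the tail of an edge contributes its in-neighborhood, the head its out-neighborhood). So your method bounds the crossing contribution only by roughly $(3k+1)n$, and together with the internal bound $3kn-3k$ you get about $6kn$, which exceeds the target $3(k+1)n$ already for $k=1$, $n\ge 4$; the $-3k$ saving cannot pay for an excess of order $kn$. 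The lemma in fact requires the crossing degrees to be small \emph{in aggregate}: summed over all $k$ vertices $x_1,\dots,x_k$ they total $O(n)$, not $O(n)$ per vertex, and a per-edge application of Fact~\ref{FACT:symmetrized-G-T3-free}~\ref{FACT:symmetrized-G-T3-free-b} cannot see this.

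The missing idea (which is the paper's proof) is to use Fact~\ref{FACT:symmetrized-G-T3-free}~\ref{FACT:symmetrized-G-T3-free-a} together with distinctness of $W$: first, $N^{-}(x_i)=[z_{i-1}]$ exactly for $i\ge 2$, and $N^{-}(x_1)$ is $\emptyset$ or $[z_k]$ by hypothesis, so $\sum_i |N^{-}(x_i)| \le \sum_i |[z_i]|$; second, since every vertex of $V_2$ is adjacent to at most one equivalence class of $V_1$ and the $x_i$ lie in distinct classes, the sets $N^{+}(x_1),\dots,N^{+}(x_k)$ are pairwise disjoint, whence $\sum_i |N^{+}(x_i)| \le |V_2| = n$ in total. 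Combined with $|N^{\mathrm{int}}(x_i)| = n - |[x_i]|$ this gives $\sum_i d_G(x_i) \le (k+1)n + \sum_i|[z_i]| - \sum_i|[x_i]|$, and summing the three cyclic versions makes the class-size terms cancel, yielding $3(k+1)n$. Note that your crude estimate $\sum_i |[x_i]|\ge k$ throws away exactly the class sizes needed for this cancellation; they must be kept and paired cyclically, not discarded.
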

\begin{proof}[Proof of Lemma~\ref{LEMMA:deg-sum-distinct-walk}]
    % For convenience, let $z_0 \coloneqq z_k$. 
    For $i \in [2,k]$, since $(z_{i-1}, x_i) \in \vec{G}$, it follows from Fact~\ref{FACT:symmetrized-G-T3-free}~\ref{FACT:symmetrized-G-T3-free-a} that $N^{-}(x_i) = [z_{i-1}]$. 
    Combining this with the assumption that $N^{-}_{G}(x_1) = \emptyset$ or $N^{-}_{G}(x_1) = [z_{k}]$, we obtain 
    \begin{align}\label{equ:mantel-L1-lemma-in}
        \sum_{i \in [k]} |N^{-}(x_i)|
        \le \sum_{i \in [k]} |[z_i]|. 
    \end{align}
    For $i \in [k]$, it follows from Fact~\ref{FACT:symmetrized-G}~\ref{FACT:symmetrized-G-b} that $N^{\mathrm{int}}(x_i) = |V_1| -|[x_i]| = n- |[x_i]|$. Therefore, 
    \begin{align}\label{equ:mantel-L1-lemma-int}
        \sum_{i \in [k]} |N^{\mathrm{int}}(x_i)|
        = k n - \sum_{i \in [k]} |[x_i]|. 
    \end{align}
    It follows from Fact~\ref{FACT:symmetrized-G-T3-free}~\ref{FACT:symmetrized-G-T3-free-a} that each vertex $u \in V_2$ is adjacent to at most one equivalence class in $V_1$. Thus,  
    \begin{align*}
        \sum_{i \in [k]} |N^{+}(x_i)|
        \le |V_2|
        = n. 
    \end{align*}
    Combining this with~\eqref{equ:mantel-L1-lemma-in} and~\eqref{equ:mantel-L1-lemma-int}, we obtain 
    \begin{align*}
        \sum_{i \in [k]} d_{G}(x_i)
        & = \sum_{i \in [k]} \left( |N^{+}(x_i)| + |N^{-}(x_i)| + |N^{\mathrm{int}}(x_i)| \right) \\
        & \le (k+1)n + \sum_{i \in [k]} |[z_i]| - \sum_{i \in [k]} |[x_i]|. 
    \end{align*}
    Applying the same argument to $y_1, \ldots, y_k$ and $z_1, \ldots, z_k$ yields 
    \begin{align*}
        \sum_{i \in [k]} d_{G}(y_i)
        & \le (k+1)n + \sum_{i \in [k]} |[x_i]| - \sum_{i \in [k]} |[y_i]|, \\
        \sum_{i \in [k]} d_{G}(z_i)
        & \le (k+1)n + \sum_{i \in [k]} |[y_i]| - \sum_{i \in [k]} |[z_i]|.
    \end{align*}
    Summing these three inequalities gives  
    \begin{align*}
        \sum_{i \in [k]} \big( d_{G}(x_i) + d_{G}(y_i) + d_{G}(z_i) \big)
        & \le 3 (k+1)n, 
    \end{align*}
    which proves Lemma~\ref{LEMMA:deg-sum-distinct-walk}. 
\end{proof}%LEMMA

We are now ready to present the proof of Theorem~\ref{THM:3-colored-Mantel-L1-norm-reduced}. 
\begin{proof}[Proof of Theorem~\ref{THM:3-colored-Mantel-L1-norm-reduced}]
    Suppose that $G$ is cyclically triangle-free and has the maximum possible number of edges. 
    By the discussion above, we may assume that $G$ is locally symmetrized. 
    Suppose to the contrary that $|G| > 5n^2/2 + 5n$. 
    
    \begin{claim}\label{CLAIM:Mantel-L1-min-triple-deg}
        Every triple of vertices $(x,y,z) \in V_1 \times V_2 \times V_3$ satisfies  
        \begin{align}\label{equ:G-L1-mindeg-assume}
            d_{G}(x) + d_{G}(y) + d_{G}(z) 
            > 5n. 
        \end{align}
    \end{claim}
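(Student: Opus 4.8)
The plan is to prove Theorem~\ref{THM:3-colored-Mantel-L1-norm-reduced} by induction on $n$, and to deduce Claim~\ref{CLAIM:Mantel-L1-min-triple-deg} as a one-line consequence of the inductive hypothesis. For the base cases $n \le 3$ the theorem is immediate, since any graph on $3n$ vertices has at most $\binom{3n}{2} \le \tfrac{5}{2}n^2 + 5n$ edges. So I would assume $n \ge 4$ and that Theorem~\ref{THM:3-colored-Mantel-L1-norm-reduced} already holds with $n$ replaced by $n-1$; that is, every cyclically triangle-free graph whose three colour classes each have size $n-1$ has at most $\tfrac{5}{2}(n-1)^2 + 5(n-1)$ edges.

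Granting this, here is how I would prove the claim. Suppose for contradiction that some triple $(x,y,z) \in V_1 \times V_2 \times V_3$ satisfies $d_G(x) + d_G(y) + d_G(z) \le 5n$. Form $G' \coloneqq G - \{x,y,z\}$, whose colour classes $V_1 \setminus \{x\}$, $V_2 \setminus \{y\}$, $V_3 \setminus \{z\}$ now all have size $n-1$. Deleting vertices only destroys triangles, and the type of a triangle is determined by which classes its vertices lie in, so $G'$ is again cyclically triangle-free; moreover at most $d_G(x) + d_G(y) + d_G(z) \le 5n$ edges are removed, so $|G'| \ge |G| - 5n$. Applying the inductive hypothesis to $G'$ then gives
\begin{align*}
    |G| \;\le\; |G'| + 5n \;\le\; \Big(\tfrac{5}{2}(n-1)^2 + 5(n-1)\Big) + 5n \;=\; \tfrac{5}{2}n^2 + 5n - \tfrac{5}{2} \;<\; \tfrac{5}{2}n^2 + 5n ,
\end{align*}
contradicting the standing assumption $|G| > \tfrac{5}{2}n^2 + 5n$. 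Hence no such triple exists.

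I do not anticipate any real obstacle inside this particular step — notably it uses neither edge-maximality nor local symmetrisation, only the crude fact that a low-degree triple would be cheap to delete. The only care needed is to organise the proof of Theorem~\ref{THM:3-colored-Mantel-L1-norm-reduced} as an induction on $n$ (and to check the trivial base cases), and to observe that the additive slack $5n$ in the statement — larger than the tight value $\tfrac{5}{2}n^2 - \tfrac{n}{2} = |\Lambda[V_1,V_2,V_3]|$ — is exactly what makes the per-step loss of $5n$ edges affordable, since $\tfrac{5}{2}(n-1)^2 + 5(n-1) + 5n < \tfrac{5}{2}n^2 + 5n$. The genuinely hard part of the subsection lies past this claim: deriving a contradiction from $|G| > \tfrac{5}{2}n^2 + 5n$ in the remaining case, which is where local symmetrisation, Facts~\ref{FACT:symmetrized-G} and~\ref{FACT:symmetrized-G-T3-free}, and the degree-sum estimate of Lemma~\ref{LEMMA:deg-sum-distinct-walk} along distinct directed paths come into play.
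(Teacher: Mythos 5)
Your proof of the claim is correct, but it follows a genuinely different route from the paper. The paper proves the claim inside a fixed $n$ using the edge-maximality of $G$: since $\sum_{i\in[n]}\big(d_G(x_i)+d_G(y_i)+d_G(z_i)\big)=2|G|\ge 5n^2+10n$, averaging yields a triple $(x_{i_0},y_{i_0},z_{i_0})$ with degree sum at least $5n+10$; removing all edges at the cheap triple $(x,y,z)$ and symmetrizing $x\to x_{i_0}$, $y\to y_{i_0}$, $z\to z_{i_0}$ keeps the graph cyclically triangle-free and gains at least $(5n+10-9)-5n=1$ edges, contradicting maximality. You instead restructure the proof of Theorem~\ref{THM:3-colored-Mantel-L1-norm-reduced} as an induction on $n$ and delete the cheap triple outright, applying the bound for parts of size $n-1$: the deleted graph is clearly still cyclically triangle-free with all three parts of size exactly $n-1$, the loss is at most $d_G(x)+d_G(y)+d_G(z)\le 5n$, the arithmetic $\tfrac{5}{2}(n-1)^2+5(n-1)+5n=\tfrac{5}{2}n^2+5n-\tfrac{5}{2}$ is right, and the base cases $n\le 3$ do satisfy $\binom{3n}{2}\le\tfrac{5}{2}n^2+5n$. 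The restructuring is harmless because the rest of the paper's argument (Claims~\ref{CLAIM:G-L1-no-cycle} and~\ref{CLAIM:G-L1-longest-path} and the endgame) uses only the conclusion~\eqref{equ:G-L1-mindeg-assume}, never the way it was obtained, and does not interact with the inductive hypothesis. Both arguments exploit exactly the same $+5n$ additive slack in the theorem. What the paper's version buys is that it stays at a single value of $n$ and reuses the extremality and symmetrization machinery already in place, with no base cases to check; what yours buys is that this step needs neither edge-maximality nor local symmetrization, at the organizational cost of recasting the whole theorem as an induction on $n$.
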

    \begin{proof}[Proof of Claim~\ref{CLAIM:Mantel-L1-min-triple-deg}]
        Suppose to the contrary that $d_{G}(x) + d_{G}(y) + d_{G}(z) \le 5n$. 
        Let us label vertices in $V_1, V_2, V_3$ by $V_1 = \{x_1, \ldots, x_{n}\}$, $V_2 = \{y_1, \ldots, y_n\}$, and $V_3 = \{z_1, \ldots, z_n\}$ with $(x,y,z) = (x_1, y_1, z_1)$. 
        Since 
        \begin{align*}
            \sum_{i\in [n]} \big( d_{G}(x_{i}) + d_{G}(y_{i}) + d_{G}(z_{i}) \big)
            = 2|G|
            \ge 5n^2 + 10n, 
        \end{align*}
        by averaging, there exists a triple $(x_{i_0}, y_{i_0}, z_{i_0})$ such that 
        \begin{align*}
            d_{G}(x_{i_0}) + d_{G}(y_{i_0}) + d_{G}(z_{i_0})
            \ge 5n + 10. 
        \end{align*}
        Let $\tilde{G}$ be the graph obtained from $G$ by first removing all edges incident to $\{x_1, y_1, z_1\}$, and then symmetrizing $x_1$ to $x_{i_0}$, $y_1$ to $y_{i_0}$, and $z_1$ to $z_{1_0}$. 
        Note that $\tilde{G}$ remains cyclically triangle-free. 
        However, 
        \begin{align*}
            |\tilde{G}|
            & \ge |G| - \big( d_{G}(x) + d_{G}(y) + d_{G}(z) \big) + \big( d_{G}(x_{i_0}) -3 + d_{G}(y_{i_0})-3 + d_{G}(z_{i_0})-3 \big) \\
            & \ge |G| - 5n + (5n+10-9)
            > |G|, 
        \end{align*}
        contradicting the maximality of $G$. 
    \end{proof}%CLAIM

    \begin{claim}\label{CLAIM:G-L1-no-cycle}
        The mixed graph $\vec{G}$ does not contain any directed cycle. 
        Consequently, every directed walk in $\vec{G}$ is a directed path. 
    \end{claim}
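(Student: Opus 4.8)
The plan is to argue by contradiction. Suppose $\vec G$ contains a directed cycle and pick a shortest one, say $C$. By Fact~\ref{FACT:symmetrized-G}~\ref{FACT:symmetrized-G-c}, $C$ is distinct. Since the only oriented edges of $\vec G$ run $V_1\to V_2\to V_3\to V_1$, any directed cycle passes through the three parts in cyclic order, so its length is a multiple of $3$; listing $C$ starting from one of its vertices in $V_1$, we may write
\[
C = x_1y_1z_1x_2y_2z_2\cdots x_ky_kz_k
\]
for some $k\ge 1$, with $(x_i,y_i,z_i)\in V_1\times V_2\times V_3$ and with the edge closing the cycle directed from $z_k$ to $x_1$. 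I would then distinguish two cases according to $k$.

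If $k=1$, the cycle is $x_1\to y_1\to z_1\to x_1$, so $\{x_1,y_1,z_1\}$ spans the edges $x_1y_1$, $y_1z_1$, $z_1x_1$ and is therefore a triangle of type $V_1V_2V_3$; this contradicts the standing hypothesis $\rho_3(G)=0$. If $k\ge 2$, I would instead view $C$ as a distinct directed path $W=x_1y_1z_1\cdots x_ky_kz_k$. The closing edge $(z_k,x_1)\in\vec G$ together with Fact~\ref{FACT:symmetrized-G-T3-free}~\ref{FACT:symmetrized-G-T3-free-a} forces $N^{-}_G(x_1)=[z_k]$, so $W$ satisfies the hypothesis of Lemma~\ref{LEMMA:deg-sum-distinct-walk}, which yields $\sum_{i\in[k]}\bigl(d_G(x_i)+d_G(y_i)+d_G(z_i)\bigr)\le 3(k+1)n$. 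On the other hand, applying Claim~\ref{CLAIM:Mantel-L1-min-triple-deg} to each of the $k$ triples $(x_i,y_i,z_i)$ shows this same sum is strictly larger than $5kn$. Comparing the two estimates gives $5kn<3(k+1)n$, i.e.\ $2k<3$, which is impossible for $k\ge 2$. Either case produces a contradiction, so $\vec G$ is acyclic.

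For the final assertion, I would note that a directed walk which is not a directed path must visit some vertex twice, and the stretch of the walk between two consecutive visits to that vertex is a closed directed walk of positive length; every such closed walk contains a directed cycle, which has just been excluded. The substantive input here is entirely carried by Lemma~\ref{LEMMA:deg-sum-distinct-walk} and Claim~\ref{CLAIM:Mantel-L1-min-triple-deg}: once those are in hand, the $k\ge 2$ case is a one-line arithmetic comparison and the $k=1$ case is immediate from cyclic triangle-freeness. The only place that calls for a bit of care — though not a genuine obstacle — is the bookkeeping needed to invoke the lemma: checking that the shortest cycle may be taken distinct, and that its closing edge supplies precisely the neighborhood condition $N^{-}_G(x_1)=[z_k]$ required in the lemma's hypothesis.
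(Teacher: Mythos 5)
Your proposal is correct and follows essentially the same route as the paper: take a shortest directed cycle, note it is distinct and has $k\ge 2$ blocks by cyclic triangle-freeness, apply Lemma~\ref{LEMMA:deg-sum-distinct-walk}, and contradict Claim~\ref{CLAIM:Mantel-L1-min-triple-deg}. The only differences are cosmetic — you sum the degree lower bound over all $k$ triples rather than averaging to exhibit a single violating triple, and you verify the hypothesis $N^{-}_{G}(x_1)=[z_k]$ explicitly (which the paper leaves implicit), both of which are fine.
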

    \begin{proof}[Proof of Claim~\ref{CLAIM:G-L1-no-cycle}]
        Suppose to the contrary that this claim fails.  
        Let $C = x_1 y_1 z_1 \cdots x_k y_k z_k$ be a shortest directed cycle in $\vec{G}$. By relabeling the vertices of $C$ if necessary, we may assume that $(x_i, y_i, z_i) \in V_1 \times V_2 \times V_3$ for $i \in [k]$. 
        Since $G$ does not contain triangles of type $V_1 V_2 V_3$, we have $k \ge 2$. 
        Moreover, because $C$ s chosen to be shortest, it must be distinct (otherwise, we obtain a shorter directed cycle). 

        Applying Lemma~\ref{LEMMA:deg-sum-distinct-walk} to the path $x_1 y_1 z_1 \cdots x_k y_k z_k$, we obtain 
        \begin{align*}
            \sum_{i \in [k]} \big(d_{G}(x_i) + d_{G}(y_i) + d_{G}(z_i)\big)
            \le 3(k+1)n. 
        \end{align*}
        By averaging, there exists an index $i_{\ast} \in [k]$ such that 
        \begin{align*}
            d_{G}(x_{i_{\ast}}) + d_{G}(y_{i_{\ast}}) + d_{G}(z_{i_{\ast}})
            \le 3 \left(1 + \frac{1}{k}\right)n
            \le 3 \left(1 + \frac{1}{2}\right)n
            < 5n, 
        \end{align*}
        a contradiction to~\eqref{equ:G-L1-mindeg-assume}. 
    \end{proof}%CLAIM 

    Now fix a longest directed walk $P = w_1 \cdots w_m$ of $\vec{G}$. 
    It follows from Claim~\ref{CLAIM:G-L1-no-cycle} that $P$ is a path. 

    \begin{claim}\label{CLAIM:G-L1-longest-path}
        We have $m \ge 3$. 
    \end{claim}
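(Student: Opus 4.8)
The plan is to argue by contradiction: assume the longest directed walk of $\vec{G}$ has at most two vertices, that is $m \le 2$, and derive that $|G|$ is too small. (Here $m$ is well defined and finite, since by Claim~\ref{CLAIM:G-L1-no-cycle} every directed walk of $\vec G$ is a directed path, hence has at most $3n$ vertices.)

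The first observation is that $m \le 2$ prevents any vertex from being simultaneously the head of one oriented edge and the tail of another. Precisely, for each $i \in [3]$ let $T_i \subseteq V_i$ be the set of vertices incident to an edge of $G[V_i, V_{i+1}]$, and let $S_i \subseteq V_i$ be the set of vertices incident to an edge of $G[V_{i-1}, V_i]$. If some vertex $v \in V_i$ lay in $S_i \cap T_i$, then choosing $u \in N^{-}(v) \subseteq V_{i-1}$ and $w \in N^{+}(v) \subseteq V_{i+1}$ would give a directed walk $u\,v\,w$ on three vertices, contradicting $m \le 2$. Hence $S_i \cap T_i = \emptyset$ for every $i \in [3]$.

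Now I count edges. Every edge of $G[V_i, V_{i+1}]$ has its $V_i$-endpoint in $T_i$ and its $V_{i+1}$-endpoint in $S_{i+1}$, so $|G[V_i, V_{i+1}]| \le |T_i| \cdot |S_{i+1}| \le |T_i|\,(n - |T_{i+1}|)$, using $S_{i+1} \subseteq V_{i+1}\setminus T_{i+1}$ and $|V_{i+1}| = n$. Writing $t_i \coloneqq |T_i| \in [0,n]$ and summing over $i \in [3]$,
\begin{align*}
    \sum_{i \in [3]} |G[V_i, V_{i+1}]|
    \le \sum_{i \in [3]} t_i (n - t_{i+1})
    = n(t_1 + t_2 + t_3) - (t_1 t_2 + t_2 t_3 + t_3 t_1).
\end{align*}
The right-hand side is affine in each $t_i$ with the others fixed, so it attains its maximum over the cube $[0,n]^3$ at a vertex of the cube; checking the eight vertices shows this maximum equals $n^2$. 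Combining this with $\sum_{i \in [3]} |G[V_i]| \le 3\binom{n}{2}$, we obtain
\begin{align*}
    |G|
    = \sum_{i \in [3]} |G[V_i]| + \sum_{i \in [3]} |G[V_i, V_{i+1}]|
    \le 3\binom{n}{2} + n^2
    = \frac{5n^2}{2} - \frac{3n}{2}
    < \frac{5n^2}{2} + 5n,
\end{align*}
contradicting the standing assumption $|G| > 5n^2/2 + 5n$. Therefore $m \ge 3$.

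I do not expect a genuine obstacle here: the entire content is the elementary optimization of a quadratic over a cube together with the structural remark that the absence of a directed path on three vertices decouples ``senders'' from ``receivers'' inside each part. The only points requiring a little care are the orientation bookkeeping (keeping straight which of $V_{i-1}$, $V_{i+1}$ hosts in- versus out-neighbors) and the implicit appeal to Claim~\ref{CLAIM:G-L1-no-cycle} to ensure that a longest directed walk exists at all.
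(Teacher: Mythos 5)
Your proof is correct, but it takes a genuinely different route from the paper. The paper splits into $m=1$ (trivial count) and $m=2$, and for $m=2$ it argues locally: it picks the directed edge $w_1w_2$, an arbitrary $w_3\in V_3$, and uses the locally symmetrized structure (Fact~\ref{FACT:symmetrized-G}, Fact~\ref{FACT:symmetrized-G-T3-free}) to show $d_G(w_1)+d_G(w_2)+d_G(w_3)<5n$, contradicting the minimum triple-degree bound of Claim~\ref{CLAIM:Mantel-L1-min-triple-deg}, which itself rests on the edge-maximality of $G$. You instead give a global count: if no directed walk on three vertices exists, the ``tail'' set $T_i$ and ``head'' set $S_i$ inside each $V_i$ are disjoint, so the cross edges number at most $\sum_i t_i(n-t_{i+1})\le n^2$ (the multilinear optimization over the cube is checked correctly; the maximum over the eight vertices is indeed $n^2$), giving $|G|\le 3\binom{n}{2}+n^2=\tfrac{5n^2}{2}-\tfrac{3n}{2}$, against the standing assumption $|G|>\tfrac{5n^2}{2}+5n$. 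Your argument handles $m=1$ and $m=2$ uniformly and is more elementary and self-contained: it needs neither Claim~\ref{CLAIM:Mantel-L1-min-triple-deg}, nor local symmetrization, nor cyclic triangle-freeness, only the orientation of cross edges and the assumed lower bound on $|G|$ (Claim~\ref{CLAIM:G-L1-no-cycle} is invoked only to make the longest walk well defined, as in the paper). What the paper's degree-based route buys is uniformity with the remainder of the proof of Theorem~\ref{THM:3-colored-Mantel-L1-norm-reduced}, where the final contradiction at the end of the longest path is again obtained from triple degree sums; your bound is in fact slightly stronger than needed, showing $m\le 2$ forces $|G|$ below $\tfrac{5n^2}{2}$ already.
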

    \begin{proof}[Proof of Claim~\ref{CLAIM:G-L1-longest-path}]
        Suppose to the contrary that $m \le 2$. 
        If $m = 1$, then $G[V_i, V_j] = \emptyset$ for every $\{i,j\} \in \tbinom{[3]}{2}$. In this case,
        \begin{align*}
            |G|
            = |G[V_1]| + |G[V_2]| + |G[V_3]|
            \le 3 \binom{n}{2}
            < \frac{5 n^2}{2} + 5 n,  
        \end{align*}
        a contradiction. 
        Thus, we may assume that $m = 2$. 
        
        By symmetry, we may assume that $w_1 \in V_1$ and $w_2 \in V_2$. 
        Fix an arbitrary vertex $w_3 \in V_3$. 
        It follows from the maximality of $P$ that 
        \begin{align*}
            N^{-}(w_1) = N^{+}(w_2) = \emptyset
            \quad\text{and}\quad 
            N^{+}(w_1) \cap N^{-}(w_3) = \emptyset. 
        \end{align*}
        Combining it with Fact~\ref{FACT:symmetrized-G}~\ref{FACT:symmetrized-G-b} and Fact~\ref{FACT:symmetrized-G-T3-free}~\ref{FACT:symmetrized-G-T3-free-a}, we obtain 
        % On the other hand, it follows from Fact~\ref{FACT:symmetrized-G}~\ref{FACT:symmetrized-G-b} that 
        % \begin{align*}
        %     N^{\mathrm{int}}(w_i) = n - |[w_i]|
        %     \quad\text{for}\quad i \in [3]. 
        % \end{align*}
        % %
        % In addition, it follows from Fact~\ref{FACT:symmetrized-G-T3-free}~\ref{FACT:symmetrized-G-T3-free-a} that 
        % \begin{align*}
        %     N^{-}(w_2) = [w_1]. 
        % \end{align*}
        %
        \begin{align*}
            % d(w_1) + d(w_2) + d(w_3)
            \sum_{i \in [3]} d_{G}(w_i)
            & = \sum_{i \in [3]} \left(|N^{-}(w_i)| + |N^{\mathrm{int}}(w_i)| + |N^{+}(w_i)|\right) \\
            & \le \big( n - |[w_1]| + |N^{+}(w_1)| \big) + \big( |[w_1]| + n - |[w_2]| \big) + \big( |N^{-}(w_3)| + 2n \big) \\[0.5em]
            & \le 4n + |N^{+}(w_1)| + |N^{-}(w_3)| - |[w_2]|
            \le 5n - |[w_2]|
            < 5n, 
        \end{align*}
        a contradiction to~\eqref{equ:G-L1-mindeg-assume}. 
    \end{proof}%CLAIM

    We now consider the last three vertices $\{w_{m-2}, w_{m-1}, w_{m}\}$ of the path $P$. 
    By symmetry, we may assume that $(w_{m-2}, w_{m-1}, w_{m}) \in V_1 \times V_2 \times V_3$. 
    For convenience, set $(x,y,z) \coloneqq (w_{m-2}, w_{m-1}, w_{m})$. 
    Since $P$ is a longest directed path, we have $N^{+}(z) = \emptyset$. 
    Moreover, as $(x,y)$ is a directed edge, it follows from Fact~\ref{FACT:symmetrized-G-T3-free}~\ref{FACT:symmetrized-G-T3-free-b} that $|N^{-}(x)| + |N^{+}(y)| \le |V_3| = n$. 
    Therefore, 
    \begin{align*}
        &  d_{G}(x) + d_{G}(y) + d_{G}(z) \\[0.3em]
        % & = \sum_{x\in S} \left(|N^{-}(x)| + |N^{\mathrm{int}}(x)| + |N^{+}(x)|\right) \\
        % & = \sum_{x\in S} \left(|N^{-}(x)| + n - |[x]| + |N^{+}(x)|\right) \\
        & \qquad \le \big( |N^{-}(x)| + n - |[x]| + n \big)   + \big( |[x]| + n - |[y]| + |N^{+}(y)| \big) + \big( |[y]| + n \big) \\[0.3em]
        & \qquad = 4n + |N^{-}(x)| + |N^{+}(y)|
        \le 5n, 
    \end{align*}
    a contradiction to~\eqref{equ:G-L1-mindeg-assume}. 
    This completes the proof of Theorem~\ref{THM:3-colored-Mantel-L1-norm}. 
\end{proof}%THM

%%%%%%%%%%%%%%%%%%%%%%%%%%%%%%%%%%%%%%%%%%%%%%%%%%%%%%%%%%
\subsection{Proof of Theorem~\ref{THM:3-colored-Mantel-stability}}\label{SUBSEC:proof-3-colored-Mantel-stability}
In this subsection, we present the proof of Theorem~\ref{THM:3-colored-Mantel-stability}. We will use the classical result of Bollob{\'a}s--Erd{\H o}s--Straus~\cite{BES74}, together with a stability version recently established in~\cite{CLY25}. For our purposes, we state slightly modified variants of these results, which follow straightforwardly from the Removal Lemma.

\begin{theorem}[\cite{BES74} and {\cite[Theorem~1.5]{CLY25}}]\label{THM:3-partite-Mantel}
    For every $\xi > 0$ there exist $\delta_{\ref{THM:3-partite-Mantel}} =\delta_{\ref{THM:3-partite-Mantel}}(\xi) > 0$ and $N_{\ref{THM:3-partite-Mantel}} = N_{\ref{THM:3-partite-Mantel}}(\xi)$ such that the following holds for every $n \ge N_{\ref{THM:3-partite-Mantel}}$. 
    Let $U_1, U_2, U_3$ be three pairwise disjoint sets and $H$ be a $3$-partite graph with parts $U_1$, $U_2$, and $U_3$.
    Suppose that $\max_{i \in [3]}\left| |U_i| - n \right| \le \delta_{\ref{THM:3-partite-Mantel}} n$ and the number of $K_{3}$ in $H$ is at most $\delta_{\ref{THM:3-partite-Mantel}} n^3$. 
    Then 
    \begin{align*}
        |H| \le 2n^2 + \xi n^2. 
    \end{align*}
    Moreover, if $|H| \ge 2n^2 - \delta_{\ref{THM:3-partite-Mantel}} n^2$, then there exist $i \in [3]$ and a partition $U_{i1} \cup U_{i2} = U_i$ such that 
    \begin{align*}
        | H \triangle K[U_{i1} \cup U_{i+1}, U_{i2} \cup U_{i+2}] |
        \le \xi n^2. 
    \end{align*}
\end{theorem}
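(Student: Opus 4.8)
The plan is to deduce both parts of Theorem~\ref{THM:3-partite-Mantel} from the exact Bollob\'as--Erd\H{o}s--Straus theorem~\cite{BES74} and its stability refinement~{\cite[Theorem~1.5]{CLY25}}, bridging the gap between ``at most $\delta_{\ref{THM:3-partite-Mantel}} n^3$ triangles'' and ``triangle-free'' by means of the triangle removal lemma~\cite{RS78}. First I would normalize the part sizes: padding the smaller parts with isolated vertices, we may assume $|U_1| = |U_2| = |U_3| = m$ with $n \le m \le (1+\delta_{\ref{THM:3-partite-Mantel}}) n$, which changes neither $|H|$ nor the number of copies of $K_3$ in $H$; since $m^2 = (1+o(1)) n^2$ as $\delta_{\ref{THM:3-partite-Mantel}} \to 0$, it suffices to prove both bounds with $n$ replaced by $m$ and with the error terms slightly enlarged. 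Then, as $H$ has at most $\delta_{\ref{THM:3-partite-Mantel}} m^3$ triangles, the removal lemma supplies a set of at most $\xi_1 m^2$ edges of $H$ whose deletion yields a triangle-free ($3$-partite) subgraph $H'$, where $\xi_1 = \xi_1(\delta_{\ref{THM:3-partite-Mantel}}) \to 0$ as $\delta_{\ref{THM:3-partite-Mantel}} \to 0$.

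For the first statement, apply the exact Bollob\'as--Erd\H{o}s--Straus bound to the triangle-free $3$-partite graph $H'$ with all parts of size $m$: this gives $|H'| \le 2m^2 + O(1)$, whence $|H| \le |H'| + \xi_1 m^2 \le 2m^2 + \xi_1 m^2 + O(1) \le 2n^2 + \xi n^2$ once $\delta_{\ref{THM:3-partite-Mantel}}$ is small (so that $\xi_1$ and the $m$-versus-$n$ discrepancy are each below $\xi/3$, say) and $n$ is large. For the second statement, suppose $|H| \ge 2n^2 - \delta_{\ref{THM:3-partite-Mantel}} n^2$; then $|H'| \ge 2m^2 - \xi_2 m^2$ for some $\xi_2 \to 0$ as $\delta_{\ref{THM:3-partite-Mantel}} \to 0$, so~{\cite[Theorem~1.5]{CLY25}} produces an index $i \in [3]$ and a partition $U_{i1} \cup U_{i2} = U_i$ with $|H' \triangle K[U_{i1} \cup U_{i+1}, U_{i2} \cup U_{i+2}]| \le \xi_3 m^2$, where $\xi_3 \to 0$ as $\xi_2 \to 0$. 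Since $H$ and $H'$ differ in at most $\xi_1 m^2$ edges, and discarding the padding vertices alters any symmetric-difference count by at most $O(\delta_{\ref{THM:3-partite-Mantel}} n^2)$, the triangle inequality for $\triangle$ gives $|H \triangle K[U_{i1} \cup U_{i+1}, U_{i2} \cup U_{i+2}]| \le (\xi_1 + \xi_3 + O(\delta_{\ref{THM:3-partite-Mantel}})) n^2 \le \xi n^2$ for suitably small $\delta_{\ref{THM:3-partite-Mantel}}$ and large $n$.

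The only real work is the choice of constants, which must be made in the right order: fix $\xi$, then pick the stability tolerance required by~\cite{CLY25}, then choose $\xi_1$ small enough that the removal lemma delivers it, then $\delta_{\ref{THM:3-partite-Mantel}}$ small enough to realize that $\xi_1$ and to absorb the part-size slack, and finally $N_{\ref{THM:3-partite-Mantel}}$ large enough for the removal lemma and for~\cite{BES74,CLY25} to be applicable; there is no structural obstacle beyond this bookkeeping. Alternatively, one could bypass the removal lemma and run a direct symmetrization/stability argument in the spirit of the proof of Theorem~\ref{THM:3-colored-Mantel-L1-norm}, but the route above is shorter.
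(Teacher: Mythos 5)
Your proposal is correct and coincides with the paper's own treatment: the paper gives no separate proof of Theorem~\ref{THM:3-partite-Mantel}, stating only that these variants of \cite{BES74} and {\cite[Theorem~1.5]{CLY25}} ``follow straightforwardly from the Removal Lemma,'' and your argument (pad the parts, remove $o(n^2)$ edges to reach a triangle-free $3$-partite graph, apply the exact and stability results, transfer back via the triangle inequality for symmetric differences) is precisely that routine reduction written out.
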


\begin{proof}[Proof of Theorem~\ref{THM:3-colored-Mantel-stability}]
    Fix $\varepsilon > 0$. 
    Let $\delta, \delta_{1}, \delta_{2} > 0$ be sufficiently small such that $\delta \ll \delta_{1} \ll \delta_{2} \ll \varepsilon$. 
    Let $n$ be sufficiently large.
    Let $V_1, V_2, V_3$ be pairwise disjoin sets satisfying $\max_{i \in [3]}\left| |V_i| - n \right| \le \delta n$. 
    Suppose that $G$ satisfies $\rho_{3}(G) \le \delta$ and 
    \begin{align}\label{equ:THM:3-colored-Mantel-stability}
        |G| + |G[V_1, V_2, V_3]| 
        \ge \frac{9 n^2}{2} - \delta n^2. 
    \end{align}
    We can choose $\delta$ sufficiently small such that, by Theorems~\ref{THM:3-colored-Mantel-L1-norm} and~\ref{THM:3-partite-Mantel}, 
    \begin{align}\label{equ:Mantel-stab-3upper-bounds}
        \max\left\{ |G| - \frac{5n^2}{2},~|G[V_1, V_2, V_3]| - 2n^2 \right\}
        \le \delta_{1}n^2,
    \end{align}
    and moreover, such that, by the Removal Lemma, there exists a subgraph $\tilde{G} \subseteq G$ with $\rho_{3}(\tilde{G}) = 0$ and $|\tilde{G}| \ge |G| - \delta_1 n^2$. 
    Combining this with~\eqref{equ:THM:3-colored-Mantel-stability} and~\eqref{equ:Mantel-stab-3upper-bounds}, we obtain 
    \begin{align}\label{equ:Mantel-stab-3lower-bounds}
        \min\left\{ |\tilde{G}| - \frac{5n^2}{2},~|\tilde{G}[V_1, V_2, V_3]| - 2n^2 \right\}
        \ge -3\delta_{1}n^2.
    \end{align}
    Applying Theorem~\ref{THM:3-partite-Mantel} to the $3$-partite graph $\tilde{G}[V_1, V_2, V_3]$, we conclude that there exist $i \in [3]$ and a partition $V_{i1} \cup V_{i2} = V_i$ such that 
    \begin{align}\label{equ:Mantel-stab-sym-diff}
        |\tilde{G}[V_1, V_2, V_3] \triangle K[V_{i1} \cup V_{i+1}, V_{i2} \cup V_{i+2}]|
        \le \delta_{2} n^2. 
    \end{align}
    By symmetry, we may assume that $i = 1$.

    \begin{claim}\label{CLAIM:K43-L1-stab-inside-edges}
        The following statements hold. 
        \begin{enumerate}[label=(\roman*)]
            \item\label{CLAIM:K43-L1-stab-inside-edges-1} $\max\left\{|\tilde{G}[V_{11}, V_{2}]|,~|\tilde{G}[V_{12}, V_{3}]|,~|\tilde{G}[V_{2}]|,~|\tilde{G}[V_{12}]|\right\} \le \delta_{2} n^2$. 
            \item\label{CLAIM:K43-L1-stab-inside-edges-2} $|V_{11}| \le \delta_{2}^{1/2} n$ or $|\tilde{G}[V_{3}]| \le \delta_{2}^{1/2} n^2$. 
        \end{enumerate}
    \end{claim}
    \begin{proof}[Proof of Claim~\ref{CLAIM:K43-L1-stab-inside-edges}]
        % First, we prove Claim~\ref{CLAIM:K43-L1-stab-inside-edges}~\ref{CLAIM:K43-L1-stab-inside-edges-1}. 
        Let $M \coloneqq K[V_{11} \cup V_2, V_{12} \cup V_3] \setminus \tilde{G}[V_1, V_2, V_3]$. 
        It follows from~\eqref{equ:Mantel-stab-sym-diff} that $|M| \le \delta_{2} n^2$. 
        First, we prove that $|\tilde{G}[V_{11}, V_{2}]| \le \delta_{2} n^2$. 
        Let 
        \begin{align*}
            \mathcal{T}
            \coloneqq \big\{\{u,v,w\} \colon \text{$uv \in \tilde{G}[V_{11}, V_{2}]$, $w \in V_3$, and $\{uw, vw\} \cap M \neq \emptyset$} \big\}. 
        \end{align*}
        Fix $uv \in \tilde{G}[V_{11}, V_{2}]$. 
        Since $\tilde{G}$ does not contain triangles of type $V_1V_2V_3$, for every $w\in V_3$ we have $\{uw, vw\} \cap M \neq \emptyset$. 
        It follows that $|\mathcal{T}| \ge |\tilde{G}[V_{11}, V_{2}]| |V_{3}|$. 
        On the other hand, every pair $uw \in M$ is contained in at most $n$ members of $\mathcal{T}$. 
        Thus, we have $|\mathcal{T}| \le n|M|$. 
        Combining the two bounds, we obtain 
        \begin{align*}
            |\tilde{G}[V_{11}, V_{2}]| 
            \le \frac{n |M|}{|V_{3}|}
            = |M|
            \le \delta_{2} n^2. 
        \end{align*}
        The proofs for the bounds on $|\tilde{G}[V_{12}, V_{3}]|$, $|\tilde{G}[V_2]|$ (here we use the assumption that $\tilde{G}$ contains no triangles of type $V_2V_2V_3$), $|\tilde{G}[V_{12}]|$ (here we use the assumption that $\tilde{G}$ contains no triangles of type $V_1V_1V_2$), and for Claim~\ref{CLAIM:K43-L1-stab-inside-edges}~\ref{CLAIM:K43-L1-stab-inside-edges-2} (here we use the assumption that $\tilde{G}$ contains no triangles of type $V_3V_3V_1$) are similar, thus we omit them here. 
    \end{proof}%CLAIM

    % It follows from Claim~\ref{CLAIM:K43-L1-stab-inside-edges}~\ref{CLAIM:K43-L1-stab-inside-edges-1} that 
    % \begin{align*}
    %     |\tilde{G}[V_1]| + |\tilde{G}[V_2]| + |\tilde{G}[V_{3}]|
    %     & = |\tilde{G}[V_{11}]| + |\tilde{G}[V_{11}, V_{12}]| + \tilde{G}[V_{12}] + |\tilde{G}[V_2]| + |\tilde{G}[V_{3}]| \\
    %     & \le \binom{|V_1|}{2} - \binom{|V_{12}|}{2} + 3\delta_{2} n^2
    %     \le \frac{n^2}{2} + 3\delta_{2} n^2 - \frac{|V_{12}|^2}{2}. 
    % \end{align*}
    %
    Suppose that $|V_{11}| \le \delta_{2}^{1/2} n$. 
    Then it follows from Claim~\ref{CLAIM:K43-L1-stab-inside-edges}~\ref{CLAIM:K43-L1-stab-inside-edges-1} that 
    \begin{align*}
        & |\tilde{G}[V_1]| + |\tilde{G}[V_2]| + |\tilde{G}[V_1, V_{3}]| \\
        % & = |\tilde{G}[V_1]| + |\tilde{G}[V_2]| + |\tilde{G}[V_{11}, V_{2}]| + |\tilde{G}[V_{12}, V_{2}]| \\
        & \qquad = |\tilde{G}[V_{11}]| + |\tilde{G}[V_{11}, V_{12}]| + |\tilde{G}[V_{12}]| + |\tilde{G}[V_2]| + |\tilde{G}[V_{11}, V_{3}]|  + |\tilde{G}[V_{12}, V_{3}]| \\
        & \qquad \le \tbinom{|V_{11}|}{2} + |V_{11}| n + \delta_{2} n^2 + \delta_{2} n^2 + |V_{11}|n + \delta_{2} n^2 
        \le 4\delta_2 n^2 + 2\delta_2^{1/2} n^2
        \le \varepsilon n^2, 
    \end{align*}
    as desired. 

    Suppose that $|V_{11}| > \delta_{2}^{1/2} n$.
    Then it follows from Claim~\ref{CLAIM:K43-L1-stab-inside-edges}~\ref{CLAIM:K43-L1-stab-inside-edges-2} that $|\tilde{G}[V_{3}]| \le \delta_{2}^{1/2} n^2$. 
    Combining this with Claim~\ref{CLAIM:K43-L1-stab-inside-edges}~\ref{CLAIM:K43-L1-stab-inside-edges-1}, we obtain  
    \begin{align}\label{equ:Mantel-stab-V1V2V2}
        & |\tilde{G}[V_1]| + |\tilde{G}[V_2]| + |\tilde{G}[V_{3}]| \notag \\
        & \qquad = |\tilde{G}[V_{11}]| + |\tilde{G}[V_{11}, V_{12}]| + \tilde{G}[V_{12}] + |\tilde{G}[V_2]| + |\tilde{G}[V_{3}]| \notag \\
        & \qquad \le \tbinom{|V_1|}{2} - \tbinom{|V_{12}|}{2} + \delta_{2} n^2 + \delta_{2} n^2 + \delta_{2} n^2
        \le \frac{n^2}{2}   + 3\delta_{2}^{1/2} n^2 - \frac{|V_{12}|^2}{2}. 
    \end{align}
    On the other hand, it follows from~\eqref{equ:Mantel-stab-3upper-bounds} and~\eqref{equ:Mantel-stab-3lower-bounds} 
    \begin{align*}
        |\tilde{G}[V_1]| + |\tilde{G}[V_2]| + |\tilde{G}[V_{3}]|
        & = |\tilde{G}| - |\tilde{G}[V_1, V_2, V_3]| \\
        & \ge \frac{5}{2} n^2 - 3\delta_{1} n^2 - \left(2n^2+\delta_{1}n^2)\right)
        \ge \frac{1}{2} n^2 - 4\delta_{1} n^2. 
    \end{align*}
    Combining this with~\eqref{equ:Mantel-stab-V1V2V2}, we obtain 
    \begin{align*}
        \frac{|V_{12}|^2}{2} 
        \le 3\delta_{2}^{1/2} n^2 + 4\delta_{1} n^2 
        \le 4\delta_{2}^{1/2} n^2, 
    \end{align*}
    which implies that $|V_{12}| \le 2\sqrt{2}\delta_{2}^{1/4} n$.
    Consequently, 
    \begin{align*}
        & |\tilde{G}[V_{2}]| + |\tilde{G}[V_{3}]| + |\tilde{G}[V_1, V_2]| \\
        & \qquad = |\tilde{G}[V_{2}]| + |\tilde{G}[V_{3}]| + |\tilde{G}[V_{11}, V_2]| + |\tilde{G}[V_{12}, V_2]| \\
        & \qquad = \delta_{2} n^2 + \delta_{2}^{1/2} n^2 + \delta_{2} n^2 +  2\sqrt{2}\delta_{2}^{1/4} n^2 
        \le \varepsilon n^2, 
    \end{align*}
    as desired.     
    This completes the proof of Theorem~\ref{THM:3-colored-Mantel-stability}.  
\end{proof}%THM

%%%%%%%%%%%%%%%%%%%%%%%%%%%%%%%%%%%%%%%%%%%%%%%%%%%%%%%%%%
\subsection{Proof of Theorem~\ref{THM:3-colored-Mantel-L2-norm}}\label{SUBSEC:proof-3-colored-Mantel=L2}
In this section, we present the proof of Theorem~\ref{THM:3-colored-Mantel-L2-norm}, derived by computer using the flag algebra method of Razborov~\cite{Raz07}, which is also described in e.g.~\cite{Razborov10,BT11,SFS16,GilboaGlebovHefetzLinialMorgenstein22}.
Since this method is well-known by now, we will be very brief. In particular,  we omit many definitions, referring
the reader to~\cite{Raz07,Razborov10} for any missing notions. Roughly speaking, a \emph{flag algebra proof using $0$-flags on $m$ vertices} of an upper bound $u\in\mathbb{R}$ on the given objective function $f$ consists of an identity 
\begin{align*}
    u-f(\mathcal{H})
    = \mathrm{SOS}+\sum_{F\in\mathcal{F}_m^0}c_F \cdot p(F,\mathcal{H})+o(1),
\end{align*}
which is asymptotically true for any admissible $\mathcal{H}$ with $|V(\mathcal{H})|\to\infty$, where the $\mathrm{SOS}$-term can be represented as a sum of squares (as described e.g. in~{\cite[Section~3]{Razborov10}}), each coefficient $c_F\in\mathbb{R}$ is non-negative, and $\mathcal{F}_m^0$ consists of isomorphism types of \emph{$0$-flags} (unlabeled $3$-graphs) with $m$ vertices. If $f(\mathcal{H})$ can be represented as a linear combination of the densities of members of $\mathcal{F}_m^0$ in $\mathcal{H}$ then finding the smallest possible $u$ amounts to solving a semi-definite program (SDP) with $|\mathcal{F}_m^0|$ linear constraints (so we write the size of $\mathcal{F}_m^0$ in each case to give the reader some idea of the size of the programs that we had to solve).

We formed the corresponding SDPs and then analyzed the solutions returned by computer, using a modified version of the SageMath package, which can be found in the GitHub repository \href{https://github.com/bodnalev/sage}{\url{https://github.com/bodnalev/sage}}. 
The script together with the certificate can be found at \href{https://github.com/xliu2022/xliu2022.github.io/tree/main/FlagAlgebra_Certificate/ThreeColored-Mantel-L2norm}{\url{https://github.com/xliu2022/xliu2022.github.io/tree/main/FlagAlgebra_Certificate/ThreeColored-Mantel-L2norm}}.

\begin{proof}[Proof of Theorem~\ref{THM:3-colored-Mantel-L2-norm}]
    Suppose to the contrary that this theorem fails. 
    Then there exists a constant $\varepsilon > 0$, a decreasing infinite sequence $\left(\delta_i \right)_{i = 1}^{\infty}$ of positive real numbers with $\lim_{i \to \infty} \delta_i = 0$, and an increasing (i.e., the number of vertices is strictly increasing) infinite sequence  $\left(G_{i}\right)_{i=1}^{\infty}$ of graphs such that 
    \begin{enumerate}[label=(\roman*)]
        \item\label{equ:K43-flag-Vi-size} $G_i$ is a graph on $V^{i} = V_{1}^{i} \cup V_{2}^{i} \cup V_{3}^{i}$, where $V_{1}^{i}, V_{2}^{i}, V_{3}^{i}$ are pairwise disjoint and satisfy  
        \begin{align*}%\label{equ:K43-flag-Vi-size}
            \left| V_{j}^{i} - \tfrac{|V^{i}|}{3} \right|
            \le \delta_{i} |V^{i}|
            \quad\text{for}\quad j \in [3].
        \end{align*}
        \item\label{equ:K43-flag-local-max} $G_i$ is locally maximal with respect to $(V_{1}^{i}, V_{2}^{i}, V_{3}^{i})$, that is, up to a cyclic permutation of the sub-indices,
        \begin{align*}
            |G_{i}[V_1^{i}, V_2^{i}]| + |G_{i}[V_3^{i}]| & \ge |G_{i}[V_1^{i}, V_3^{i}]| + |G_{i}[V_1^{i}]|,  \\ 
            |G_{i}[V_2^{i}, V_3^{i}]| + |G_{i}[V_3^{i}]| & \ge |G_{i}[V_1^{i}, V_3^{i}]| + |G_{i}[V_2^{i}]|. 
        \end{align*}
        \item\label{equ:K43-flag-K3-density} $\rho_{3}(G_i)  \le \delta_{i}$.
        \item\label{equ:K43-flag-L2norm-density}  $\norm{G_i}_{2}  \ge (1/3 + \varepsilon) |V^{i}|^{3}$.
    \end{enumerate}

    We would like to run flag algebra calculations on the limit $\phi$ of the sequence $\left(G_{i}\right)_{i=1}^{\infty}$ in the theory of $3$-colored graphs, that is, we have three unary relations $U_1, U_2, U_3$ with the only restriction that each vertex in a flag satisfies exactly one of them, equivalently, these unitary relations encode a $3$-partition of the vertex set.

    For $i \in [3]$, let $v_i$ denote the untyped single-vertex flag whose vertex has color $i$. 
    Then, in the limit, part~\ref{equ:K43-flag-Vi-size} translates to the statement that
    \begin{align}\label{equ:K43-flag-Vi-size-phi}
        \phi(v_i) = 1/3
        \quad\text{for}\quad i \in [3]. 
    \end{align}
    For $(i,j) \in [3]^{2}$, let $E_{i,j}$ denote the untyped two-vertex flag with one vertex colored $i$ and the other vertex colored $j$. 
    Then, in the limit, part~\ref{equ:K43-flag-local-max} translates to the statement that
    \begin{equation}
    \begin{aligned}
    \label{equ:K43-flag-local-max-phi}
        \phi(E_{1,2}) + \phi(E_{3,3}) - \phi(E_{1,3}) - \phi(E_{1,1}) & \ge 0, \\
        \phi(E_{2,3}) + \phi(E_{3,3}) - \phi(E_{1,3}) - \phi(E_{2,3}) & \ge 0.
    \end{aligned}
    \end{equation}
    For $(i,j,k) \in [3]^{3}$, let $T_{i,j,k}$ denote the untyped three-vertex flag with one vertex colored $i$, one vertex colored $j$, and the remaining vertex colored $k$. 
    Then, in the limit, part~\ref{equ:K43-flag-K3-density} translates to the statement that
    \begin{align}\label{equ:K43-flag-K3-density-psi}
        \phi(T_{1,2,3}) + \phi(T_{1,1,2}) + \phi(T_{2,2,3}) + \phi(T_{3,3,1})
        = 0. 
    \end{align}
    Thus, we can restrict the flag algebra calculations to the theory of $\{T_{1,2,3},T_{1,1,2},T_{2,2,3},T_{3,3,1}\}$-free $3$-colored graphs.
    
    Let $S_{2}$ denote the two-edge star and $K_{3}$ denote the triangle. 
    For a graph $F$, let $\rho_{\mathrm{ind}}(F,G_i)$ denote the density of induced copies of $F$ in $G_i$, i.e., the probability that the induced subgraph of $G$ on a uniformly sampled $v(F)$-subset of $V(G)$ is isomorphic to $F$.
    By definition,
    \begin{align*}
        \norm{G_i}_{2}
        = \sum_{v\in V^{i}}d_{G_i}^{2}(v)
        & = \sum_{v\in V^{i}} \left( 2\tbinom{d_{G_i}(v)}{2} + d_{G_i}(v) \right) \\ 
        & = 2\big( \rho_{\mathrm{ind}}(S_{2},G_i) + 3 \rho_{\mathrm{ind}}(K_3,G_i) \big) \tbinom{|V^{i}|}{3} + 2|G_i|. 
    \end{align*} 
    Let $\mathcal{S}$ denote the collection of untyped three-vertex flag with exactly two edges and arbitrary vertex colors (there are $18$ such flags).
    Let $\mathcal{K}$ denote the collection of untyped three-vertex flag with three edges and arbitrary vertex colors. 
    Then, by the equality above, part~\ref{equ:K43-flag-L2norm-density} translates, in the limit, to the statement that
    \begin{align}\label{equ:K43-flag-L2norm-density-psi}
        \sum_{S\in \mathcal{S}}\phi(S) + 3 \sum_{K \in \mathcal{K}}\phi(K)
        \ge 1 + 3\varepsilon. 
    \end{align}

    We can now run the usual flag algebra calculations in the theory of $\{T_{1,2,3},T_{1,1,2},T_{2,2,3},T_{3,3,1}\}$-free $3$-colored graphs, where each of the inequalities in~\eqref{equ:K43-flag-Vi-size-phi} to~\eqref{equ:K43-flag-local-max-phi} can be multiplied by an unknown non-negative combination of respectively $0$-flags. 
    The final inequality should prove~\eqref{equ:K43-flag-L2norm-density-psi}.

    Our computation uses $5$-vertex flags (where $|\mathcal{F}_5^0|=1968$). 
    The output from the computer shows that 
    \begin{align*}
        \sum_{S\in \mathcal{S}}\phi(S) + 3 \sum_{K \in \mathcal{K}}\phi(K)
        \le 1, 
    \end{align*}
    which contradicts~\eqref{equ:K43-flag-L2norm-density-psi}. 
    This completes the proof of Theorem~\ref{THM:3-colored-Mantel-L2-norm}.
\end{proof}%THM

%%%%%%%%%%%%%%%%%%%%%%%%%%%%%%%%%%%%%%%%%%%%%%%%%%%%%%%%%%
\section{Proof of Theorem~\ref{THM:L2-exact-K43}}\label{SEC:proof-THM-L2-exact-K43}
%
%%%%%%%%%%%%%%%%%%%%%%%%%%%%%%%%%%%%%%%%%%%%%
\subsection{Preliminaries}\label{SEC:prelim}
Given pairwise disjoint sets $V_1, \ldots, V_{t}$, denote by $K[V_1, \ldots, V_{k}]$ the complete $k$-partite graph with parts $V_1, \ldots, V_{t}$. 
Denote by $\overline{K}[V_1, \ldots, V_{k}]$ the complement of $K[V_1, \ldots, V_{k}]$, that is, 
\begin{align*}
    \overline{K}[V_1, \ldots, V_{k}]
    \coloneqq \tbinom{V_1}{2} \cup \cdots \cup \tbinom{V_{k}}{2}. 
\end{align*}

Let $\mathcal{H}$ be a $3$-graph on vertex set $V$.
The \emph{shadow} of $\mathcal{H}$ is defined as 
\begin{align*}
    \partial\mathcal{H}
    \coloneqq \left\{e \in \tbinom{V}{2} \colon \text{there exists $E \in \mathcal{H}$ such that $e \subseteq E$} \right\}. 
\end{align*}
The \emph{link} of a vertex $v \in V$ in $\mathcal{H}$ is given by 
\begin{align*}
    L_{\mathcal{H}}(v)
    \coloneqq \big\{ e \in \partial\mathcal{H} \colon \{v\} \cup e \in \mathcal{H} \big\}. 
\end{align*}
The \emph{degree} of $v$ is $d_{\mathcal{H}}(v) \coloneqq |L_{\mathcal{H}}(v)|$. 
We use $\delta(\mathcal{H})$, $d(\mathcal{H})$, and $\Delta(\mathcal{H})$ to denote the \emph{minimum}, \emph{average}, and \emph{maximum} degree of $\mathcal{H}$, respectively.

The \emph{neighborhood} of $v$ in $\mathcal{H}$ is defined as 
\begin{align*}
    N_{\mathcal{H}}(v)
    \coloneqq \big\{u \in V \setminus \{v\} \colon \text{there exists $E \in \mathcal{H}$ such that $\{u,v\} \subseteq E$} \big\}.
\end{align*}
It follows from the definition that $u \in N_{\mathcal{H}}(v)$ if and only if $u \neq v$ and $d_{\mathcal{H}}(uv) \neq 0$. 
Moreover, for every $u \in V\setminus \{v\}$, the degree of $u$ in the link graph $L_{\mathcal{H}}(v)$ is the same as the codegree $d_{\mathcal{H}}(uv)$. 

For a $2$-subset $e \subseteq V$, the \emph{neighborhood} of $e$ in $\mathcal{H}$ is defined as 
\begin{align*}
    N_{\mathcal{H}}(e) 
    \coloneqq \big\{ v\in V \colon e \cup \{v\} \in \mathcal{H} \big\}.  
\end{align*}
Recall that the \emph{codegree} of $e$ in $\mathcal{H}$ is the same as the size of $N_{\mathcal{H}}(e)$.

For a vertex subset $S\subseteq V(\mathcal{H})$, we use $\mathcal{H}[S]$ to denote the \emph{induced subgraph} of $\mathcal{H}$ on $S$, and use $\mathcal{H} - S$ to denote the induced subgraph of $\mathcal{H}$ on $V(\mathcal{H})\setminus S$. 
If $S = \{v\}$ consists of a single vertex, then we write $\mathcal{H} - v$ instead of $\mathcal{H} - \{v\}$ for simplicity.

Following the definition in~\cite{CILLP24}, the \emph{$2$-norm degree} of a vertex $v\in V$ is defined as 
\begin{align*}
    s_{\mathcal{H}}(v)
    \coloneqq \norm{\mathcal{H}}_{2} - \norm{\mathcal{H} - v}_{2}.
\end{align*}
Straightforward calculations (see~{\cite[Lemma~3.1]{CILLP24}}) show that 
\begin{align}\label{equ:def-2norm-degree-b}
    s_{\mathcal{H}}(v)
    & = \norm{L_{\mathcal{H}}(v)}_{2} + 2 \sum_{e\in L_{\mathcal{H}}(v)} d_{\mathcal{H}}(e) - d_{\mathcal{H}}(v). 
\end{align}
Denote by $s(\mathcal{H})$ the \emph{average $2$-norm degree} of $\mathcal{H}$. 

For convenience, we will omit the subscript $\mathcal{H}$ from the above notations when it is clear from the context. 

For the remainder of this section, all indices are taken modulo $3$. 

The following statements can be verified by straightforward calculations (a proof of Fact~\ref{FACT:Cn-L2-max} is provided in the appendix of the arXiv version). 

\begin{fact}\label{FACT:Cn-L2-max}
    Let $n \ge 6$ be an integer, and let $V_1 \cup V_2 \cup V_3 = [n]$ be a partition of $[n]$. 
    Then 
    \begin{align*}
        \norm{\mathbb{C}[V_1, V_2, V_3]}_{2} \le \norm{\mathbb{C}_{n}}_{2}. 
    \end{align*}
\end{fact}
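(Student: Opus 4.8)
The plan is to reduce the claim to a one-variable optimization problem by writing $\norm{\mathbb{C}[V_1,V_2,V_3]}_2$ as an explicit polynomial in $a \coloneqq |V_1|$, $b \coloneqq |V_2|$, $c \coloneqq |V_3|$ with $a+b+c = n$, and then showing this polynomial is (Schur-)maximized at the balanced point $a = b = c = n/3$ (rounded appropriately). First I would compute, for each $2$-subset $e$, the codegree $d_{\mathbb{C}[V_1,V_2,V_3]}(e)$ according to which parts the two endpoints of $e$ lie in. A pair inside $V_i$ has codegree governed by the edge types $V_iV_iV_{i+1}$; a pair of type $V_iV_{i+1}$ has codegree coming from types $V_1V_2V_3$ together with the relevant ``doubled'' types; and a pair of type $V_iV_{i+2}$ similarly. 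This yields a finite list of codegree values, each a linear form in $a,b,c$, with explicit multiplicities (numbers of pairs of each type, i.e.\ $\binom{a}{2}$, $ab$, etc.). Summing the squares gives $\norm{\mathbb{C}[V_1,V_2,V_3]}_2 = P(a,b,c)$ for an explicit symmetric-up-to-cyclic-shift quartic polynomial $P$.

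Next I would argue that $P$ is maximized at the balanced partition. Because of the cyclic symmetry in the definition of $\mathbb{C}[V_1,V_2,V_3]$ (the roles of $V_1,V_2,V_3$ are interchanged by a cyclic shift, not by an arbitrary permutation), $P$ is invariant under cyclic permutations of $(a,b,c)$; I expect it is in fact fully symmetric after summation, since the four edge types $\{V_1V_2V_3, V_1V_1V_2, V_2V_2V_3, V_3V_3V_1\}$ distribute the ``doubled'' types evenly around the cycle. Granting symmetry, the cleanest route is to fix $a+b+c = n$ and show that moving two coordinates toward each other (say replacing $(a,b)$ by $(a',b')$ with $a'+b' = a+b$ and $|a'-b'| < |a-b|$) does not decrease $P$; i.e.\ $P$ is Schur-concave on the simplex. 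Equivalently, one checks that the restriction $t \mapsto P(a+t, b-t, c)$ is concave in $t$, which reduces to verifying that the coefficient of $t^2$ (a polynomial in $a,b,c,n$) is nonpositive on the relevant range $n \ge 6$; this is a routine but slightly lengthy calculation with the explicit $P$. Iterating the balancing moves drives $(a,b,c)$ to the most balanced integer partition, which is exactly (a permutation of) the part sizes in $\mathbb{C}_n$, giving $P(a,b,c) \le P(\lceil n/3\rceil, \lfloor (n+1)/3\rfloor, \lfloor n/3\rfloor) = \norm{\mathbb{C}_n}_2$.

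I expect the main obstacle to be the bookkeeping in the first step rather than any conceptual difficulty: getting every codegree exactly right (in particular being careful about the off-by-one terms, since a pair inside $V_i$ cannot use its own two vertices as the third vertex, and pairs across parts have codegrees like $a+b-?$ depending on type) and then carrying the resulting quartic through the concavity check without algebra errors. A secondary subtlety is handling the rounding: once concavity on the continuous simplex is established, one must check that among the finitely many integer points closest to $(n/3,n/3,n/3)$ the balanced one of $\mathbb{C}_n$ indeed wins — this follows from concavity plus symmetry but should be stated explicitly. If establishing full Schur-concavity in closed form proves too messy, a fallback is to use Lagrange multipliers / a direct second-derivative (Hessian) computation at the interior critical point $a=b=c$ to show it is the unique interior maximum, then compare boundary and near-boundary integer points separately; either way the computation is elementary and finite.
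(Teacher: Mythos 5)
Your reduction to an explicit quartic $P(a,b,c)$ is fine (it is essentially the paper's Fact~\ref{FACT:C123-2norm}), but the maximization step contains two claims that are actually false, and they are exactly the crux of the problem. First, $P$ is \emph{not} fully symmetric: $\mathbb{C}[V_1,V_2,V_3]$ has only cyclic symmetry, and so does $P$. Writing $|V_i|=x_in$, the leading term of $\norm{\mathbb{C}[V_1,V_2,V_3]}_2$ is $n^4\sum_{i}\bigl(x_ix_{i+1}(x_i+x_{i+2})^2+\tfrac12 x_i^2x_{i+1}^2\bigr)$, which at $(x_1,x_2,x_3)=(0.5,0.3,0.2)$ equals $\approx 0.155\,n^4$ but at the transposed point $(0.5,0.2,0.3)$ equals $\approx 0.149\,n^4$. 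Second, and more damagingly, Schur-concavity fails: balancing an arbitrarily chosen pair can strictly \emph{decrease} the norm. Take $x_3=0$; then the leading term reduces to $n^4\bigl(x_1^3x_2+\tfrac12x_1^2x_2^2\bigr)$, whose maximum on $x_1+x_2=1$ is at $x_1=1/\sqrt2\approx0.71$, not at $x_1=x_2$. Concretely, moving from part sizes proportional to $(0.6,0.4,0)$ to $(0.55,0.45,0)$ drops the leading coefficient from $\approx0.1152$ to $\approx0.1055$, so for large $n$ this ``balancing move'' strictly decreases $\norm{\cdot}_2$. Hence even if each restriction $t\mapsto P(a+t,b-t,c)$ is concave, without the (absent) reflection symmetry the maximum along such a line need not sit at the balanced point, and your iteration of balancing moves does not drive the partition to the balanced one while keeping $P$ non-decreasing; the final comparison $P(a,b,c)\le\norm{\mathbb{C}_n}_2$ is therefore not established. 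The fallback via an interior critical point plus ``compare boundary and near-boundary integer points separately'' is likewise not a proof as stated: uniqueness of the interior critical point is not obvious for a merely cyclically symmetric quartic, the statement must hold for every $n\ge6$ (so lower-order terms and integer rounding cannot be absorbed into an asymptotic argument), and the boundary comparison is precisely where the above counterexample lives.

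The paper's proof repairs exactly this point by making the exchange step direction-aware: assuming two part sizes differ by at least $2$, it moves a single vertex from the largest part into a specifically chosen part — into $V_3$ when $n_1\ge n_2\ge n_3$, and into $V_2$ when $n_1\ge n_3\ge n_2$ (in both cases from the largest into the smallest part, respecting the cyclic orientation) — and shows the explicit difference polynomial is strictly positive by monotonicity in $n_1$ together with finitely many extreme cases, valid for all $n\ge6$. If you want to salvage your outline, replace ``Schur-concavity for an arbitrary pair'' by such a cyclic-order-dependent exchange inequality (largest part into smallest part) and verify its positivity exactly, including the lower-order terms; an argument that ignores which pair is being balanced cannot work, as the $(0.6,0.4,0)$ example shows.
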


\begin{fact}\label{FACT:C123-2norm}
    Let $V_1, V_2, V_3$ be three pairwise disjoint sets with $|V_i| = x_i n$ for $i \in [3]$. 
    Then 
    \begin{multline}\label{equ:C123-2norm-exp}
        \norm{\mathbb{C}[V_1, V_2, V_3]}_{2} \\
        = \sum_{i\in [3]}\left(\tfrac{x_{i} x_{i+1} \left(2 \left(x_{i}+x_{i+2}\right)^2+x_{i} x_{i+1}\right)}{2} n^4 -\tfrac{x_{i} x_{i+1} \left(4 x_{i}+x_{i+1}+4 x_{i+2}\right)}{2} n^3 +  x_{i} x_{i+1} n^2\right).
    \end{multline}
    In particular, let $\delta \in (0,1/3)$ and suppose $n \ge \frac{9}{2\delta^2}$.  If $x_i \ge 1/3 - \delta$ for $i \in [3]$. Then 
    \begin{align*}
        \norm{\mathbb{C}[V_1, V_2, V_3]}_{2}
        \ge \frac{n^4}{6} - 2\delta n^4. 
    \end{align*}
\end{fact}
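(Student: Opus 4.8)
The plan is to establish the two assertions of the Fact separately: first derive the exact identity~\eqref{equ:C123-2norm-exp} by enumerating all codegrees of $\mathbb{C}[V_1,V_2,V_3]$, and then read off the asymptotic lower bound from it.

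For the identity, I would partition the pairs $e\in\binom{V}{2}$ according to which parts contain the two endpoints of $e$. Recall that the edges of $\mathbb{C}[V_1,V_2,V_3]$ are the rainbow triples of type $V_1V_2V_3$ together with, for each $i\in[3]$, the triples of type $V_iV_iV_{i+1}$ (indices mod $3$). Hence if $e\subseteq V_i$ then the only edges through $e$ are of type $V_iV_iV_{i+1}$, so $d_{\mathbb{C}}(e)=|V_{i+1}|$, and there are $\binom{|V_i|}{2}$ such pairs; and if $e$ has one endpoint in $V_i$ and one in $V_{i+1}$, then the edges through $e$ are the rainbow ones (third vertex in $V_{i+2}$) and those of type $V_iV_iV_{i+1}$ (third vertex in $V_i$, other than the given one), so $d_{\mathbb{C}}(e)=|V_{i+2}|+|V_i|-1$, and there are $|V_i||V_{i+1}|$ such pairs. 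Every cross pair is of this second kind for exactly one $i$, since $\{\{i,i+1\}:i\in[3]\}$ exhausts $\binom{[3]}{2}$. Summing the squared codegrees gives
\[
\norm{\mathbb{C}[V_1,V_2,V_3]}_2=\sum_{i\in[3]}\left(\tbinom{|V_i|}{2}\,|V_{i+1}|^2+|V_i|\,|V_{i+1}|\,\big(|V_{i+2}|+|V_i|-1\big)^2\right),
\]
and substituting $|V_i|=x_in$ and collecting the coefficients of $n^4$, $n^3$, $n^2$ reproduces~\eqref{equ:C123-2norm-exp}; this last step is routine algebra.

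For the lower bound, I would first note that $\norm{\cdot}_2$ is monotone under passing to induced subgraphs (deleting a vertex cannot increase any codegree), so replacing each $V_i$ by a subset $V_i'$ with $|V_i'|=\lceil(1/3-\delta)n\rceil\le|V_i|$ only decreases the norm, and it suffices to bound $\norm{\mathbb{C}[V_1',V_2',V_3']}_2$ from below. Setting $x_i'=|V_i'|/n$, we have $u:=1/3-\delta\le x_i'\le 1/3+1/n\le 1/2$. In~\eqref{equ:C123-2norm-exp} the coefficient of $n^4$ is $\sum_i\tfrac12 x_i'x_{i+1}'\big(x_i'x_{i+1}'+2(x_i'+x_{i+2}')^2\big)$, and since $x_i'\ge u\ge 0$ each summand is at least $\tfrac12 u^2(u^2+8u^2)=\tfrac92 u^4$, so this coefficient is at least $\tfrac{27}{2}u^4=\tfrac{(1-3\delta)^4}{6}$. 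Using $(1-3\delta)^4\ge 1-12\delta+27\delta^2$ for $\delta\in(0,1/3)$ --- equivalently $t^2(1-t)(3-t)\ge 0$ at $t=3\delta$ --- the $n^4$-coefficient is at least $\tfrac16-2\delta+\tfrac92\delta^2$. The $n^2$-coefficient is nonnegative, while the coefficient of $-n^3$ equals $\sum_i\tfrac12 x_i'x_{i+1}'(4x_i'+x_{i+1}'+4x_{i+2}')\le 3\cdot\tfrac12\cdot\tfrac14\cdot\tfrac92=\tfrac{27}{16}<2$. Therefore
\[
\norm{\mathbb{C}[V_1',V_2',V_3']}_2\ge\Big(\tfrac16-2\delta+\tfrac92\delta^2\Big)n^4-2n^3\ge\Big(\tfrac16-2\delta\Big)n^4,
\]
the last inequality because $\tfrac92\delta^2 n^4\ge 2n^3$ whenever $n\ge\tfrac{9}{2\delta^2}$.

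I do not expect a genuine obstacle here; the one point that needs care is the bookkeeping in the last step. One must retain the $\Theta(\delta^2 n^4)$ slack in the leading coefficient --- the crude bound $(1-3\delta)^4\ge 1-12\delta$ alone is not enough, since the negative $n^3$ term would then dominate it --- as this slack is exactly what the hypothesis $n\ge 9/(2\delta^2)$ is calibrated to absorb. The codegree enumeration and the polynomial expansion leading to~\eqref{equ:C123-2norm-exp} are entirely mechanical.
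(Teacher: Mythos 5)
Your proposal is correct. The codegree enumeration (pairs inside $V_i$ have codegree $|V_{i+1}|$, cross pairs $V_iV_{i+1}$ have codegree $|V_i|+|V_{i+2}|-1$) expands exactly to~\eqref{equ:C123-2norm-exp}, and your lower bound keeps the $\Theta(\delta^2)n^4$ slack in the quartic coefficient and absorbs the negative cubic term using $n \ge 9/(2\delta^2)$, which is precisely the structure of the paper's argument. The one genuine difference is in how you control the lower-order term: the paper bounds each quartic summand from below by substituting $x_i = 1/3-\delta$ (obtaining $\tfrac{n^4}{18}-\tfrac{2\delta n^4}{3}+\delta^2 n^4$ per term) and bounds the cubic summand from above by plugging in $x_1=x_2=x_3=1$, i.e.\ it implicitly uses $x_i\le 1$, whereas you first truncate each $V_i$ to a subset of size $\lceil(1/3-\delta)n\rceil$, using monotonicity of $\norm{\cdot}_2$ under vertex deletion, and only then bound the coefficients with $1/3-\delta \le x_i' \le 1/2$. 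This truncation device is a small but real improvement in self-containment: the Fact as stated never asserts $x_1+x_2+x_3\le 1$, so your route does not need any upper bound on the original $x_i$, at the modest cost of an extra (easily justified) monotonicity observation; the paper's route is slightly shorter but leans on the intended setting where the three parts sit inside an $n$-vertex set. Numerically both are sound: your final absorption needs only $n\ge 4/(9\delta^2)$, comfortably implied by the stated hypothesis.
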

\begin{proof}[Proof of Fact~\ref{FACT:C123-2norm}]
    Equality~\eqref{equ:C123-2norm-exp} follows directly from the definition of $\mathbb{C}[V_1, V_2, V_3]$ and the definition of $2$-norm. 
    So it suffices to prove the second part of the statement. 
    Suppose that $x_i \ge 1/3 -\delta$ for $i \in [3]$. Then 
    \begin{align*}
        \frac{x_{i} x_{i+1} \left(2 \left(x_{i}+x_{i+2}\right)^2+x_{i} x_{i+1}\right)}{2} n^4
        & \ge \frac{n^4}{18} - \frac{2\delta n^4}{3} + \left(3\delta^2 - 6\delta^3 + \frac{9\delta^4}{2}\right) n^4 \\
        & \ge \frac{n^4}{18} - \frac{2\delta n^4}{3} + \delta^2 n^4, 
    \end{align*}
    where the last inequality uses the fact that $2\delta^2 - 6\delta^3 + \frac{9\delta^4}{2} \ge 0$ for $\delta \in [0, 1/3]$. 

    On the other hand, plugging in $x_1 = x_2 = x_3 = 1$, we obtain the trivial upper bound 
    \begin{align*}
        \frac{x_{i} x_{i+1} \left(4 x_{i}+x_{i+1}+4 x_{i+2}\right)}{2} n^3
        \le \frac{9 n^3}{2}. 
    \end{align*}
    Combining this with the inequality above, we obtain 
    \begin{align*}
        \norm{\mathbb{C}[V_1, V_2, V_3]}_{2}
        \ge 3\left(\frac{n^4}{18} - \frac{2\delta n^4}{3} + \delta^2 n^4 - \frac{9 n^3}{2} \right) 
        \ge \frac{n^4}{6} - 2\delta n^4, 
    \end{align*}
    completing the proof of Fact~\ref{FACT:C123-2norm}. 
\end{proof}

We will use the following inequality, which can be established by standard calculus, for convenience, we verified it using Mathematica\footnote[2]{A Mathematica notebook containing the calculations is available at \href{https://github.com/xliu2022/xliu2022.github.io/blob/main/Mathematica_Calculations/TetrahedronL2Norm.nb}{\url{https://github.com/xliu2022/xliu2022.github.io/blob/main/Mathematica_Calculations/TetrahedronL2Norm.nb}}. }.
\begin{lemma}\label{LEMMA:3-part-inequality}
    Suppose that $(x_1, x_2, x_3) \in \mathbb{R}^{3}$ are nonnegative real numbers satisfying $x_1 + x_2 + x_3 = 1$. Then 
    \begin{align*}
        x_1 x_2 x_3 + \frac{x_1^2 x_2}{2} + \frac{x_2^2 x_3}{2} + \frac{x_3^2 x_1}{2} 
        \le \frac{5}{54} - \frac{1}{50} \left( \left(x_1 - \tfrac{1}{3}\right)^2 + \left(x_2 - \tfrac{1}{3}\right)^2 + \left(x_3 - \tfrac{1}{3}\right)^2 \right). 
    \end{align*}
\end{lemma}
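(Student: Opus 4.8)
The plan is to prove the inequality by reducing it to a single-variable optimization via a Lagrange-multiplier / symmetry analysis, and then bounding the resulting one-dimensional function. Write $p(x_1,x_2,x_3) \coloneqq x_1x_2x_3 + \tfrac12(x_1^2x_2 + x_2^2x_3 + x_3^2x_1)$ and $q(x_1,x_2,x_3) \coloneqq (x_1-\tfrac13)^2 + (x_2-\tfrac13)^2 + (x_3-\tfrac13)^2$; we must show $p \le \tfrac{5}{54} - \tfrac{1}{50} q$ on the simplex $\Delta \coloneqq \{x_i \ge 0,\ x_1+x_2+x_3=1\}$. Equivalently, setting $f \coloneqq \tfrac{5}{54} - \tfrac{1}{50}q - p$, I want $f \ge 0$ on $\Delta$, with equality exactly at the centroid $(\tfrac13,\tfrac13,\tfrac13)$ (where one checks $p = \tfrac{5}{54}$ and $q=0$).

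First I would parametrize the simplex by $x_1 = \tfrac13 + a$, $x_2 = \tfrac13 + b$, $x_3 = \tfrac13 - a - b$, turning $f$ into a genuine polynomial in two free variables $(a,b)$ over the triangle $T \coloneqq \{a \ge -\tfrac13,\ b \ge -\tfrac13,\ a+b \le \tfrac13\}$. Since $p$ is cubic and $q$ quadratic, $f$ is a cubic polynomial in $(a,b)$ whose constant term is $0$ and — crucially — whose linear and quadratic parts can be computed explicitly: the linear part vanishes (the centroid is a critical point of $p$ restricted to $\Delta$, because $\nabla p$ at the centroid is a scalar multiple of $(1,1,1)$ by cyclic symmetry), and the quadratic part is a positive-definite form coming from $-\tfrac{1}{50}q$ minus the Hessian of $-p$. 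I would verify this quadratic form is positive definite (this is where the specific constant $\tfrac{1}{50}$ is chosen small enough: the Hessian of $p$ at the centroid has bounded operator norm on the subspace $\sum x_i = 0$, and $\tfrac{1}{50}$ beats it). So near the centroid $f$ is positive; the content of the lemma is that the cubic correction never drags it below zero on the rest of $T$.

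To handle the whole triangle rather than just a neighborhood, I would use the standard route: analyze the critical points of $f$ on the open region $\mathrm{int}(T)$ via $\nabla f = 0$ (a system of two polynomial equations of degree $2$, hence finitely many solutions, which one can enumerate — by cyclic symmetry the centroid and possibly points on the lines $a=b$, $b = -2a-\tfrac13$, etc., are the candidates), check $f \ge 0$ at each, and then separately check $f \ge 0$ on the three edges of $T$, where $f$ becomes a one-variable cubic (e.g. on $x_3 = 0$ we get $f(t,1-t,0) = \tfrac{5}{54} - \tfrac{1}{50}((t-\tfrac13)^2+(\tfrac23-t)^2+\tfrac19) - \tfrac12 t^2(1-t)$ for $t \in [0,1]$), whose minimum over $[0,1]$ is found by checking the two endpoints and the (at most two) roots of its derivative. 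Each of these is a finite, elementary computation, and the paper explicitly delegates it to Mathematica; I would do the same, presenting the reduction and citing the notebook for the arithmetic.

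The main obstacle I anticipate is not conceptual but bookkeeping: the polynomial $f$ in two variables, while only cubic, has enough terms that verifying positive-definiteness of the quadratic part and ruling out sign changes from the cubic part by hand is error-prone, and one must be careful that the constants $\tfrac{5}{54}$ and $\tfrac{1}{50}$ are tight enough to make the statement true yet loose enough to survive the cubic perturbation over the full simplex (in particular near the corners, where $q$ is largest — e.g. at a vertex $q = \tfrac23$, giving the bound $\tfrac{5}{54} - \tfrac{1}{75} \approx 0.079 > 0 = p$, so there is slack, but along edges the margin is thinner). For a clean writeup I would either (i) present the substitution, state that $f$ is a nonnegative cubic on $T$ by the critical-point-plus-boundary analysis, and cite the Mathematica verification, or (ii) if a human-checkable argument is wanted, bound $f$ from below by grouping its cubic terms into manifestly controllable pieces (e.g. writing the cubic part in terms of $a-b$, $b-c$, $c-a$ and using AM–GM / Schur-type estimates against the positive quadratic part), which is the "standard calculus" the paper alludes to.
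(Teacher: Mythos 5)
Your proposal is correct and matches the paper's approach: the paper itself offers no detailed argument for this lemma, stating only that it ``can be established by standard calculus'' and citing a Mathematica notebook for the verification, which is exactly the reduction-plus-computer-check route you describe. Your structural claims are sound (after writing $x_i = \tfrac{1}{3} + u_i$ with $\sum u_i = 0$ the inequality becomes $u_1u_2u_3 + \tfrac{1}{2}(u_1^2u_2 + u_2^2u_3 + u_3^2u_1) \le \tfrac{11}{75}\sum u_i^2$, so the linear part indeed vanishes and the quadratic part is positive definite on the simplex's tangent plane), so if anything your outline is more explicit than the paper's.
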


Since the shadow of $K_{4}^{3}$ is a complete graph, duplicating a vertex in a $K_{4}^{3}$-free $3$-graph does not create any copy of $K_{4}^{3}$. 
Thus, a simple deletion–duplication argument (see e.g.~{\cite[Lemma~2.6]{HLZ25Fano}}) shows that every extremal (in terms of $\ell_{2}$-degree) $K_{4}^{3}$-free $3$-graph is early regular in the $\ell_{2}$-norm degree. 

\begin{lemma}\label{LEMMA:2norm-degree-regular}
    Suppose that $\mathcal{H}$ is an $K_{4}^{3}$-free $n$-vertex $3$-graph with $\norm{\mathcal{H}}_{2} = \mathrm{ex}_{\ell_2}(n, K_{4}^{3})$.
    Then for every pair of vertices $\{u,v\} \subseteq V(\mathcal{H})$, we have $\left|s_{\mathcal{H}}(u) - s_{\mathcal{H}}(v)\right| \le 60 n^2$. 
    In particular, for every vertex $v \in V(\mathcal{H})$, 
    \begin{align*}%\label{equ:LEMMA-near-regular-extremal-graph}
        \left|s_{\mathcal{H}}(v) - s(\mathcal{H})\right| 
        \le 60 n^2.
    \end{align*}
\end{lemma}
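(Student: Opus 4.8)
The plan is to argue by the standard deletion--duplication (``symmetrization'') trick, exploiting that $K_4^3$ has complete shadow. First I would record the elementary fact that if $\mathcal{H}$ is $K_4^3$-free then so is the $3$-graph $\mathcal{H}_{u\to v}$ obtained by deleting all edges through $u$ and then making $u$ a clone of $v$ (i.e.\ replacing the link of $u$ by a copy of the link of $v$ on $V\setminus\{u,v\}$); indeed, any copy of $K_4^3$ in $\mathcal{H}_{u\to v}$ using $u$ would, after swapping $u$ for $v$, give either a copy of $K_4^3$ in $\mathcal{H}$ (if $v$ is not already in the copy) or a copy of $K_4^3$ on the remaining four vertices together with the edge $\{u,v,\cdot\}$ being irrelevant --- more precisely, since the shadow of $K_4^3$ is complete, $v$ and its clone $u$ are ``twins,'' so a $K_4^3$ through $u$ in $\mathcal{H}_{u\to v}$ forces a $K_4^3$ in $\mathcal{H}_{u\to v}-u \subseteq \mathcal{H}$, a contradiction. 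This is exactly the content cited from \cite[Lemma~2.6]{HLZ25Fano}, so I would invoke it rather than reprove it.

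Next I would quantify the effect of this operation on the $\ell_2$-norm. Write $s(u)=s_{\mathcal{H}}(u)=\norm{\mathcal{H}}_2-\norm{\mathcal{H}-u}_2$ for the $2$-norm degree. The graph $\mathcal{H}_{u\to v}$ has the same vertex set, and $\norm{\mathcal{H}_{u\to v}}_2 \geq \norm{(\mathcal{H}-u)}_2 + s_{\mathcal{H}_{u\to v}}(u)$, where $s_{\mathcal{H}_{u\to v}}(u)$ is the $2$-norm degree of the newly inserted clone. The only subtlety is that cloning $v$ as $u$ is not literally ``adding a vertex with link $L_{\mathcal{H}}(v)$'' because the edge $\{u,v,w\}$ for $w\in N(v)$ and the pairs $uv$ interact; however the discrepancy between $s_{\mathcal{H}_{u\to v}}(u)$ and $s_{\mathcal{H}}(v)$ involves only pairs and triples meeting $\{u,v\}$, of which there are $O(n)$ pairs and each changes a squared codegree that is $O(n)$, hence this discrepancy is $O(n^2)$; a clean bound is $|s_{\mathcal{H}_{u\to v}}(u)-s_{\mathcal{H}}(v)|\le C n^2$ for an explicit small constant $C$. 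I would compute $C$ carefully from \eqref{equ:def-2norm-degree-b}: the term $\norm{L(v)}_2$ is unchanged, each of the at most $n$ pairs $e\ni$ (one of $u,v$) shifts $d_{\mathcal{H}}(e)$ by at most $1$, changing $d^2$ by at most $2n+1$, and $d(u)$ vs $d(v)$ differ by at most $n$; tallying these gives $C\le 30$ comfortably.

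Now the main argument: suppose for contradiction that some pair $u,v$ has $s(u)-s(v)>60n^2$. Apply the clone operation to replace $v$'s role so that the worse vertex is improved --- i.e.\ form $\mathcal{H}'=\mathcal{H}_{v\to u}$. Then
\begin{align*}
\norm{\mathcal{H}'}_2 \;&\ge\; \norm{\mathcal{H}-v}_2 + s_{\mathcal{H}'}(v)
\;\ge\; \big(\norm{\mathcal{H}}_2 - s_{\mathcal{H}}(v)\big) + \big(s_{\mathcal{H}}(u) - Cn^2\big)\\
&=\; \norm{\mathcal{H}}_2 + \big(s_{\mathcal{H}}(u)-s_{\mathcal{H}}(v)\big) - Cn^2
\;>\; \norm{\mathcal{H}}_2 + 60n^2 - 30 n^2 \;>\; \norm{\mathcal{H}}_2,
\end{align*}
contradicting $\norm{\mathcal{H}}_2=\mathrm{ex}_{\ell_2}(n,K_4^3)$ together with the first fact that $\mathcal{H}'$ is still $K_4^3$-free on $n$ vertices. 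This gives $|s(u)-s(v)|\le 60n^2$ for all pairs. The ``in particular'' clause is then immediate: $s(\mathcal{H})$ is the average of the $s(v)$, so $|s(v)-s(\mathcal{H})| \le \max_w |s(v)-s(w)| \le 60n^2$.

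The main obstacle --- really the only place requiring care --- is pinning down the constant $C$ in $|s_{\mathcal{H}_{u\to v}}(u)-s_{\mathcal{H}}(v)|\le Cn^2$ and making sure it is safely below $60$, since the cloning operation genuinely perturbs codegrees of pairs meeting $\{u,v\}$ and I must not double-count or mis-sign these $O(n)$ perturbations each of magnitude $O(n)$. Everything else is bookkeeping with \eqref{equ:def-2norm-degree-b}. One should also double-check the edge case where $u\in N_{\mathcal{H}}(v)$ (so $uv\in\partial\mathcal{H}$), which only affects finitely many pairs and is absorbed into $Cn^2$; and note $n$ must be at least a small absolute constant for the displayed strict inequalities, which is harmless since the statement is used only for large $n$.
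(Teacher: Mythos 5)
Your proposal is correct and follows essentially the same route as the paper, which proves this lemma by exactly the deletion--duplication argument you describe (citing that duplicating a vertex preserves $K_4^3$-freeness because the shadow of $K_4^3$ is complete, then comparing $\ell_2$-norms via the $2$-norm degree formula). The only minor inaccuracy is your remark that the link term $\norm{L(v)}_2$ is ``unchanged'' under cloning --- it shifts because codegrees of pairs meeting $\{u,v\}$ each change by at most $1$ --- but this perturbation is $O(n^2)$ with a small constant and is absorbed by your slack, so the argument stands.
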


The following stability theorem of Balogh--Clemen--Lidick\'{y} will be crucial for our proof. 

\begin{theorem}[{\cite[Theorem~1.5]{BCL22b}}]\label{THM:K43-L2-stability}
    For every $\varepsilon >0$, there exist $\delta_{\ref{THM:K43-L2-stability}}= \delta_{\ref{THM:K43-L2-stability}}(\varepsilon) > 0$ and $N_{\ref{THM:K43-L2-stability}} = N_{\ref{THM:K43-L2-stability}}(\varepsilon)$ such that the following holds for every $n \ge N_{\ref{THM:K43-L2-stability}}$.
    Suppose that $\mathcal{H}$ is an $n$-vertex $K_4^3$-free $3$-graph with $\norm{\mathcal{H}}_2 \ge \left(\frac{1}{6}- \delta_{\ref{THM:K43-L2-stability}} \right) n^4$. 
    Then 
    \begin{align*}
        |\mathcal{H} \triangle \mathbb{C}_n| 
        \le \varepsilon n^3. 
    \end{align*}
\end{theorem}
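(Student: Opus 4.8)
The plan is to follow the flag algebra route that established the density bound in Theorem~\ref{THM:BCL22-L2-density}, upgrading the underlying SDP certificate to a stability statement and then transferring the structure from the limit object back to $\mathcal{H}$. Suppose for contradiction that the conclusion fails: there exist $\varepsilon>0$ and a sequence $(\mathcal{H}_m)_{m\ge1}$ of $K_4^3$-free $3$-graphs with $v(\mathcal{H}_m)\to\infty$ and $\norm{\mathcal{H}_m}_2\ge\big(\tfrac16-o(1)\big)v(\mathcal{H}_m)^4$, yet $|\mathcal{H}_m\triangle\mathbb{C}_{v(\mathcal{H}_m)}|>\varepsilon\,v(\mathcal{H}_m)^3$ (for the best balanced partition). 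Passing to a subsequence, let $\phi$ be the resulting $K_4^3$-free limit object. Since $\norm{\mathcal{H}}_2=\sum_{e}\big(2\tbinom{d_{\mathcal{H}}(e)}{2}+d_{\mathcal{H}}(e)\big)$ counts, up to scaling, the number of pairs of edges meeting in exactly two vertices plus the number of edges, the normalized objective is a fixed linear combination of densities of $3$-graphs on at most four vertices, so $\phi$ attains the optimal value $\tfrac13$.

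The core step is to show that the only extremal $\phi$ is the limit of $\mathbb{C}_n$. One revisits the certificate proving $\pi_{\ell_2}(K_4^3)\le\tfrac13$ and reads off its \emph{tight} constraints at $\phi$: every flag with strictly positive coefficient must have density $0$ in $\phi$, and every square in the sum-of-squares part must evaluate to $0$ against every root type. Unwinding these vanishing conditions should force, in order, that $V(\phi)$ splits into three parts of equal measure, that the pairwise and internal densities between and inside the parts are exactly those of $\mathbb{C}$, and that the link of almost every vertex is a relabeling of the cyclically triangle-free colored graph $\Lambda[V_1,V_2,V_3]$ from Section~\ref{SEC:vtx-colored-Turan}. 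A more combinatorial variant of this last part: using $\norm{\mathcal{H}}_2=\tfrac12\sum_v\norm{L_{\mathcal{H}}(v)}_2$, once a rough $3$-coloring is available (for instance from Theorem~\ref{THM:BCL22-L2-density} via the removal lemma), a typical link is a locally maximal $3$-colored graph with $\rho_3$ close to $0$, so Theorems~\ref{THM:3-colored-Mantel-L2-norm} and~\ref{THM:3-colored-Mantel-stability} pin it down to $\Lambda$ up to $o(n^2)$ edges, and summation over $v$ recovers the global $\mathbb{C}$-structure.

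Having identified $\phi$ with the limit of $\mathbb{C}_n$, a routine counting/removal argument transfers this back: convergence of $\mathcal{H}_m$ to this step-function limit yields a balanced partition $V(\mathcal{H}_m)=U_1\cup U_2\cup U_3$ with $|\mathcal{H}_m\triangle\mathbb{C}[U_1,U_2,U_3]|=o(v(\mathcal{H}_m)^3)$, contradicting the choice of the sequence.

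I expect the uniqueness step to be the main obstacle. Unlike the $\ell_1$-norm problem for $K_4^3$, whose extremal number is achieved by exponentially many non-isomorphic constructions, here one must verify that passing to the $\ell_2$-norm collapses the extremal family to the single object $\mathbb{C}$; this requires a careful reading of exactly which flags the certificate forces to vanish and a check that none of the recursive or hybrid $\ell_1$-extremal configurations survives all of them. Turning the per-vertex link information into one globally consistent $3$-partition while keeping the accumulated error at $o(n^3)$ is the other delicate point, and is precisely what the colored-Mantel machinery of Section~\ref{SEC:vtx-colored-Turan} is designed to control.
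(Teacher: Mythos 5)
You should first note that the paper does not prove this statement at all: Theorem~\ref{THM:K43-L2-stability} is imported verbatim from Balogh--Clemen--Lidick\'y~\cite{BCL22b} (their Theorem~1.5) and is used as a black box in Section~\ref{SEC:proof-THM-L2-exact-K43}. So there is no internal proof to compare against, and your write-up has to be judged as a standalone reconstruction of the result of~\cite{BCL22b}. As such it contains a genuine gap at exactly the point you flag as "the main obstacle." The core step -- that reading off the tight constraints of the SDP certificate for $\pi_{\ell_2}(K_4^3)\le\tfrac13$ forces every extremal limit object to be the limit of $\mathbb{C}_n$ -- is asserted ("should force") rather than argued. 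A certificate that proves the density bound need not have a zero set rich enough to pin down the extremal structure; whether the vanishing flags and the kernels of the sum-of-squares matrices determine the part sizes, the pairwise densities, and the link structure depends on the particular numerical solution, and verifying (or engineering) this is the entire content of the stability theorem in~\cite{BCL22b}. Your own remark about the $\ell_1$ problem cuts the other way as well: since the $\ell_1$ certificates are necessarily compatible with exponentially many extremal configurations, there is no general principle that "tightness analysis" collapses the extremal family, so this step cannot be waved through.

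Two further points would fail as written. First, your combinatorial fallback assumes "a rough $3$-coloring ... from Theorem~\ref{THM:BCL22-L2-density} via the removal lemma," but the density theorem provides no partition, and the removal lemma only deletes copies of $K_4^3$, of which there are none; producing an approximate $3$-partition from near-extremality in the $\ell_2$-norm is precisely what Theorem~\ref{THM:K43-L2-stability} delivers, and in this paper the colored-Mantel machinery (Theorems~\ref{THM:3-colored-Mantel-L2-norm} and~\ref{THM:3-colored-Mantel-stability}) is applied to links only \emph{after} that partition is fixed, chosen to maximize $|\mathcal{H}\cap\mathbb{C}[V_1,V_2,V_3]|$ so that the local-maximality hypotheses hold. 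Using it earlier is circular. Second, the final transfer from "the limit object equals the $\mathbb{C}$-limit" to $|\mathcal{H}_m\triangle\mathbb{C}_n|\le\varepsilon n^3$ is not automatic: convergence of subgraph densities gives closeness in a cut-type metric, and edit-distance closeness then needs an argument (here it can be supplied because the limit is a $\{0,1\}$-valued step object with boundedly many steps, so after aligning the partition the deviation has constant sign on each box), but this must be said rather than labeled "routine counting," since for general limits cut-closeness does not imply edit-closeness.
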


%%%%%%%%%%%%%%%%%%%%%%%%%%%%%%%%%%%
\subsection{Two key lemmas}\label{SUBSEC:two-key-lemma}
In this subsection, we establish two lemmas providing local modifications (see Figure~\ref{fig:local-adjust}) of $3$-graphs ``close'' to $\mathbb{C}_{n}$ that increase their $\ell_{2}$-norm.
In these lemmas, we do not require the $K_{4}^{3}$-freeness. 

Let $\mathcal{H}$ be a $3$-graph. 
For every pair $e \subseteq V(\mathcal{H})$, define 
\begin{align*}
    \mathcal{H}(e)
    \coloneqq \left\{ E\in \mathcal{H} \colon e\subseteq E \right\}.
\end{align*}
Given a partition $V_1 \cup V_2 \cup V_{3} = V(\mathcal{H})$, let 
\begin{align}\label{equ:def-bad-missing-triples}
    \mathcal{B}_{\mathcal{H}}[V_1, V_2, V_3]
    \coloneqq \mathcal{\mathcal{H}} \setminus \mathbb{C}[V_1, V_2, V_3]
    \quad\text{and}\quad 
    \mathcal{M}_{\mathcal{H}}[V_1, V_2, V_3]
    \coloneqq \mathbb{C}[V_1, V_2, V_3] \setminus \mathcal{H}. 
\end{align}
We refer to $\mathcal{B}_{\mathcal{H}}[V_1, V_2, V_3]$ and $\mathcal{M}_{\mathcal{H}}[V_1, V_2, V_3]$ as the sets of \emph{bad edges} and \emph{missing edges} of $\mathcal{H}$ (with respects to $(V_1, V_2, V_3)$). 

We further partition the set of bad edges by setting 
\begin{equation}
\begin{aligned}
    \mathcal{B}^{\mathrm{int}}_{\mathcal{H}}[V_1, V_2, V_3]
    & \coloneqq \left\{e \in \mathcal{B}_{\mathcal{H}}[V_1, V_2, V_3] \colon \text{$e\subseteq V_i$ for some $i \in [3]$}\right\}, \\
    \mathcal{B}^{\mathrm{bi}}_{\mathcal{H}}[V_1, V_2, V_3]
    & \coloneqq \mathcal{B}_{\mathcal{H}}[V_1, V_2, V_3] \setminus \mathcal{B}^{\mathrm{int}}_{\mathcal{H}}[V_1, V_2, V_3]. 
\end{aligned}
\label{equ:def-B-int-bi}
\end{equation}
Partition the set of missing edges by setting 
\begin{equation}
\begin{aligned}
    \mathcal{M}^{\mathrm{tri}}_{\mathcal{H}}[V_1, V_2, V_3]
    & \coloneqq \left\{e \in \mathcal{M}_{\mathcal{H}}[V_1, V_2, V_3] \colon \text{$|e \cap V_i| = 1$ for $i \in [3]$}\right\}, \\
    \mathcal{M}^{\mathrm{bi}}_{\mathcal{H}}[V_1, V_2, V_3]
    & \coloneqq \mathcal{M}_{\mathcal{H}}[V_1, V_2, V_3] \setminus \mathcal{M}^{\mathrm{tri}}_{\mathcal{H}}[V_1, V_2, V_3]. 
\end{aligned}
\label{equ:def-M-tri-bi}
\end{equation}
We will omit the subscript $\mathcal{H}$ and the bracket $[V_1, V_2, V_3]$ if it is clear from the context. 

% We next establish two key lemmas showing that certain local adjustments to $3$-graphs close to $\mathbb{C}[V_1, V_2, V_3]$ increase the $\ell_{2}$-norm. 

%%%%%%%%%%%%%%%%%%%%%
%%%%%%%%%%%%%%%%%%%%%%%%%%%%%%%%%%%
\begin{figure}[H]
\centering

\tikzset{every picture/.style={line width=1pt}} %set default line width to 0.75pt        

\begin{tikzpicture}[x=0.75pt,y=0.75pt,yscale=-1,xscale=1,line join=round, scale=0.8]
%uncomment if require: \path (0,300); %set diagram left start at 0, and has height of 300

%Shape: Circle [id:dp148810739332483] 
\draw   (163,87.5) .. controls (163,65.13) and (181.13,47) .. (203.5,47) .. controls (225.87,47) and (244,65.13) .. (244,87.5) .. controls (244,109.87) and (225.87,128) .. (203.5,128) .. controls (181.13,128) and (163,109.87) .. (163,87.5) -- cycle ;
%Shape: Circle [id:dp5827186371000159] 
\draw   (99.25,193.5) .. controls (99.25,171.13) and (117.38,153) .. (139.75,153) .. controls (162.12,153) and (180.25,171.13) .. (180.25,193.5) .. controls (180.25,215.87) and (162.12,234) .. (139.75,234) .. controls (117.38,234) and (99.25,215.87) .. (99.25,193.5) -- cycle ;
%Shape: Circle [id:dp4476172002944544] 
\draw   (226.75,193.5) .. controls (226.75,171.13) and (244.88,153) .. (267.25,153) .. controls (289.62,153) and (307.75,171.13) .. (307.75,193.5) .. controls (307.75,215.87) and (289.62,234) .. (267.25,234) .. controls (244.88,234) and (226.75,215.87) .. (226.75,193.5) -- cycle ;
%Straight Lines [id:da6455652583714113] 
\draw  [fill=uuuuuu, fill opacity=0.3]  (222.5,92) -- (198.11,105) -- (179.5,78) -- (222.5,92) ;
\draw [fill=uuuuuu] (222.5,92) circle (1.2pt);
\draw [fill=uuuuuu]  (198.11,105) circle (1.2pt);
\draw [fill=uuuuuu] (179.5,78) circle (1.2pt);
%
%Straight Lines [id:da653977400369999] 
\draw  [fill=uuuuuu, fill opacity=0.3]  (222.5,92) -- (198.11,105) -- (191.5,68) --  (222.5,92) ;
\draw [fill=uuuuuu] (191.5,68) circle (1.2pt);
%
%Straight Lines [id:da32667476859542965] 
\draw [dashed, fill=uuuuuu, fill opacity=0.3]  (222.5,92) -- (198.11,105) -- (260.5,206) -- (222.5,92);
%Straight Lines [id:da3626277033525023] 
\draw  [dashed, fill=uuuuuu, fill opacity=0.3]  (222.5,92) -- (198.11,105) -- (279.5,198) -- (222.5,92);
\draw [fill=uuuuuu]  (260.5,206) circle (1.2pt);
\draw [fill=uuuuuu] (279.5,198) circle (1.2pt);
%
%Straight Lines [id:da73478566385786] 
\draw  [fill=uuuuuu, fill opacity=0.3] (222.5,92) -- (198.11,105) -- (139.75,193.5) -- (222.5,92) ;
\draw [fill=uuuuuu] (139.75,193.5) circle (1.2pt);
%
%Shape: Circle [id:dp4579423975250556] 
\draw   (435,87) .. controls (435,64.63) and (453.13,46.5) .. (475.5,46.5) .. controls (497.87,46.5) and (516,64.63) .. (516,87) .. controls (516,109.37) and (497.87,127.5) .. (475.5,127.5) .. controls (453.13,127.5) and (435,109.37) .. (435,87) -- cycle ;
%Shape: Circle [id:dp45352803145860876] 
\draw   (371.25,193) .. controls (371.25,170.63) and (389.38,152.5) .. (411.75,152.5) .. controls (434.12,152.5) and (452.25,170.63) .. (452.25,193) .. controls (452.25,215.37) and (434.12,233.5) .. (411.75,233.5) .. controls (389.38,233.5) and (371.25,215.37) .. (371.25,193) -- cycle ;
%Shape: Circle [id:dp260802263484318] 
\draw   (498.75,193) .. controls (498.75,170.63) and (516.88,152.5) .. (539.25,152.5) .. controls (561.62,152.5) and (579.75,170.63) .. (579.75,193) .. controls (579.75,215.37) and (561.62,233.5) .. (539.25,233.5) .. controls (516.88,233.5) and (498.75,215.37) .. (498.75,193) -- cycle ;
%Straight Lines [id:da9392933436052833] 
\draw  [fill=uuuuuu, fill opacity=0.3]  (475.5,87) -- (523.05,198.9) -- (556.5,206.54) -- (475.5,87) ;
\draw [fill=uuuuuu] (475.5,87) circle (1.2pt);
\draw [fill=uuuuuu]  (523.05,198.9) circle (1.2pt);
\draw [fill=uuuuuu] (556.5,206.54) circle (1.2pt);
%
%Straight Lines [id:da7133740514545569] 
\draw  [fill=uuuuuu, fill opacity=0.3]  (475.5,87) -- (523.05,198.9) -- (554.47,183.62) -- (475.5,87);
\draw [fill=uuuuuu] (554.47,183.62) circle (1.2pt);
%
%Straight Lines [id:da8918216502778946] 
\draw  [dashed, fill=uuuuuu, fill opacity=0.3]  (475.5,87) -- (523.05,198.9) -- (405.5,191) -- (475.5,87) ;
%Straight Lines [id:da7470215560324545] 
\draw  [dashed, fill=uuuuuu, fill opacity=0.3]  (475.5,87) -- (523.05,198.9) -- (414.5,208) -- (475.5,87);
\draw [fill=uuuuuu] (405.5,191) circle (1.2pt);
\draw [fill=uuuuuu] (414.5,208) circle (1.2pt);
%
% Text Node
\draw (192,24) node [anchor=north west][inner sep=0.75pt]   [align=left] {$V_{1}$};
% Text Node
\draw (262,239) node [anchor=north west][inner sep=0.75pt]   [align=left] {$V_{2}$};
% Text Node
\draw (127,239) node [anchor=north west][inner sep=0.75pt]   [align=left] {$V_{3}$};
% Text Node
\draw (465,24) node [anchor=north west][inner sep=0.75pt]   [align=left] {$V_{1}$};
% Text Node
\draw (533,239) node [anchor=north west][inner sep=0.75pt]   [align=left] {$V_{2}$};
% Text Node
\draw (399,239) node [anchor=north west][inner sep=0.75pt]   [align=left] {$V_{3}$};
\end{tikzpicture}
\caption{Bad and missing edges in Lemmas~\ref{LEMMA:K43-L2-improvement-phase-one} and~\ref{LEMMA:K43-L2-improvement-phase-two}.}
\label{fig:local-adjust}
\end{figure}
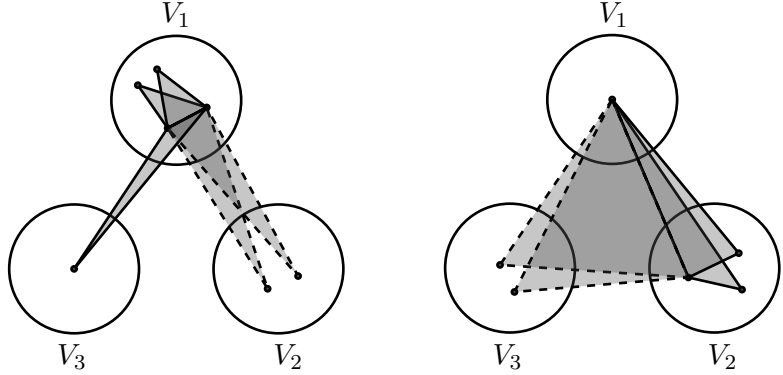
%%%%%%%%%%%%%%%%%%%%%%%%%%%%%%%%%%%
%%%%%%%%%%%%%%%%%%%%%

\begin{lemma}\label{LEMMA:K43-L2-improvement-phase-one}
    There exists a constant $\xi_{\ref{LEMMA:K43-L2-improvement-phase-one}}>0$ such that the following holds for every $\xi \in (0, \xi_{\ref{LEMMA:K43-L2-improvement-phase-one}})$ and every $n \ge 1/\xi$. 
    Let $\mathcal{G}$ be an $n$-vertex $3$-graph, and let $V_{1} \cup V_{2} \cup V_{3} = V(\mathcal{G})$ be a partition. 
    Let $e_{\ast} \in \partial\mathcal{G}$ be an edge in the shadow such that $e_{\ast} \in \overline{K}[V_1, V_2, V_3]$. 
    Suppose that 
    \begin{enumerate}[label=(\roman*)]
        \item\label{LEMMA:K43-L2-improvement-phase-one-1} $\max_{i \in [3]}|V_i - n/3| \le \xi n$, 
        \item\label{LEMMA:K43-L2-improvement-phase-one-2} $\max\left\{\Delta(\mathcal{M}),~\Delta(\mathcal{B})\right\} \le \xi n^2$, 
        \item\label{LEMMA:K43-L2-improvement-phase-one-3} $d_{\mathcal{M}}(e_{\ast}) \ge 47\xi^{1/2} n$, 
        \item\label{LEMMA:K43-L2-improvement-phase-one-4} $d_{\mathcal{M}}(e_{\ast}) \ge d_{\mathcal{B}}(e_{\ast}) - \xi n$, 
        \item\label{LEMMA:K43-L2-improvement-phase-one-5} $d_{\mathcal{B}^{\mathrm{bi}}}(e_{\ast}) \le \xi n$. 
    \end{enumerate}
    Then the new $3$-graph $\mathcal{G}^{\ast}\coloneqq \left( \mathcal{G} \setminus \mathcal{B}(e_{\ast}) \right) \cup \mathcal{M}(e_{\ast})$ satisfies 
    \begin{align*}
        \norm{\mathcal{G}^{\ast}}_{2}
        > \norm{\mathcal{G}}_{2}. 
    \end{align*}
\end{lemma}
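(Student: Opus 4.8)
The plan is to compute $\norm{\mathcal{G}^{\ast}}_2-\norm{\mathcal{G}}_2$ exactly in terms of a handful of codegree sums, and then estimate each term. Since $\mathbb{C}[V_1,V_2,V_3]$ is invariant under the cyclic shift $V_1\to V_2\to V_3\to V_1$, we may assume $e_{\ast}=\{a,b\}\subseteq V_1$. The only triples of $\mathbb{C}[V_1,V_2,V_3]$ containing $e_{\ast}$ are those of the form $e_{\ast}\cup\{w\}$ with $w\in V_2$; hence, writing $A\coloneqq\{w\in V_2:e_{\ast}\cup\{w\}\notin\mathcal{G}\}$ and $R\coloneqq\{w:e_{\ast}\cup\{w\}\in\mathcal{B}\}$, we have $R\subseteq(V_1\cup V_3)\setminus e_{\ast}$, $|A|=d_{\mathcal{M}}(e_{\ast})$, $|R|=d_{\mathcal{B}}(e_{\ast})$, and $R\cap V_3$ is precisely the set accounting for $\mathcal{B}^{\mathrm{bi}}(e_{\ast})$, so $|R\cap V_3|=d_{\mathcal{B}^{\mathrm{bi}}}(e_{\ast})\le\xi n$ by~\ref{LEMMA:K43-L2-improvement-phase-one-5}. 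The move $\mathcal{G}\mapsto\mathcal{G}^{\ast}$ deletes the triples $\{e_{\ast}\cup\{w\}:w\in R\}$ and inserts the triples $\{e_{\ast}\cup\{w\}:w\in A\}$; as all of these contain $e_{\ast}$, the only codegrees that change are $d(e_{\ast})$ and $d(aw),d(bw)$ for $w\in A\cup R$. Using $(x\pm1)^2=x^2\pm 2x+1$ and the fact that $d_{\mathcal{G}^{\ast}}(e_{\ast})=|V_2|$, a direct count gives the identity
\begin{align*}
    \norm{\mathcal{G}^{\ast}}_2-\norm{\mathcal{G}}_2
    = \big(|V_2|^2-d_{\mathcal{G}}(e_{\ast})^2\big)+2(|A|+|R|)+2(D_A-D_R),
\end{align*}
where $D_A\coloneqq\sum_{w\in A}\big(d_{\mathcal{G}}(aw)+d_{\mathcal{G}}(bw)\big)$ and $D_R\coloneqq\sum_{w\in R}\big(d_{\mathcal{G}}(aw)+d_{\mathcal{G}}(bw)\big)$. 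It remains to show the right-hand side is positive.

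For the first term, grouping the triples of $\mathcal{G}$ through $e_{\ast}$ by the part of their third vertex gives $d_{\mathcal{G}}(e_{\ast})=(|V_2|-|A|)+|R|\le|V_2|+\xi n$ by~\ref{LEMMA:K43-L2-improvement-phase-one-4}, so $|V_2|^2-d_{\mathcal{G}}(e_{\ast})^2\ge-2\xi n^2$ using $|V_2|\le n$ from~\ref{LEMMA:K43-L2-improvement-phase-one-1}. For $D_A$: for $w\in A\subseteq V_2$ the number of $\mathbb{C}$-triples through $\{a,w\}$ is $|V_1|+|V_3|-1$, and the ones absent from $\mathcal{G}$ are counted by $d_{\mathcal{M}}(aw)$, so $d_{\mathcal{G}}(aw)\ge(|V_1|+|V_3|-1)-d_{\mathcal{M}}(aw)$; summing over $w\in A$ and using $\sum_{w}d_{\mathcal{M}}(aw)=2d_{\mathcal{M}}(a)\le2\Delta(\mathcal{M})\le2\xi n^2$ (from~\ref{LEMMA:K43-L2-improvement-phase-one-2}) together with $|V_1|+|V_3|\ge\tfrac{2n}{3}-2\xi n$ yields $D_A\ge\tfrac{4}{3}|A|n-O(\xi n^2)-O(n)$. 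For $D_R$: when $w\in R\cap V_1$ the pair $\{a,w\}$ lies inside $V_1$, so its only $\mathbb{C}$-triples have their third vertex in $V_2$, giving $d_{\mathcal{G}}(aw)\le|V_2|+d_{\mathcal{B}}(aw)$; summing, using $\sum_{w}d_{\mathcal{B}}(aw)=2d_{\mathcal{B}}(a)\le2\Delta(\mathcal{B})\le2\xi n^2$ and $|V_2|\le\tfrac{n}{3}+\xi n$, while bounding each of the at most $\xi n$ contributions from $w\in R\cap V_3$ crudely by $n$, gives $D_R\le\tfrac{2}{3}|R|n+O(\xi n^2)$.

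Combining these bounds with $|R|\le|A|+\xi n$ (again~\ref{LEMMA:K43-L2-improvement-phase-one-4}) yields $D_A-D_R\ge\tfrac{2}{3}|A|n-O(\xi n^2)-O(n)$, hence $\norm{\mathcal{G}^{\ast}}_2-\norm{\mathcal{G}}_2\ge\tfrac{4}{3}|A|n-O(\xi n^2)-O(n)$. Finally $|A|=d_{\mathcal{M}}(e_{\ast})\ge47\xi^{1/2}n$ by~\ref{LEMMA:K43-L2-improvement-phase-one-3}, so the leading term is at least $\tfrac{4}{3}\cdot47\,\xi^{1/2}n^2>62\xi^{1/2}n^2$, which dominates the $O(\xi n^2)$ error (as $\xi^{1/2}\gg\xi$ for small $\xi$) and the $O(n)$ error (as $n\le\xi n^2$ when $n\ge1/\xi$); the constant $47$ in~\ref{LEMMA:K43-L2-improvement-phase-one-3} is precisely calibrated to absorb all the accumulated error constants. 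Taking $\xi_{\ref{LEMMA:K43-L2-improvement-phase-one}}$ small enough that all the ``$\xi$ small'' steps above are valid (e.g. $|V_i|\le n$ and $|R|\le n$) then finishes the argument.

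The main difficulty to watch for is that hypothesis~\ref{LEMMA:K43-L2-improvement-phase-one-2} bounds only the \emph{vertex}-degrees of $\mathcal{B}$ and $\mathcal{M}$, not the individual pair-codegrees $d_{\mathcal{B}}(aw),d_{\mathcal{M}}(aw)$, which may be as large as $\xi n^2\gg n$; so one cannot estimate $D_A$ and $D_R$ term by term. The point is that only the \emph{sums} over $w$ are needed, and the identity $\sum_{w}d_{\mathcal{B}}(aw)=2d_{\mathcal{B}}(a)$ (likewise for $\mathcal{M}$) converts the vertex-degree bound into exactly the control required. Everything else is bookkeeping: keeping the signs straight in the codegree-change identity, and keeping the cyclic relabelling consistent with the asymmetric structure of $\mathbb{C}[V_1,V_2,V_3]$.
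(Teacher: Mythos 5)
Your proof is correct, and it follows the same overall skeleton as the paper's: reduce the difference $\norm{\mathcal{G}^{\ast}}_{2}-\norm{\mathcal{G}}_{2}$ to the change in the codegree of $e_{\ast}$ and of the pairs $\{a,w\},\{b,w\}$ over the third vertices $w$ of the added/deleted triples, then lower-bound the gained codegree sums and upper-bound the lost ones by comparing with the codegrees in $\mathbb{C}[V_1,V_2,V_3]$, corrected by $\mathcal{M}$ and $\mathcal{B}$. The genuine difference is how those corrections are controlled. The paper splits the affected pairs into ``high'' and ``low'' ones according to whether $d_{\mathcal{M}}(e)$ (resp.\ $d_{\mathcal{B}}(e)$) exceeds $\xi^{1/2}n$, shows there are at most $4\xi^{1/2}n$ high pairs, and uses the pointwise threshold for the rest; this produces error terms of order $\xi^{1/2}n^2$, which is exactly why hypothesis~(iii) carries the constant $47$. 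You instead sum the corrections over all $w$ at once via the handshake identity $\sum_{w}d_{\mathcal{M}}(aw)=2d_{\mathcal{M}}(a)\le 2\xi n^2$ (and likewise for $\mathcal{B}$), which yields errors of order only $\xi n^2$; this is cleaner and quantitatively stronger --- with your estimates any hypothesis of the form $d_{\mathcal{M}}(e_{\ast})\ge C\xi n$ would already suffice, so your remark that $47$ is ``precisely calibrated'' to your error constants is not accurate (it is calibrated to the paper's $\xi^{1/2}$-type losses), though of course the stated hypothesis is more than enough. Two trivial slips in your commentary, neither affecting the argument: a pair codegree $d_{\mathcal{M}}(aw)$ is at most $n-2$, not ``as large as $\xi n^2$'' (the legitimate point is only that no useful pointwise bound is available, since a single pair may have codegree of order $n$); and each discarded contribution from $w\in R\cap V_3$ should be bounded by $2n$ rather than $n$ --- both stay comfortably within your $O(\xi n^2)$ error budget.
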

\begin{proof}[Proof of Lemma~\ref{LEMMA:K43-L2-improvement-phase-one}]
    Let $\mathcal{G}$, $(V_1, V_2, V_3)$, and $e_{\ast}$ be as in the statement of the lemma.  
    By symmetry, we may assume that $e_{\ast} = \{u_1, u_2\} \subseteq V_1$.
    Let 
    \begin{align*}
        \mathcal{S}_{1}
        & \coloneqq \big\{ u_1 w \colon w\in N_{\mathcal{M}}(e_{\ast}) \big\}  \cup \big\{ u_2 w \colon w\in N_{\mathcal{M}}(e_{\ast}) \big\}, \\
        \mathcal{S}_{2}
        & \coloneqq \big\{ u_1 w \colon w\in N_{\mathcal{B}}(e_{\ast}) \cap V_1 \big\}  \cup \big\{ u_2 w \colon w\in N_{\mathcal{B}}(e_{\ast}) \cap V_1 \big\} \\
        \mathcal{S}_{3}
        & \coloneqq \big\{ u_1 w \colon w\in N_{\mathcal{B}}(e_{\ast}) \cap V_3 \big\}  \cup \big\{ u_2 w \colon w\in N_{\mathcal{B}}(e_{\ast}) \cap V_3 \big\}.
    \end{align*}
    Notice that 
    \begin{align*}
        |\mathcal{S}_{1}| = 2d_{\mathcal{M}}(e_{\ast}), \quad
        |\mathcal{S}_{2}| + |\mathcal{S}_{3}| = 2d_{\mathcal{B}}(e_{\ast}), \quad 
        |\mathcal{S}_{3}| = 2d_{\mathcal{B}^{\mathrm{bi}}}(e_{\ast}). 
    \end{align*}

    Let us consider the difference $\norm{\mathcal{G}^{\ast}}_{2} - \norm{\mathcal{G}}_{2}$. 
    Note that for each $e \in \mathcal{S}_{1}$, the codegree of $e$ increased by one, and for each $e \in \mathcal{S}_{2} \cup \mathcal{S}_{3}$, the the codegree of $e$ decreased by one. 
    Therefore, 
    \begin{align}\label{equ:local-adj-one-change}
        \norm{\mathcal{G}^{\ast}}_{2} - \norm{\mathcal{G}}_{2}
        & = \sum_{e \in \binom{V}{2}} \big(d_{\mathcal{G}^{\ast}}^{2}(e) - d_{\mathcal{G}}^{2}(e) \big) \notag \\
        & = \big(d_{\mathcal{G}^{\ast}}^{2}(e_{\ast}) - d_{\mathcal{G}}^{2}(e_{\ast}) \big) + \sum_{e \in \mathcal{S}_{1}} \big(d_{\mathcal{G}^{\ast}}^{2}(e) - d_{\mathcal{G}}^{2}(e) \big) + \sum_{e \in \mathcal{S}_{2} \cup \mathcal{S}_{3}} \big(d_{\mathcal{G}^{\ast}}^{2}(e) - d_{\mathcal{G}}^{2}(e) \big) \notag \\
        & \ge \big(d_{\mathcal{G}^{\ast}}(e_{\ast}) - d_{\mathcal{G}}(e_{\ast}) \big)\big(d_{\mathcal{G}^{\ast}}(e_{\ast}) + d_{\mathcal{G}}(e_{\ast}) \big) + \sum_{e \in \mathcal{S}_{1}} 2 d_{\mathcal{G}}(e) - \sum_{e \in \mathcal{S}_{2} \cup \mathcal{S}_{3}} 2 d_{\mathcal{G}}(e) \notag \\
        & = \big(d_{\mathcal{M}}(e_{\ast}) - d_{\mathcal{B}}(e_{\ast}) \big)\big(d_{\mathcal{G}^{\ast}}(e_{\ast}) + d_{\mathcal{G}}(e_{\ast}) \big) + \sum_{e \in \mathcal{S}_{1}} 2 d_{\mathcal{G}}(e) - \sum_{e \in \mathcal{S}_{2}\cup \mathcal{S}_{3}} 2 d_{\mathcal{G}}(e) \notag \\
        & \ge - 2 \xi n^2 + \sum_{e \in \mathcal{S}_{1}} 2 d_{\mathcal{G}}(e) - \sum_{e \in \mathcal{S}_{2} \cup \mathcal{S}_{3}} 2 d_{\mathcal{G}}(e), 
    \end{align}
    where the last inequality follows from part~\ref{LEMMA:K43-L2-improvement-phase-one-4} and the trivial upper bound $d_{\mathcal{G}^{\ast}}(e_{\ast}) + d_{\mathcal{G}}(e_{\ast}) \le n + n = 2n$. 

    Let $\mathcal{S}_{1}^{h} \coloneqq \left\{e \in \mathcal{S}_{1} \colon d_{\mathcal{M}}(e) \ge \xi^{1/2} n \right\}$. 
    Note that  
    \begin{align*}
        d_{\mathcal{M}}(u_1) + d_{\mathcal{M}}(u_2)
        \ge \frac{1}{2} \sum_{e \in \mathcal{S}_{1}^{h}} d_{\mathcal{M}}(e) 
        \ge \frac{1}{2} |\mathcal{S}_{1}^{h}| \cdot \xi^{1/2} n. 
    \end{align*}
    So it follows from $\Delta(\mathcal{M}) \le \xi n^2$ that 
    \begin{align*}
        |\mathcal{S}_{1}^{h}|
        \le \frac{2 \big( d_{\mathcal{M}}(u_1) + d_{\mathcal{M}}(u_2) \big)}{\xi^{1/2} n}
        \le \frac{4\xi n^2}{\xi^{1/2} n}
        = 4\xi^{1/2} n. 
    \end{align*}
    Combining this with part~\ref{LEMMA:K43-L2-improvement-phase-one-1} and the definition of $\mathcal{S}_{1}^{h}$, we obtain 
    \begin{align}\label{equ:local-adj-one-term-a}
        \sum_{e \in \mathcal{S}_{1}} d_{\mathcal{G}}(e)
        \ge \sum_{e \in \mathcal{S}_{1}\setminus \mathcal{S}_{1}^{h}} d_{\mathcal{G}}(e) 
        & \ge \sum_{e \in \mathcal{S}_{1}\setminus \mathcal{S}_{1}^{h}} \big( d_{\mathbb{C}[V_1, V_2, V_3]}(e) - d_{\mathcal{M}}(e) \big) \notag \\ 
        & \ge |\mathcal{S}_{1}\setminus \mathcal{S}_{1}^{h}| \left(|V_{1}|+|V_{3}|-1 - \xi^{1/2} n \right) \notag \\
        & \ge \left(|\mathcal{S}_{1}| - 4 \xi^{1/2} n\right)\left(\frac{2n}{3}- 4 \xi^{1/2} n \right) \notag \\
        & \ge \frac{2 n |\mathcal{S}_{1}|}{3} - \left(\frac{2 n}{3} +|\mathcal{S}_{1}|\right) \cdot 4\xi^{1/2} n 
        > \frac{2n |\mathcal{S}_{1}|}{3}  - 12 \xi^{1/2} n^2. 
    \end{align}
    
    Similarly, let $\mathcal{S}_{2}^{h} \coloneqq \left\{e \in \mathcal{S}_{2} \colon d_{\mathcal{B}}(e) \ge \xi^{1/2} n \right\}$. By the same argument as above, the bound $\Delta(\mathcal{B}) \le \xi n^2$ implies that $|\mathcal{S}_{2}^{h}| \le 4 \xi^{1/2} n$. 
    It follows from part~\ref{LEMMA:K43-L2-improvement-phase-one-5} that $|\mathcal{S}_{3}| \le 2 \xi n$. 
    Therefore, 
    \begin{align}\label{equ:local-adj-one-term-b}
        \sum_{e \in \mathcal{S}_{2}\cup \mathcal{S}_{3}} d_{\mathcal{G}}(e)
        & = \sum_{e \in \mathcal{S}_{2}\setminus \mathcal{S}_{2}^{h}} d_{\mathcal{G}}(e) + \sum_{e \in \mathcal{S}_{2}^{h} \cup \mathcal{S}_{3}} d_{\mathcal{G}}(e) \notag \\
        & \le |\mathcal{S}_{2}| \left(d_{\mathbb{C}[V_1, V_2, V_3]} + \xi^{1/2} n \right) + \big( |\mathcal{S}_{2}^{h}| + |\mathcal{S}_{3}| \big) n \notag \\
        & \le |\mathcal{S}_{2}| \left(\frac{n}{3} + \xi n + \xi^{1/2} n \right) + \big( 4 \xi^{1/2} n + 2 \xi n \big) n
        < \frac{n |\mathcal{S}_{2}|}{3}  + 10\xi^{1/2} n^2. 
    \end{align}
    Combining~\eqref{equ:local-adj-one-change},~\eqref{equ:local-adj-one-term-a}, and~\eqref{equ:local-adj-one-term-b}, and applying parts~\ref{LEMMA:K43-L2-improvement-phase-one-3} and~\ref{LEMMA:K43-L2-improvement-phase-one-4}, we obtain 
    \begin{align*}
        \norm{\mathcal{G}^{\ast}}_{2} - \norm{\mathcal{G}}_{2}
        & \ge - 2 \xi n^2 + 2\left(\frac{2n |\mathcal{S}_{1}|}{3}  - 12 \xi^{1/2} n^2\right) - 2\left(\frac{n |\mathcal{S}_{2}|}{3}  + 10\xi^{1/2} n^2 \right) \\
        & \ge \frac{4n}{3} \left(\frac{|\mathcal{S}_{1}|}{2} + \frac{|\mathcal{S}_{1}|}{2} - \frac{|\mathcal{S}_{2}|}{2}\right) - 46 \xi^{1/2} n^2 \\
        & = \frac{4n}{3} \left(d_{\mathcal{M}}(e^{\ast}) + d_{\mathcal{M}}(e^{\ast}) - d_{\mathcal{B}}(e^{\ast}) \right) - 46 \xi^{1/2} n^2 \\
        & \ge \frac{4n}{3} \left(47\xi^{1/2} n - \xi n \right) - 46 \xi^{1/2} n^2 
        > 0, 
    \end{align*}
    completing the proof of Lemma~\ref{LEMMA:K43-L2-improvement-phase-one}. 
\end{proof}%LEMMA

\begin{lemma}\label{LEMMA:K43-L2-improvement-phase-two}
    There exists $\xi_{\ref{LEMMA:K43-L2-improvement-phase-two}}>0$ such that the following holds for for every $\xi \in (0, \xi_{\ref{LEMMA:K43-L2-improvement-phase-two}})$ and every $n \ge 1/\xi$. 
    Let $\mathcal{G}$ be an $n$-vertex $3$-graph and let $V_{1} \cup V_{2} \cup V_{3} = V(\mathcal{G})$ be a partition. Let $e_{\ast} \in \partial\mathcal{H}$ be an edge in the shadow such that $e_{\ast} \in K[V_1, V_2, V_3]$. . 
    Suppose that  
    \begin{enumerate}[label=(\roman*)]
        \item\label{LEMMA:K43-L2-improvement-phase-two-1} $\max_{i \in [3]}|V_i - n/3| \le \xi n$, 
        \item\label{LEMMA:K43-L2-improvement-phase-two-2} $\max\left\{\Delta(\mathcal{M}),~\Delta(\mathcal{B}) \right\}\le \xi n^2$, 
        \item\label{LEMMA:K43-L2-improvement-phase-two-3} $d_{\mathcal{M}^{\mathrm{tri}}}(e_{\ast}) \ge 90\xi^{1/2} n$, 
        % $\mathcal{B}^{\mathrm{int}} = \emptyset$, 
        \item\label{LEMMA:K43-L2-improvement-phase-two-4} $d_{\mathcal{M}^{\mathrm{tri}}}(e_{\ast}) \ge d_{\mathcal{B}}(e_{\ast}) - \xi n$. 
    \end{enumerate}
    Then the new $3$-graph $\mathcal{G}^{\ast}\coloneqq \left( \mathcal{G} \setminus \mathcal{B}(e_{\ast}) \right) \cup \mathcal{M}(e_{\ast})$ satisfies 
    \begin{align*}
        \norm{\mathcal{G}^{\ast}}_{2}
        > \norm{\mathcal{G}}_{2}. 
    \end{align*}
\end{lemma}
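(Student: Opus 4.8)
The plan is to mimic the proof of Lemma~\ref{LEMMA:K43-L2-improvement-phase-one}: track how pair-codegrees change under $\mathcal{G}\mapsto\mathcal{G}^{\ast}$ and offset the gains against the losses. Using the cyclic symmetry of $\mathbb{C}[V_1,V_2,V_3]$ under $V_1\to V_2\to V_3\to V_1$, assume $e_{\ast}=\{u_1,u_2\}$ with $u_1\in V_1$ and $u_2\in V_2$. Then the triples of $\mathbb{C}[V_1,V_2,V_3]$ containing $e_{\ast}$ are exactly those with third vertex in $V_3$ (type $V_1V_2V_3$, giving $\mathcal{M}^{\mathrm{tri}}(e_{\ast})$) or in $V_1$ (type $V_1V_1V_2$, giving $\mathcal{M}^{\mathrm{bi}}(e_{\ast})$), while every bad edge through $e_{\ast}$ has third vertex in $V_2$; in particular $\mathcal{B}(e_{\ast})=\mathcal{B}^{\mathrm{bi}}(e_{\ast})$ automatically, which is why no analogue of hypothesis~\ref{LEMMA:K43-L2-improvement-phase-one-5} of Lemma~\ref{LEMMA:K43-L2-improvement-phase-one} is needed here.

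Next I would list the affected pairs: $e_{\ast}$ itself, whose codegree changes by $d_{\mathcal{M}}(e_{\ast})-d_{\mathcal{B}}(e_{\ast})$; the pairs in $\mathcal{S}_1^{\mathrm{tri}}\coloneqq\{u_1w,\,u_2w:w\in N_{\mathcal{M}^{\mathrm{tri}}}(e_{\ast})\}$ (other endpoint in $V_3$) and $\mathcal{S}_1^{\mathrm{bi}}\coloneqq\{u_1w,\,u_2w:w\in N_{\mathcal{M}^{\mathrm{bi}}}(e_{\ast})\}$ (other endpoint in $V_1$), each of which rises by one; and the pairs in $\mathcal{S}_2\coloneqq\{u_1w,\,u_2w:w\in N_{\mathcal{B}}(e_{\ast})\}$ (other endpoint in $V_2$), each of which falls by one. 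Since $u_1\in V_1$, $u_2\in V_2$ and $V_1,V_2,V_3$ are pairwise disjoint, all of these pairs are distinct. Expanding $\norm{\mathcal{G}^{\ast}}_2-\norm{\mathcal{G}}_2$ as a sum of the differences $d_{\mathcal{G}^{\ast}}^2(e)-d_{\mathcal{G}}^2(e)$ over them, the $e_{\ast}$-term equals $(d_{\mathcal{M}}(e_{\ast})-d_{\mathcal{B}}(e_{\ast}))(d_{\mathcal{G}^{\ast}}(e_{\ast})+d_{\mathcal{G}}(e_{\ast}))\ge-2\xi n^2$, because hypothesis~\ref{LEMMA:K43-L2-improvement-phase-two-4} gives $d_{\mathcal{B}}(e_{\ast})-d_{\mathcal{M}}(e_{\ast})\le d_{\mathcal{B}}(e_{\ast})-d_{\mathcal{M}^{\mathrm{tri}}}(e_{\ast})\le\xi n$ while $d_{\mathcal{G}^{\ast}}(e_{\ast})+d_{\mathcal{G}}(e_{\ast})\le 2n$; and the $\mathcal{S}_1^{\mathrm{bi}}$-contribution $\sum_{e\in\mathcal{S}_1^{\mathrm{bi}}}(2d_{\mathcal{G}}(e)+1)$ is nonnegative and may simply be discarded (this is precisely why the hypotheses need only control $d_{\mathcal{M}^{\mathrm{tri}}}$ rather than $d_{\mathcal{M}}$). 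It thus remains to bound $2\sum_{e\in\mathcal{S}_1^{\mathrm{tri}}}d_{\mathcal{G}}(e)-2\sum_{e\in\mathcal{S}_2}d_{\mathcal{G}}(e)$ from below.

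The heart of the estimate is the asymmetry of the ``ideal'' codegrees in $\mathbb{C}[V_1,V_2,V_3]$. Every pair in $\mathcal{S}_1^{\mathrm{tri}}$ lies in $V_1\times V_3$ or $V_2\times V_3$, hence has $\mathbb{C}$-codegree $\ge\tfrac{2n}{3}-2\xi n-1$; combining $d_{\mathcal{G}}(e)\ge d_{\mathbb{C}[V_1,V_2,V_3]}(e)-d_{\mathcal{M}}(e)$ with $\Delta(\mathcal{M})\le\xi n^2$ (which, exactly as in Lemma~\ref{LEMMA:K43-L2-improvement-phase-one}, forces at most $4\xi^{1/2}n$ of these pairs to satisfy $d_{\mathcal{M}}(e)\ge\xi^{1/2}n$) gives $\sum_{e\in\mathcal{S}_1^{\mathrm{tri}}}d_{\mathcal{G}}(e)\ge\tfrac{2n}{3}|\mathcal{S}_1^{\mathrm{tri}}|-C_1\xi^{1/2}n^2$ for an absolute constant $C_1$. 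For $\mathcal{S}_2$ one splits it as $\mathcal{S}_2^{(1)}\cup\mathcal{S}_2^{(2)}$, according to whether the pair is $u_1w$ (lying in $V_1\times V_2$, $\mathbb{C}$-codegree $\le\tfrac{2n}{3}+2\xi n$) or $u_2w$ (lying inside $V_2$, $\mathbb{C}$-codegree $\le\tfrac{n}{3}+\xi n$); since $|\mathcal{S}_2^{(1)}|=|\mathcal{S}_2^{(2)}|=d_{\mathcal{B}}(e_{\ast})$, $d_{\mathcal{G}}(e)\le d_{\mathbb{C}[V_1,V_2,V_3]}(e)+d_{\mathcal{B}}(e)$, and $\Delta(\mathcal{B})\le\xi n^2$ again leaves only $O(\xi^{1/2}n)$ exceptional pairs, this gives $\sum_{e\in\mathcal{S}_2}d_{\mathcal{G}}(e)\le\big(\tfrac{2n}{3}+\tfrac{n}{3}\big)d_{\mathcal{B}}(e_{\ast})+C_2\xi^{1/2}n^2=n\,d_{\mathcal{B}}(e_{\ast})+C_2\xi^{1/2}n^2$. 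With $|\mathcal{S}_1^{\mathrm{tri}}|=2d_{\mathcal{M}^{\mathrm{tri}}}(e_{\ast})$ we then obtain, absorbing the $-2\xi n^2$ from the $e_{\ast}$-term,
\begin{align*}
    \norm{\mathcal{G}^{\ast}}_2-\norm{\mathcal{G}}_2
    &\ge \tfrac{8n}{3}\,d_{\mathcal{M}^{\mathrm{tri}}}(e_{\ast})-2n\,d_{\mathcal{B}}(e_{\ast})-C\xi^{1/2}n^2 \\
    &\ge \tfrac{2n}{3}\,d_{\mathcal{M}^{\mathrm{tri}}}(e_{\ast})-C'\xi^{1/2}n^2,
\end{align*}
where the second inequality uses hypothesis~\ref{LEMMA:K43-L2-improvement-phase-two-4}, and then hypothesis~\ref{LEMMA:K43-L2-improvement-phase-two-3}, $d_{\mathcal{M}^{\mathrm{tri}}}(e_{\ast})\ge 90\xi^{1/2}n$, makes the right-hand side strictly positive once $\xi$ is small enough (the constant $90$ is comfortably larger than what is strictly needed, leaving room for the error constant $C'$). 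I do not expect a genuine obstacle here; the one point requiring care beyond Lemma~\ref{LEMMA:K43-L2-improvement-phase-one} is that the decreasing set $\mathcal{S}_2$ is no longer codegree-homogeneous and must be handled in two pieces (its $V_1V_2$-part and its $V_2$-internal part), together with the observation that the extra edges $\mathcal{M}^{\mathrm{bi}}(e_{\ast})$ contribute nonnegatively so that only $d_{\mathcal{M}^{\mathrm{tri}}}$ enters the hypotheses.
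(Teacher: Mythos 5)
Your proposal is correct and follows essentially the same route as the paper's proof: the same pair-by-pair codegree accounting around $e_{\ast}$, the same exceptional-pair argument from $\Delta(\mathcal{M}),\Delta(\mathcal{B})\le\xi n^2$, and the same splitting of the decreasing pairs into the $V_1V_2$-part and the $V_2$-internal part before invoking hypotheses (iii) and (iv). The only difference is cosmetic: you explicitly note that the $\mathcal{M}^{\mathrm{bi}}(e_{\ast})$-pairs contribute nonnegatively and may be discarded (the paper effectively works with $\mathcal{M}^{\mathrm{tri}}(e_{\ast})$ only), and you leave the final numerical constants implicit, which indeed fit comfortably under the factor $90\xi^{1/2}n$.
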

\begin{proof}[Proof of Lemma~\ref{LEMMA:K43-L2-improvement-phase-two}]
    Let $\mathcal{G}$, $(V_1, V_2, V_3)$, and $e_{\ast} = \{u_1, u_2\}$ be as in the statement of the lemma.  
    By symmetry, we may assume that $(u_1, u_2) \in V_1 \times V_2$.
    Let 
    \begin{align*}
        \mathcal{S}_{1}
        & \coloneqq \big\{ u_1 w \colon w\in N_{\mathcal{M}^{\mathrm{tri}}}(e_{\ast}) \big\}  \cup \big\{ u_2 w \colon w\in N_{\mathcal{M}^{\mathrm{tri}}}(e_{\ast}) \big\}, \\
        \mathcal{S}_{2a}
        & \coloneqq \big\{ u_1 w \colon w\in N_{\mathcal{B}}(e_{\ast}) \big\}  
        \quad\text{and}\quad 
        \mathcal{S}_{2b} 
        \coloneqq \big\{ u_2 w \colon w\in N_{\mathcal{B}}(e_{\ast}) \big\}.
    \end{align*}
    Let $\mathcal{S}_{2} \coloneqq \mathcal{S}_{2a} \cup \mathcal{S}_{2b}$. 
    Notice that 
    \begin{align*}
        |\mathcal{S}_{1}| = 2 d_{\mathcal{M}^{\mathrm{tri}}}(e_{\ast}), \quad 
        |\mathcal{S}_{2a}| = |\mathcal{S}_{2b}| = d_{\mathcal{B}}(e_{\ast}). 
    \end{align*}
    % 
    % Note that $\mathcal{S}_{2} \cup \{e_{\ast}\}$ contains all pairs whose codegree may decrease. 
    By the same argument as in Lemma~\ref{LEMMA:K43-L2-improvement-phase-one}, we have 
    \begin{align}\label{equ:local-adj-two-change}
        \norm{\mathcal{G}^{\ast}}_{2} - \norm{\mathcal{G}}_{2}
        & \ge \big(d_{\mathcal{M}^{\mathrm{tri}}}(e_{\ast}) - d_{\mathcal{B}}(e_{\ast}) \big)\big(d_{\mathcal{G}^{\ast}}(e_{\ast}) + d_{\mathcal{G}}(e_{\ast}) \big) + \sum_{e \in \mathcal{S}_{1}} 2 d_{\mathcal{G}}(e) - \sum_{e \in \mathcal{S}_{2}} 2 d_{\mathcal{G}}(e) \notag \\
        & \ge - 2 \xi n^2 + \sum_{e \in \mathcal{S}_{1}} 2 d_{\mathcal{G}}(e) - \sum_{e \in \mathcal{S}_{2}} 2 d_{\mathcal{G}}(e). 
    \end{align}
    Let $\mathcal{S}_{1}^{h} \coloneqq \left\{e \in \mathcal{S}_{1} \colon d_{\mathcal{M}^{\mathrm{tri}}}(e) \ge \xi^{1/2} n \right\}$ and $\mathcal{S}_{2}^{h} \coloneqq \left\{e \in \mathcal{S}_{2} \colon d_{\mathcal{B}}(e) \ge \xi^{1/2} n \right\}$. 
    Similar to the proof of Lemma~\ref{LEMMA:K43-L2-improvement-phase-one}, it follows from part~\ref{LEMMA:K43-L2-improvement-phase-two-2} that $\max\{|\mathcal{S}_{1}^{h}|,~|\mathcal{S}_{2}^{h}|\} \le 4\xi^{1/2} n$. 
    Similarly, we have 
    \begin{align*}
        \sum_{e \in \mathcal{S}_{1}} d_{\mathcal{G}}(e)
        > \frac{2n |\mathcal{S}_{1}|}{3}  - 12 \xi^{1/2} n^2, 
    \end{align*}
    and 
    \begin{align*}
        \sum_{e \in \mathcal{S}_{2}} d_{\mathcal{G}}(e)
        & = \sum_{e \in \mathcal{S}_{2} \setminus \mathcal{S}_{2}^{h}} d_{\mathcal{G}}(e) + \sum_{e\in \mathcal{S}_{2}^{h}} d_{\mathcal{G}}(e) \\
        & \le \sum_{e \in \mathcal{S}_{2} \setminus \mathcal{S}_{2}^{h}} \left(d_{\mathbb{C}[V_1, V_2, V_3]}(e) + \xi^{1/2} n \right)  + |\mathcal{S}_{2}^{h}| \cdot n \\
        & \le \sum_{e \in \mathcal{S}_{2a} \setminus \mathcal{S}_{2}^{h}}\left(\frac{2n}{3} + 2\xi + \xi^{1/2} n \right) + \sum_{e \in \mathcal{S}_{2b} \setminus \mathcal{S}_{2}^{h}}\left(\frac{n}{3} + \xi + \xi^{1/2} n \right) + 4\xi^{1/2} n^2 \\
        & \le \frac{2n|\mathcal{S}_{2a}|}{3} + \frac{n|\mathcal{S}_{2b}|}{3} + 9\xi^{1/2} n^2.  
    \end{align*}
    Combining these two inequalities with~\eqref{equ:local-adj-two-change}, and applying parts~\ref{LEMMA:K43-L2-improvement-phase-two-3} and~\ref{LEMMA:K43-L2-improvement-phase-two-4}, we obtain 
    \begin{align*}
        \norm{\mathcal{G}^{\ast}}_{2} - \norm{\mathcal{G}}_{2}
        & \ge - 2 \xi n^2 + 2\left(\frac{2n |\mathcal{S}_{1}|}{3}  - 12 \xi^{1/2} n^2\right) - 2\left(\frac{2n|\mathcal{S}_{2a}|}{3} + \frac{n|\mathcal{S}_{2b}|}{3} + 9\xi^{1/2} n^2\right) \\
        % & \ge \frac{2n}{3} \left( 4 d_{\mathcal{M}^{\mathrm{tri}}}(e_{\ast}) - 2 d_{\mathcal{B}}(e_{\ast}) - d_{\mathcal{B}}(e_{\ast}) \right) - 44\xi^{1/2} n^2 \\
        & = \frac{4n}{3}\left( \frac{|\mathcal{S}_{1}|}{4} + \frac{|\mathcal{S}_1|}{2} - |\mathcal{S}_{2a}| + \frac{|\mathcal{S}_{1}|}{4} - \frac{|\mathcal{S}_{2b}|}{2} \right) - 44 \xi^{1/2} n^2  \\
         & = \frac{4n}{3}\left( \frac{d_{\mathcal{M}^{\mathrm{tri}}}(e_{\ast})}{2}  + \frac{3}{4}\left(d_{\mathcal{M}^{\mathrm{tri}}}(e_{\ast}) - d_{\mathcal{B}}(e_{\ast})\right) \right) - 44 \xi^{1/2} n^2  \\
        % & = \frac{2n d_{\mathcal{M}^{\mathrm{tri}}}(e_{\ast})}{3} + 2n \left( d_{\mathcal{M}^{\mathrm{tri}}}(e_{\ast}) - d_{\mathcal{B}}(e_{\ast}) \right) - 32\xi^{1/2} n^2 \\
         % & \ge \frac{4n}{3}\left( \frac{d_{\mathcal{M}^{\mathrm{tri}}}(e_{\ast})}{2}  - \frac{3\xi n^2}{4} \right) - 44 \xi^{1/2} n^2 \\
        & \ge \frac{4n}{3}\left( \frac{90\xi^{1/2} n^2}{2}  - \frac{3\xi n^2}{4} \right) - 44 \xi^{1/2} n^2
        > 0.
    \end{align*}
    This completes the proof of Lemma~\ref{LEMMA:K43-L2-improvement-phase-two}. 
\end{proof}

%%%%%%%%%%%%%%%%%%%%%%%%%%%%%%%%%%%
\subsection{Proof of Theorem~\ref{THM:L2-exact-K43}}\label{SUBSEC:proof-L2-exact-K43}
In this subsection, we present the proof of Theorem~\ref{THM:L2-exact-K43}. 
\begin{proof}[Proof of Theorem~\ref{THM:L2-exact-K43}]
    Let $\delta, \delta_{1}, \delta_{2}, \delta_3, \delta_4 > 0$ be sufficiently small constants such that $\delta \ll \delta_{1} \ll \delta_{2} \ll \delta_3 \ll \delta_4 \ll 1$. 
    Let $n$ be a sufficiently large integer. 
    Let $\mathcal{H}$ be an $n$-vertex $K_{4}^{3}$-free $3$-graph on vertex set $V$ with $\norm{\mathcal{H}}_{2} = \mathrm{ex}_{\ell_2}(n,K_{4}^{3})$, that is, with the maximum possible $\ell_{2}$-norm. 
    Let $V_1 \cup V_2 \cup V_3 = V$ be a partition that maximizes the size of the intersection 
    \begin{align*}
        \mathcal{H}[V_1, V_2, V_3] \coloneqq \mathcal{H} \cap \mathbb{C}[V_1, V_2, V_3]. 
    \end{align*}
    Recall that $K[V_1, V_2, V_3]$ is the complete $3$-partite graph with parts $V_1$, $V_2$, and $V_3$. 
    Also, recall the definitions of $\mathcal{B}$, $\mathcal{M}$, $\mathcal{B}^{\mathrm{int}}$, $\mathcal{B}^{\mathrm{bi}}$, $\mathcal{M}^{\mathrm{tri}}$, and $\mathcal{M}^{\mathrm{bi}}$ from~\eqref{equ:def-bad-missing-triples},~\eqref{equ:def-B-int-bi}, and~\eqref{equ:def-M-tri-bi}, respectively. 
    We may assume that $\mathcal{B} \neq \emptyset$, since otherwise we are done. 

    \begin{claim}\label{CLAIM:K43-B-M-upper-bound}
        We have $\max\left\{ |\mathcal{M}|,~|\mathcal{B}| \right\} \le 2\delta_{1} n^3$. 
    \end{claim}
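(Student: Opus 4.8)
The plan is to bound $|\mathcal{M}|$ and $|\mathcal{B}|$ by combining the $\ell_2$-stability theorem of Balogh--Clemen--Lidick\'{y} (Theorem~\ref{THM:K43-L2-stability}) with the choice of $(V_1,V_2,V_3)$ as the partition maximizing $|\mathcal{H}\cap\mathbb{C}[V_1,V_2,V_3]|$. First I would observe that since $\mathcal{H}$ is extremal, $\norm{\mathcal{H}}_2 \ge \norm{\mathbb{C}_n}_2 \ge \tfrac{n^4}{6} - o(n^4)$ by Theorem~\ref{THM:BCL22-L2-density} (or just by exhibiting $\mathbb{C}_n$), so in particular $\norm{\mathcal{H}}_2 \ge (\tfrac16 - \delta_{\ref{THM:K43-L2-stability}})n^4$ for $n$ large. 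Applying Theorem~\ref{THM:K43-L2-stability} with a suitably small error parameter (say $\varepsilon \ll \delta_1$), there is \emph{some} balanced partition realizing $|\mathcal{H} \triangle \mathbb{C}_n| \le \varepsilon n^3$; since our chosen partition maximizes the intersection with the cyclic construction (hence minimizes the symmetric difference, up to the slack coming from the sizes of the parts), we get $|\mathcal{B}[V_1,V_2,V_3]| + |\mathcal{M}[V_1,V_2,V_3]| = |\mathcal{H} \triangle \mathbb{C}[V_1,V_2,V_3]| \le |\mathcal{H}\triangle \mathbb{C}_n| \le \varepsilon n^3 \le 2\delta_1 n^3$. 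The only subtlety here is that Theorem~\ref{THM:K43-L2-stability} as stated compares to the \emph{balanced} $\mathbb{C}_n$, so I need to argue that the near-optimal partition it produces is close to balanced (otherwise the ``reference'' partition used for maximality might be unbalanced); this follows because an unbalanced cyclic construction has strictly smaller $\ell_2$-norm (Fact~\ref{FACT:Cn-L2-max}, or more quantitatively Fact~\ref{FACT:C123-2norm} together with Lemma~\ref{LEMMA:3-part-inequality}), so a partition with one part of size far from $n/3$ would force $|\mathcal{H}\triangle\mathbb{C}[\cdot]|$ to be large, contradicting $\norm{\mathcal{H}}_2 \ge (\tfrac16-\delta_{\ref{THM:K43-L2-stability}})n^4$ — hence the maximizing partition is automatically near-balanced and the comparison goes through.

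More carefully, the argument splits into two parts. First, using $\norm{\mathcal{H}}_2 \ge (\tfrac16 - \delta)n^4$ and Theorem~\ref{THM:K43-L2-stability} (with parameter tuned so the conclusion gives error $\ll \delta_1 n^3$), fix a balanced partition $W_1 \cup W_2 \cup W_3 = V$ with $|\mathcal{H} \triangle \mathbb{C}[W_1,W_2,W_3]| \le \tfrac12\delta_1 n^3$, say. Then $|\mathcal{H} \cap \mathbb{C}[W_1,W_2,W_3]| \ge |\mathbb{C}[W_1,W_2,W_3]| - \tfrac12\delta_1 n^3 = \tfrac{n^4}{18}(1+o(1))$ roughly, but more to the point it is at least $|\mathbb{C}_n| - \tfrac12\delta_1 n^3$. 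By maximality of $(V_1,V_2,V_3)$, $|\mathcal{H}\cap\mathbb{C}[V_1,V_2,V_3]| \ge |\mathcal{H}\cap\mathbb{C}[W_1,W_2,W_3]|$. Second, I would use this lower bound on the intersection, together with the (not necessarily balanced) sizes $x_i = |V_i|/n$, to deduce that $x_i = \tfrac13 + o(1)$: by Fact~\ref{FACT:C123-2norm}, $|\mathbb{C}[V_1,V_2,V_3]| \le $ something controlled by $x_1x_2x_3 + \sum \tfrac{x_i^2 x_{i+1}}{2}$ times $n^4$, and Lemma~\ref{LEMMA:3-part-inequality} gives $x_1 x_2 x_3 + \sum_i \tfrac{x_i^2 x_{i+1}}{2} \le \tfrac{5}{54} - \tfrac{1}{50}\sum_i (x_i - \tfrac13)^2$. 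Since $|\mathcal{H} \cap \mathbb{C}[V_1,V_2,V_3]| \le |\mathbb{C}[V_1,V_2,V_3]|$ and the left side is $\ge \tfrac{5}{54}n^4 - o(n^4)$ (as $|\mathbb{C}_n| = \tfrac{5}{54}n^4 - o(n^4)$ — I'd double-check the constant, but the shape is what matters), we conclude $\sum_i (x_i - \tfrac13)^2 = o(1)$, i.e.\ $\max_i ||V_i| - n/3| = o(n)$. Finally, $|\mathcal{B}| + |\mathcal{M}| = |\mathcal{H} \triangle \mathbb{C}[V_1,V_2,V_3]|$; comparing $\mathbb{C}[V_1,V_2,V_3]$ to $\mathbb{C}[W_1,W_2,W_3]$, which differ in at most $O(\sum_i ||V_i|-|W_i||\cdot n^2) = o(n^3)$ edges since both partitions are near-balanced and we can match them up with small symmetric difference, together with $|\mathcal{H}\triangle\mathbb{C}[W_1,W_2,W_3]| \le \tfrac12 \delta_1 n^3$, yields $|\mathcal{B}| + |\mathcal{M}| \le 2\delta_1 n^3$, which is stronger than what is claimed.

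The main obstacle I anticipate is the bookkeeping needed to pass from the ``abstract'' near-optimal balanced partition $W$ guaranteed by Theorem~\ref{THM:K43-L2-stability} to \emph{our} partition $(V_1,V_2,V_3)$ defined by the maximality condition. One must rule out that the maximizing partition is wildly different from $W$ (e.g.\ a cyclic shift of the roles of the parts is harmless, but a genuinely different unbalanced partition would be a problem), and the cleanest way is the $\ell_2$-norm comparison via Fact~\ref{FACT:C123-2norm}/Lemma~\ref{LEMMA:3-part-inequality} sketched above, which forces near-balance and hence near-agreement of the two partitions up to relabeling. A secondary technical point is getting the quantitative relationship between the parameter in Theorem~\ref{THM:K43-L2-stability} and $\delta_1$ right, but this is just a matter of choosing $\delta \ll \delta_1$ and invoking the stability theorem with $\varepsilon = \delta_1$ (or $\delta_1/10$). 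Everything else is routine counting with the hierarchy $\delta \ll \delta_1 \ll \cdots \ll 1$.
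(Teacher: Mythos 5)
Your high-level route is the same as the paper's: extremality gives $\norm{\mathcal{H}}_2 \ge (\tfrac16-\delta)n^4$, Theorem~\ref{THM:K43-L2-stability} produces a balanced partition $W_1\cup W_2\cup W_3$ with $|\mathcal{H}\triangle\mathbb{C}[W_1,W_2,W_3]|$ small, and the maximality of $(V_1,V_2,V_3)$ transfers a lower bound on $|\mathcal{H}\cap\mathbb{C}[V_1,V_2,V_3]|$. However, your final counting step has a genuine gap. You bound $|\mathcal{B}|+|\mathcal{M}|=|\mathcal{H}\triangle\mathbb{C}[V_1,V_2,V_3]|$ by $|\mathcal{H}\triangle\mathbb{C}[W_1,W_2,W_3]|+|\mathbb{C}[V_1,V_2,V_3]\triangle\mathbb{C}[W_1,W_2,W_3]|$ and assert the second term is $o(n^3)$ ``since both partitions are near-balanced and we can match them up with small symmetric difference.'' Near-balance of the part \emph{sizes} does not make the partitions close as partitions: if, say, each $V_i$ meets each $W_j$ in about $n/9$ vertices, both partitions are perfectly balanced yet $|\mathbb{C}[V_1,V_2,V_3]\triangle\mathbb{C}[W_1,W_2,W_3]|=\Theta(n^3)$. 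So the inference ``near-balance, hence near-agreement up to relabeling'' is false, and the step where your argument actually needs it fails as written. (In the present situation the two constructions do share most edges, but only because both are close to $\mathcal{H}$ — e.g.\ via $|\mathbb{C}[V_1,V_2,V_3]\cap\mathbb{C}[W_1,W_2,W_3]|\ge|\mathcal{H}\cap\mathbb{C}[V_1,V_2,V_3]|-|\mathcal{H}\setminus\mathbb{C}[W_1,W_2,W_3]|$ — which is not the argument you gave.)

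The clean fix, and what the paper does, is to never compare the two constructions edge-by-edge. From stability and maximality, $|\mathcal{H}\cap\mathbb{C}[V_1,V_2,V_3]|\ge|\mathcal{H}\cap\mathbb{C}_n|\ge|\mathbb{C}_n|-\delta_1 n^3$. Then $|\mathcal{M}|=|\mathbb{C}[V_1,V_2,V_3]|-|\mathcal{H}\cap\mathbb{C}[V_1,V_2,V_3]|\le|\mathbb{C}_n|-\left(|\mathbb{C}_n|-\delta_1 n^3\right)\le 2\delta_1 n^3$, where the only extra ingredient is the edge-count comparison $|\mathbb{C}[V_1,V_2,V_3]|\le|\mathbb{C}_n|$ (an $\ell_1$ statement: the balanced partition maximizes the number of triples of the cyclic construction, which follows from Lemma~\ref{LEMMA:3-part-inequality} up to an $O(n^2)$ term), and $|\mathcal{B}|=|\mathcal{H}|-|\mathcal{H}\cap\mathbb{C}[V_1,V_2,V_3]|\le\left(|\mathbb{C}_n|+\delta_1 n^3\right)-\left(|\mathbb{C}_n|-\delta_1 n^3\right)=2\delta_1 n^3$. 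This is exactly the missing ``slack from the sizes of the parts'' you flagged in your first paragraph; it is handled by the inequality $|\mathbb{C}[V_1,V_2,V_3]|\le|\mathbb{C}_n|$, not by closeness of the partitions. Your near-balance deduction via Lemma~\ref{LEMMA:3-part-inequality} is correct but is not needed for this claim; in the paper it appears as the separate, subsequent Claim~\ref{CLAIM:K43-L2-xi-size}.
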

    \begin{proof}[Proof of Claim~\ref{CLAIM:K43-B-M-upper-bound}]
        Since $\mathbb{C}_{n}$ is $K_{4}^{3}$-free, we have 
        \begin{align*}
            \norm{\mathcal{H}}_{2}
            = \mathrm{ex}_{\ell_2}(n,K_{4}^{3})
            \ge \norm{\mathbb{C}_{n}}_{2}
            \ge \frac{n^4}{6} - \delta n^{4}. 
        \end{align*}
        It follows from Theorem~\ref{THM:K43-L2-stability} that $|\mathcal{H} \triangle \mathbb{C}_{n}| \le \delta_1 n^3$. 
        Then the choice of the partition $V_1 \cup V_2 \cup V_3 = V$ ensures that $|\mathcal{H}\cap\mathbb{C}[V_1, V_2, V_3]| \ge |\mathcal{H}\cap\mathbb{C}_n|$, which implies  
        \begin{align}\label{equ:H-cap-C-V1V2V3}
            |\mathcal{H}[V_1, V_2, V_3]| 
            \ge |\mathbb{C}_{n}| - \delta_1 n^3
            \ge \frac{5 n^3}{54}  - 2\delta_1 n^3, 
        \end{align}
        It follows that 
        \begin{align*}%\label{equ:K43-missing-triple}
            |\mathcal{M}| 
            = |\mathbb{C}[V_1, V_2, V_3]| - |\mathcal{H}[V_1, V_2, V_3]| 
            \le |\mathbb{C}_{n}| - |\mathcal{H}[V_1, V_2, V_3]| 
            \le 2\delta_{1} n^3. 
        \end{align*}
        Note that $|\mathcal{H} \triangle \mathbb{C}_{n}| \le \delta_1 n^3$ implies $|\mathcal{H}| \le |\mathbb{C}_{n}| + \delta_1 n^3$. 
        Combining it with~\eqref{equ:H-cap-C-V1V2V3}, we obtain 
        \begin{align*}
            |\mathcal{B}|
            = |\mathcal{H}| - |\mathcal{H}[V_1, V_2, V_3]|
            \le |\mathbb{C}_{n}| + \delta_1 n^3 - \left(|\mathbb{C}_{n}| - \delta_1 n^3\right)
            = 2\delta_1 n^3. 
        \end{align*}
        This completes the proof of Claim~\ref{CLAIM:K43-B-M-upper-bound}. 
    \end{proof}%CLAIM

    Let $x_i \coloneqq |V_i|/n$ for $i \in [3]$. 
    \begin{claim}\label{CLAIM:K43-L2-xi-size}
        We have $\max_{i \in [3]}|x_i - 1/3| \le 10\delta_{1}^{1/2}$. 
    \end{claim}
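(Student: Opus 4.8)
The plan is to extract the bound on $x_i-1/3$ from the near-optimality of $|\mathbb{C}[V_1,V_2,V_3]|$ combined with the stability-type inequality of Lemma~\ref{LEMMA:3-part-inequality}. The key observation is that the choice of partition already forces $\mathbb{C}[V_1,V_2,V_3]$ to be almost as large as $\mathbb{C}_n$, and the polynomial governing $|\mathbb{C}[V_1,V_2,V_3]|$ is uniquely maximized at the balanced point.

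First I would record a lower bound: since $\mathcal{H}[V_1,V_2,V_3]=\mathcal{H}\cap\mathbb{C}[V_1,V_2,V_3]$ is a subgraph of $\mathbb{C}[V_1,V_2,V_3]$, inequality~\eqref{equ:H-cap-C-V1V2V3} gives
\begin{align*}
    |\mathbb{C}[V_1,V_2,V_3]| \ge |\mathcal{H}[V_1,V_2,V_3]| \ge \frac{5n^3}{54} - 2\delta_1 n^3.
\end{align*}
Next I would compute an upper bound for $|\mathbb{C}[V_1,V_2,V_3]|$ in terms of the $x_i$. Counting the four edge types exactly yields $|\mathbb{C}[V_1,V_2,V_3]| = |V_1||V_2||V_3| + \sum_{i\in[3]}\binom{|V_i|}{2}|V_{i+1}|$; discarding the lower-order terms (harmless since we only need an upper bound) gives
\begin{align*}
    |\mathbb{C}[V_1,V_2,V_3]| \le \left( x_1 x_2 x_3 + \frac{x_1^2 x_2}{2} + \frac{x_2^2 x_3}{2} + \frac{x_3^2 x_1}{2}\right) n^3.
\end{align*}
Because $x_1+x_2+x_3 = |V_1\cup V_2\cup V_3|/n = 1$ exactly, Lemma~\ref{LEMMA:3-part-inequality} applies and bounds the parenthesized expression by $\frac{5}{54} - \frac{1}{50}\bigl((x_1-\tfrac13)^2+(x_2-\tfrac13)^2+(x_3-\tfrac13)^2\bigr)$.

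Finally I would combine the two estimates: cancelling the common $\frac{5}{54}n^3$ term yields $\frac{1}{50}\sum_{i\in[3]}(x_i-\tfrac13)^2 \le 2\delta_1$, hence $\sum_{i\in[3]}(x_i-\tfrac13)^2 \le 100\,\delta_1$, and in particular $|x_i - 1/3| \le 10\,\delta_1^{1/2}$ for every $i\in[3]$, which is exactly the claim. The argument is routine; the only points needing a moment's care are to note that $x_1+x_2+x_3=1$ holds with equality (so Lemma~\ref{LEMMA:3-part-inequality} is applicable) and to keep the inequality direction correct when passing from $\mathbb{C}[V_1,V_2,V_3]$ down to $\mathcal{H}[V_1,V_2,V_3]$. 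I do not anticipate any genuine obstacle here.
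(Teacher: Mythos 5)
Your proposal is correct and follows essentially the same route as the paper: both chain $\frac{5}{54}-2\delta_1 \le |\mathcal{H}[V_1,V_2,V_3]|/n^3 \le |\mathbb{C}[V_1,V_2,V_3]|/n^3 \le x_1x_2x_3+\frac{x_1^2x_2}{2}+\frac{x_2^2x_3}{2}+\frac{x_3^2x_1}{2}$ via \eqref{equ:H-cap-C-V1V2V3} and then invoke Lemma~\ref{LEMMA:3-part-inequality} to force $(x_i-\tfrac13)^2\le 100\delta_1$. The only cosmetic difference is that you carry the sum of squares while the paper states the bound for the maximum, which changes nothing.
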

    \begin{proof}[Proof of Claim~\ref{CLAIM:K43-L2-xi-size}]
        It follows from~\eqref{equ:H-cap-C-V1V2V3} that 
        \begin{align*}
            \frac{5}{54} - 2\delta_1 
            \le \frac{|\mathcal{H}[V_1, V_2, V_3]|}{n^3}
            \le \frac{|\mathbb{C}[V_1, V_2, V_3]|}{n^3}
            \le  x_1 x_2 x_3 + \frac{x_1^2 x_2}{2} + \frac{x_2^2 x_3}{2} + \frac{x_3^2 x_1}{2}. 
        \end{align*}
        Combining this with and Lemma~\ref{LEMMA:3-part-inequality}, we obtain 
        \begin{align*}
            \max_{i\in [3]} \frac{1}{50}\left(x_i - \frac{1}{3}\right)^2
            \le 2\delta_{1}, 
        \end{align*}
        which implies that $\max_{i \in [3]}|x_i - 1/3| \le 10\delta_{1}^{1/2}$. 
    \end{proof}

    \begin{claim}\label{CLAIM:max-deg-B}
        We have $\max\left\{ \Delta(\mathcal{B}),~\Delta(\mathcal{M}) \right\} \le 2 \delta_{3} n^2$. 
    \end{claim}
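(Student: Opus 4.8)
The plan is to use the fact that $\mathcal{H}$ is near-extremal in the $\ell_2$-norm together with the $\ell_2$-degree regularity from Lemma~\ref{LEMMA:2norm-degree-regular} to force an upper bound on the degree of every vertex, and hence on the codegree of every pair. First I would argue that no vertex can have codegree bounded away from the ``correct'' value: if some pair $e$ had $d_{\mathcal H}(e)$ much larger than the codegree it would have in $\mathbb{C}_n$, then the $\ell_2$-norm contributed by that pair would exceed what the stability estimate $|\mathcal H \triangle \mathbb{C}_n| \le \delta_1 n^3$ permits, or more precisely, one deduces from $\norm{\mathcal H}_2 \ge n^4/6 - \delta n^4$ and Fact~\ref{FACT:C123-2norm} that $\mathcal H$ cannot ``waste'' too much $\ell_2$-mass on atypical pairs. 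The cleaner route, however, is through vertex degrees: from Lemma~\ref{LEMMA:2norm-degree-regular}, $|s_{\mathcal H}(v) - s(\mathcal H)| \le 60 n^2$ for every $v$, and since $\norm{\mathcal H}_2 = \sum_v s_{\mathcal H}(v)$ is essentially $n^4/6$, we get $s_{\mathcal H}(v) = n^3/6 + O(n^2)$ for all $v$.

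Next I would translate this into a bound on $d_{\mathcal H}(v)$ and on $\sum_{e \in L_{\mathcal H}(v)} d_{\mathcal H}(e)$ via the identity \eqref{equ:def-2norm-degree-b}, $s_{\mathcal H}(v) = \norm{L_{\mathcal H}(v)}_2 + 2\sum_{e \in L_{\mathcal H}(v)} d_{\mathcal H}(e) - d_{\mathcal H}(v)$. In $\mathbb{C}_n$ each vertex has $\ell_2$-degree close to $n^3/6$, and this is attained with link graph of size roughly $(5/9)\binom{n}{2} \cdot (\text{correction})$ and codegrees bounded by roughly $2n/3$; the point is that the quantity $\norm{L_{\mathcal H}(v)}_2 + 2\sum_{e} d_{\mathcal H}(e)$ is a sum of nonnegative terms, each controlled by the codegrees $d_{\mathcal H}(e) \le n$. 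If some pair $uv$ had $d_{\mathcal H}(uv) \ge 2\delta_3 n^2 \gg n$ we already have a contradiction since codegrees are trivially at most $n$ — so in fact the statement $\Delta(\mathcal B), \Delta(\mathcal M) \le 2\delta_3 n^2$ must be about the \emph{degrees} $d_{\mathcal B}(v) = |L_{\mathcal B}(v)|$ in the link sense, i.e.\ the number of bad (resp.\ missing) pairs containing $v$, which can be as large as $\Theta(n^2)$. With that reading: I would bound $\sum_v d_{\mathcal B}(v) = 2|\mathcal B| \le 4\delta_1 n^3$ by Claim~\ref{CLAIM:K43-B-M-upper-bound}, giving an \emph{average} of $O(\delta_1 n^2)$, and then upgrade the average bound to a pointwise bound using the $\ell_2$-regularity: a vertex $v$ with $d_{\mathcal B}(v) \ge 2\delta_3 n^2$ would either contribute too much to $\norm{L_{\mathcal H}(v)}_2$ (if the bad pairs at $v$ have large codegree) or, since the bad edges carry codegree, would force $s_{\mathcal H}(v)$ away from $n^3/6 + O(n^2)$.

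The concrete mechanism I expect: decompose $s_{\mathcal H}(v)$ as the contribution from pairs that are ``correct'' for $\mathbb{C}[V_1,V_2,V_3]$ plus the contribution from bad/missing pairs at $v$. Since $\sum_v d_{\mathcal B}(v), \sum_v d_{\mathcal M}(v) = O(\delta_1 n^3)$, all but $O(\delta_1^{1/2} n)$ vertices have $d_{\mathcal B}(v), d_{\mathcal M}(v) \le \delta_1^{1/2} n^2 \le \delta_3 n^2$; for those the claim holds directly. For the remaining few ``heavy'' vertices, I would combine the maximality of $\norm{\mathcal H}_2$ with a deletion-and-re-insertion argument: delete $v$ and re-add it as a clone of a typical vertex $w$ of the same part (in the spirit of Lemma~\ref{LEMMA:2norm-degree-regular}), which changes the $\ell_2$-norm by $s_{\mathcal H}(w) + O(n^2) - s_{\mathcal H}(v)$; if $v$ were heavy in bad edges, $s_{\mathcal H}(v)$ would be strictly smaller than $s_{\mathcal H}(w)$ unless the bad pairs at $v$ have unexpectedly large codegrees, and a large-codegree bad pair is precisely the input to Lemma~\ref{LEMMA:K43-L2-improvement-phase-one} or~\ref{LEMMA:K43-L2-improvement-phase-two}, contradicting extremality.

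The main obstacle I anticipate is the bookkeeping that separates the two failure modes — ``many bad pairs of small codegree at $v$'' versus ``a bad pair of large codegree at $v$'' — and ensuring that in the first mode the $\ell_2$-regularity (Lemma~\ref{LEMMA:2norm-degree-regular}) genuinely rules it out rather than merely the $\ell_1$ count. I would handle this by noting that $\norm{L_{\mathcal H}(v)}_2 \ge \frac{1}{|\partial L_{\mathcal H}(v)|}\big(\sum_e d_{\mathcal H}(e)\big)^2 \ge \frac{4}{n^2}|L_{\mathcal H}(v)|^2$ by Cauchy–Schwarz, so an abnormally large link forces an abnormally large $\ell_2$-degree unless codegrees are abnormally small, and small-codegree links are incompatible with $v$ lying in a near-$\mathbb{C}_n$ hypergraph whose typical vertex has codegrees $\sim n/3$ to $\sim 2n/3$; pushing this through should pin $d_{\mathcal H}(v)$, hence $d_{\mathcal B}(v)$ and $d_{\mathcal M}(v)$, to within $O(\delta_3 n^2)$ of the $\mathbb{C}_n$ value.
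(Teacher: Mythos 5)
Your reading of the statement is right (it concerns the vertex degrees $d_{\mathcal B}(v)$, $d_{\mathcal M}(v)$, which can be $\Theta(n^2)$), and your opening move — lower-bound $s_{\mathcal H}(v)$ for every $v$ via Lemma~\ref{LEMMA:2norm-degree-regular} and then analyze the link through \eqref{equ:def-2norm-degree-b} — matches the paper's first step. (Minor slip: $\sum_v s_{\mathcal H}(v)=4\norm{\mathcal H}_2-3|\mathcal H|$, not $\norm{\mathcal H}_2$, so the correct pointwise value is $s_{\mathcal H}(v)\approx \tfrac{2}{3}n^3$, not $n^3/6$.) But the heart of your argument has a genuine gap. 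The averaging step only handles all but a few vertices, and for a ``heavy'' vertex $v$ your mechanism is: clone a typical vertex in place of $v$ and argue that many bad pairs at $v$ force $s_{\mathcal H}(v)$ down ``unless the bad pairs have unexpectedly large codegrees,'' in which case you invoke Lemma~\ref{LEMMA:K43-L2-improvement-phase-one} or~\ref{LEMMA:K43-L2-improvement-phase-two}. This fails on two counts. First, it is circular: hypothesis (ii) of both of those lemmas is precisely $\max\{\Delta(\mathcal M),\Delta(\mathcal B)\}\le\xi n^2$, the statement you are trying to prove, and their other hypotheses require a pair with large \emph{missing} codegree $d_{\mathcal M}(e_\ast)$ dominating $d_{\mathcal B}(e_\ast)$, not merely ``a bad pair of large codegree.'' Second, and more fundamentally, a vertex with $d_{\mathcal B}(v)\gg \delta_3 n^2$ need not have small $s_{\mathcal H}(v)$ at all: the bad pairs contribute to $\norm{L}_2$ and to $\sum_{e\in L}d_{\mathcal H}(e)$ just as legitimate pairs do, so the link could have near-maximal $\ell_2$-norm while having entirely the wrong structure relative to $(V_1,V_2,V_3)$. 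Nothing in $\ell_2$-regularity, Cauchy--Schwarz, or the $\ell_1$ count $|\mathcal B|\le 2\delta_1 n^3$ rules this out.

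What closes this gap in the paper is exactly the machinery you did not use: the link $L=L_{\mathcal H}(v)$ is almost cyclically triangle-free (every triangle of the four cyclic types in $L$ would complete a $K_4^3$ with $v$ unless it lies in $\mathcal M$, and $|\mathcal M|\le 2\delta_1 n^3$), it is locally maximal because the partition was chosen to maximize $|\mathcal H\cap\mathbb C[V_1,V_2,V_3]|$, and then Theorem~\ref{THM:3-colored-Mantel-L2-norm} gives $\norm{L}_2\le \tfrac{n^3}{3}+\delta_2 n^3$. Feeding this into the lower bound on $s_{\mathcal H}(v)$ forces $|L|+|L[V_1,V_2,V_3]|$ to be near-extremal, whereupon the stability theorem (Theorem~\ref{THM:3-colored-Mantel-stability}) says that for some index $i$ the quantity $|L[V_i]|+|L[V_{i+1}]|+|L[V_i,V_{i+2}]|$ is at most $\delta_3 n^2$; the local-maximality inequalities pin down $i=1$, and that quantity is exactly $d_{\mathcal B}(v)$. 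The bound on $d_{\mathcal M}(v)$ then needs one more input, Theorem~\ref{THM:3-partite-Mantel}, to cap $|L[V_1,V_2,V_3]|$ and hence lower-bound $|L|$. In short, the claim is not a consequence of near-extremality plus regularity plus counting; it rests on the colored Mantel theorems (whose proof in the paper is flag-algebra assisted), and your proposal is missing that ingredient while leaning on lemmas whose hypotheses presuppose the conclusion.
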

    \begin{proof}[Proof of Claim~\ref{CLAIM:max-deg-B}]
        Fix an arbitrary vertex $v \in V$. By symmetery, we may assume that $v\in V_1$. 
        For convenience, let $L \coloneqq L_{\mathcal{H}}(v)$. 
        Recall from~\eqref{equ:def-2norm-degree-b} that 
        \begin{align}\label{equ:K43-v-2norm-degree}
            s_{\mathcal{H}}(v)
            = \norm{L}_{2} + 2\sum_{e\in L} d_{\mathcal{H}}(e) - |L|. 
        \end{align}
        It follows from Lemma~\ref{LEMMA:2norm-degree-regular} that 
        \begin{align}\label{equ:min-L2-deg}
            s_{\mathcal{H}}(v)
            \ge s(\mathcal{H}) - 60 n^2 
            \ge s(\mathbb{C}_{n}) - 60 n^2 
            \ge \frac{2n^3}{3} - \delta n^3. 
        \end{align}
        It follows from the maximality of $\mathcal{H} \cap \mathbb{C}[V_1, V_2, V_3]$ that moving $v$ from $V_1$ to $V_2$ and $V_3$ will not increase the number of edges in the intersection. 
        Therefore, 
        \begin{align}
            |L[V_1, V_2]| + |L[V_3]| & \ge |L[V_1, V_3]| + |L[V_1]|, \label{equ:K43-proof-local-max-a} \\ 
            |L[V_2, V_3]| + |L[V_3]| & \ge |L[V_1, V_3]| + |L[V_2]|. \label{equ:K43-proof-local-max-b}
        \end{align}
        Recall that $L[V_1, V_2, V_3] \coloneqq K[V_1, V_2, V_2] \cap L$ is the induced $3$-partite subgraph of $L$ with parts $V_1, V_2, V_3$. 
        For convenience, let $\overline{L}[V_1, V_2, V_3] \coloneqq L \setminus L[V_1, V_2, V_3]$. 
        It follows from Claims~\ref{CLAIM:K43-B-M-upper-bound} and~\ref{CLAIM:K43-L2-xi-size} that 
        \begin{align*}
            \sum_{e\in L} d_{\mathcal{H}}(e)
            & \le \sum_{e\in L} \left( d_{\mathbb{C}[V_1, V_2, V_3]}(e) + d_{\mathcal{B}}(e) \right) \\
            & = \sum_{e\in \overline{L}[V_1, V_2, V_3]} d_{\mathbb{C}[V_1, V_2, V_3]}(e) + \sum_{e\in L[V_1, V_2, V_3]} d_{\mathbb{C}[V_1, V_2, V_3]}(e) + \sum_{e\in L} d_{\mathcal{B}}(e) \\
            & \le \left(\frac{n}{3} + 10 \delta_{1}^{1/2} n\right)|L \setminus L[V_1, V_2, V_3]| + 2\left(\frac{n}{3} + 10 \delta_{1}^{1/2} n\right) |L[V_1, V_2, V_3]| + 3|\mathcal{B}| \\
            & = \frac{n}{3}\left(|L| + |L[V_1, V_2, V_3]|\right) + 10 \delta_{1}^{1/2} n \left(|L| + |L[V_1, V_2, V_3]|\right) + 6\delta_1 n^3 \\
            & \le \frac{n}{3}\left(|L| + |L[V_1, V_2, V_3]|\right) + 16 \delta_{1}^{1/2} n^3. 
        \end{align*}
        Combining this with~\eqref{equ:K43-v-2norm-degree} and~\eqref{equ:min-L2-deg}, we obtain 
        \begin{align}\label{equ:K43-link-2norm-1norm-lower-bound}
            \norm{L}_{2} + \frac{2n}{3} \left(|L| + |L[V_1, V_2, V_3]|\right) 
            \ge s_{\mathcal{H}}(v) - 16 \delta_{1}^{1/2} n^3
            \ge \frac{2}{3} n^3 - 17 \delta_{1}^{1/2} n^3. 
        \end{align}
        Let $\mathcal{T}$ be the set of triangles in $L$ of types $V_1V_2V_3$, $V_1V_1V_2$, $V_2V_2V_3$, and $V_3V_3V_1$, that is, 
        \begin{align*}
            \mathcal{T}
            \coloneqq \big\{ S \in \mathbb{C}[V_1, V_2, V_3] \colon L[S] \cong K_{3} \big\}. 
        \end{align*}
        Suppose, for contradiction, that there exists a triple $S \in \mathcal{T} \setminus \mathcal{M}$. 
        Then the set $S \cup \{v\}$ would induce a copy of $K_{4}^{3}$ in $\mathcal{H}$, contradicting the $K_{4}^{3}$-freeness of $\mathcal{H}$. 
        Thus, $\mathcal{T} \subseteq \mathcal{M}$, and consequently, by Claim~\ref{CLAIM:K43-B-M-upper-bound},
        \begin{align}\label{equ:K43-L2-triangles-upper-bound}
            |\mathcal{T}|
            \le |\mathcal{M}|
            \le 2\delta_1 n^3. 
        \end{align}
        Combining this with Claim~\ref{CLAIM:K43-L2-xi-size},~\eqref{equ:K43-proof-local-max-a}, and~\eqref{equ:K43-proof-local-max-b}, it follows from Theorem~\ref{THM:3-colored-Mantel-L2-norm} that 
        \begin{align*}
            \norm{L}_{2}
            \le 9\left(\frac{n}{3}\right)^{3} + \delta_{2} n^3 
            = \frac{n^3}{3} + \delta_2 n^3. 
        \end{align*}
        It then follows from~\eqref{equ:K43-link-2norm-1norm-lower-bound} that 
        \begin{align}\label{equ:K43-link-1norm-lower-bound}
            |L[V_1, V_2, V_3]| + |L|
            & \ge \frac{3}{2n} \left(\frac{2}{3} n^3 - 17 \delta_{1}^{1/2} n^3 - \frac{n^3}{3} - \delta_2 n^3 \right)
            \ge \frac{9}{2}\left(\frac{n}{3}\right)^2 - 3\delta_2 n^2. 
        \end{align}
        Combining this with~\eqref{equ:K43-L2-triangles-upper-bound} and Theorem~\ref{THM:3-colored-Mantel-stability}, we conclude that for some $i \in [3]$,
        \begin{align}\label{equ:K43-bad-links}
            |L[V_{i}]| + |L[V_{i+1}]| + |L[V_{i}, V_{i+2}]|
            \le \delta_{3} n^2. 
        \end{align}
        We claim that $i = 1$. 
        Suppose to the contrary that $i = 2$. 
        Then it follows from~\eqref{equ:K43-proof-local-max-a} that 
        \begin{align*}
            |L[V_1, V_3]| + |L[V_1]|
            \le |L[V_1, V_2]| + |L[V_3]|
            \le \delta_{3} n^2. 
        \end{align*}
        Consequently, 
        \begin{align*}
            |L|
            & = \left(|L[V_{2}]| + |L[V_{3}]| + |L[V_{2}, V_{1}]|\right) + \left(|L[V_1, V_3]| + |L[V_1]|\right) + |L[V_2, V_3]| \\
            & \le \delta_{3} n^2 + \delta_{3} n^2 + \left(\frac{n}{3}+10\delta_{1}^{1/2} n\right)^2 
            < \frac{9}{2}\left(\frac{n}{3}\right)^2 - 3\delta_{2} n^2, 
        \end{align*}
        a contradiction to~\eqref{equ:K43-link-1norm-lower-bound}. 
        Thus, $i \neq 2$. 
        Similarly, using~\eqref{equ:K43-proof-local-max-b}, one can show that $i \neq 3$. 
        Therefore, $i = 1$, and hence, 
        \begin{align*}
            d_{\mathcal{B}}(v)
            = |L[V_{1}]| + |L[V_{2}]| + |L[V_{1}, V_{3}]|
            \le \delta_{3} n^2. 
        \end{align*}
        This proves that $\Delta(\mathcal{B}) \le \delta_{3} n^2$. 

        It remains to show that $\Delta(\mathcal{M}) \le 2 \delta_{3} n^2$.
        It follows from~\eqref{equ:K43-L2-triangles-upper-bound} and Theorem~\ref{THM:3-partite-Mantel} that 
        \begin{align*}
            |L[V_1, V_2, V_3]|
            \le 2 \left(\frac{n}{3}\right)^2 + \delta_2 n^2. 
        \end{align*}
        Combining this with~\eqref{equ:K43-link-1norm-lower-bound}, we obtain 
        \begin{align*}
            |L|
            & \ge \frac{9}{2}\left(\frac{n}{3}\right)^2 - 3\delta_2 n^2 - \left(2 \left(\frac{n}{3}\right)^2 + \delta_2 n^2 \right)
            = \frac{5}{2} \left(\frac{n}{3}\right)^2 - 5\delta_2 n^2. 
        \end{align*}
        It then follows from Claim~\ref{CLAIM:K43-L2-xi-size} and~\eqref{equ:K43-bad-links} that 
        \begin{align*}
            d_{\mathcal{M}}(v)
            & = d_{\mathbb{C}[V_1, V_2, V_3]}(v) - \left(|L[V_1,V_2]|+|L[V_2, V_3]| + |L[V_3]|\right) \\
            & \le d_{\mathbb{C}[V_1, V_2, V_3]}(v) - \big( |L| - \left( |L[V_1]|+|L[V_2]| + |L[V_1,V_3]| \right) \big) \\
            & \le \frac{5}{2}\left(\frac{n}{3} + 10\delta_{1}^{1/2} n\right)^2 - \left( \frac{5}{2} \left(\frac{n}{3}\right)^2 - 5\delta_2 n^2 - \delta_{3} n^2 \right) 
            \le 2 \delta_{3} n^2. 
        \end{align*}
        This proves that $\Delta(\mathcal{M}) \le 2 \delta_{3} n^2$, and completes the proof of Claim~\ref{CLAIM:max-deg-B}. 
    \end{proof}%CLAIM

    \begin{claim}\label{CLAIM:K43-bad-contains-heavy}
        Every bad edge $E \in \mathcal{B}$ contains a pair $e\subseteq E$ such that $d_{\mathcal{M}}(e) \ge n/10$. 
        Moreover, if $E = \{x,y,z\} \in \mathcal{B}^{\mathrm{bi}}$ and $(x,y,z)\in V_{i-1} \times V_i \times V_i$ for some $i \in [3]$, then 
        \begin{align*}
            d_{\mathcal{M}}(yz) \ge \frac{n}{10}
            \quad\text{or}\quad 
            \max\big\{ d_{\mathcal{M}^{\mathrm{tri}}}(xy),~d_{\mathcal{M}^{\mathrm{tri}}}(xz) \big\} \ge \frac{n}{10}. 
        \end{align*}
    \end{claim}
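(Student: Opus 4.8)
The plan is to use only the $K_4^3$-freeness of $\mathcal{H}$ and the explicit link structure of $\mathbb{C}[V_1,V_2,V_3]$; neither the $\ell_2$-extremality nor the degree bounds of Claim~\ref{CLAIM:max-deg-B} will be needed. The observation I would start from is that if $E=\{x,y,z\}\in\mathcal{H}$, then $\mathcal{H}$ being $K_4^3$-free forces
\[
    N_{\mathcal{H}}(xy)\cap N_{\mathcal{H}}(xz)\cap N_{\mathcal{H}}(yz)=\emptyset,
\]
since any common neighbour $w$ (necessarily outside $\{x,y,z\}$) would give a copy of $K_4^3$ on $\{x,y,z,w\}$. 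On the other hand, because $\mathcal{M}=\mathbb{C}[V_1,V_2,V_3]\setminus\mathcal{H}$, every pair $e$ satisfies $N_{\mathbb{C}[V_1,V_2,V_3]}(e)\subseteq N_{\mathcal{H}}(e)\cup N_{\mathcal{M}}(e)$. Combining the two, each $w$ in the common neighbourhood $N_{\mathbb{C}}(xy)\cap N_{\mathbb{C}}(xz)\cap N_{\mathbb{C}}(yz)$ must lie in $N_{\mathcal{M}}(e)$ for at least one pair $e\subseteq E$, so
\[
    \bigl|N_{\mathbb{C}}(xy)\cap N_{\mathbb{C}}(xz)\cap N_{\mathbb{C}}(yz)\bigr|\le d_{\mathcal{M}}(xy)+d_{\mathcal{M}}(xz)+d_{\mathcal{M}}(yz).
\]

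Next I would evaluate the left-hand side for each kind of bad edge, reading off the neighbourhoods from the four permitted triple types of $\mathbb{C}[V_1,V_2,V_3]$. If $E\in\mathcal{B}^{\mathrm{int}}$, say $E\subseteq V_1$, then each pair $e\subseteq E$ has $N_{\mathbb{C}}(e)=V_2$ (the only permitted type with two vertices in $V_1$ is $V_1V_1V_2$), so the common $\mathbb{C}$-neighbourhood is exactly $V_2$, of size at least $(1/3-10\delta_{1}^{1/2})n$ by Claim~\ref{CLAIM:K43-L2-xi-size}. Since $\delta_{1}$ is small this exceeds $3n/10$, so one of the three pairs has $\mathcal{M}$-codegree at least $n/10$. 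If $E\in\mathcal{B}^{\mathrm{bi}}$, then by cyclic symmetry I may take $(x,y,z)\in V_{i-1}\times V_i\times V_i$ with $i=2$, i.e.\ $x\in V_1$ and $y,z\in V_2$; here $N_{\mathbb{C}}(yz)=V_3$ and $N_{\mathbb{C}}(xy),N_{\mathbb{C}}(xz)\supseteq V_3$, so the common $\mathbb{C}$-neighbourhood contains $V_3$. The same counting yields $d_{\mathcal{M}}(xy)+d_{\mathcal{M}}(xz)+d_{\mathcal{M}}(yz)>3n/10$, which already gives the first assertion; and since any missing edge through $xy$ or $xz$ that meets $V_3$ has type $V_1V_2V_3$ and hence lies in $\mathcal{M}^{\mathrm{tri}}$, one in fact gets $d_{\mathcal{M}^{\mathrm{tri}}}(xy)+d_{\mathcal{M}^{\mathrm{tri}}}(xz)+d_{\mathcal{M}}(yz)>3n/10$, so one of these three terms is at least $n/10$, which is exactly the refined statement.

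I do not anticipate a genuine obstacle here: the argument is bookkeeping once the displayed inequality is in place, and the threshold $n/10$ sits comfortably below $(1/3-o(1))n/3$, leaving ample slack. The only points requiring care are (i) checking the exact $\mathbb{C}$-neighbourhoods of the pairs of each type of bad edge against the definition of $\mathbb{C}[V_1,V_2,V_3]$, and (ii) in the $\mathcal{B}^{\mathrm{bi}}$ case, tracking that the relevant missing edges through the cross pairs $xy,xz$ are of ``tri'' type, so that the conclusion is stated with $\mathcal{M}^{\mathrm{tri}}$ rather than $\mathcal{M}$ as required.
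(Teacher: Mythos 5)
Your proposal is correct and is essentially the paper's argument in different clothing: saying that every $w$ in the common $\mathbb{C}[V_1,V_2,V_3]$-neighbourhood of the three pairs of $E$ must lie in $N_{\mathcal{M}}(e)$ for some pair $e\subseteq E$ is exactly the paper's observation that for every $w$ in the relevant part $\{wxy,wxz,wyz\}\cap\mathcal{M}\neq\emptyset$, followed by the same summation over $w$, the same averaging over the three pairs, and the same use of Claim~\ref{CLAIM:K43-L2-xi-size} for the part sizes (the paper's proof nominally cites Claim~\ref{CLAIM:max-deg-B} there, but in substance it only uses the part-size bound, just as you do). Your tracking that the missing triples through $xy$ and $xz$ in the $\mathcal{B}^{\mathrm{bi}}$ case are of type $V_1V_2V_3$, hence in $\mathcal{M}^{\mathrm{tri}}$, matches the paper's handling of the refined statement.
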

    \begin{proof}[Proof of Claim~\ref{CLAIM:K43-bad-contains-heavy}]
        Fix a bad edge $E = \{x,y,z\} \in \mathcal{B}$. 
        First, we consider the case that $E \in \mathcal{B}^{\mathrm{int}}$. 
        By symmetry, we may assume that $E \subseteq V_1$.
        Note that for every $w \in V_2$, it follows from the $K_{4}^{3}$-freeness of $\mathcal{H}$ that $\{wxy, wyz, wxz\} \cap \mathcal{M} \neq \emptyset$. 
        Therefore, 
        \begin{align*}
            d_{\mathcal{M}}(xy) + d_{\mathcal{M}}(yz) + d_{\mathcal{M}}(xz) 
            \ge |V_2|. 
        \end{align*}
        Combining it with Claim~\ref{CLAIM:max-deg-B}, we obtain 
        \begin{align*}
            \max\{d_{\mathcal{M}}(xy),~d_{\mathcal{M}}(yz),~d_{\mathcal{M}}(xz)\}
            \ge \frac{|V_2|}{3}
            \ge \frac{1}{3}\left(\frac{n}{3} - 10 \delta_{1}^{1/2} n\right)
            > \frac{n}{10}. 
        \end{align*}

        Next, we consider the case that $E \in \mathcal{B}^{\mathrm{bi}}$. 
        By symmetry, we may assume that $(x,y,z) \in V_{1} \times V_2 \times V_2$. 
        Note that for every $w \in V_3$, it follows from the $K_{4}^{3}$-freeness of $\mathcal{H}$ that $\{wxy, wyz, wxz\} \cap \mathcal{M} \neq \emptyset$. 
        Therefore, 
        \begin{align*}
            d_{\mathcal{M}^{\mathrm{tri}}}(xy)  + d_{\mathcal{M}^{\mathrm{tri}}}(xz) + d_{\mathcal{M}}(yz)
            \ge |V_3|. 
        \end{align*}
        Combining it with Claim~\ref{CLAIM:max-deg-B}, we obtain 
        \begin{align*}
            \max\{d_{\mathcal{M}^{\mathrm{tri}}}(xy),~d_{\mathcal{M}^{\mathrm{tri}}}(xz),~d_{\mathcal{M}}(yz)\}
            \ge \frac{|V_3|}{3}
            \ge \frac{1}{3}\left(\frac{n}{3} - 10 \delta_{1}^{1/2} n\right)
            > \frac{n}{10}. 
        \end{align*}
        This completes the proof of Claim~\ref{CLAIM:K43-bad-contains-heavy}. 
    \end{proof}%CLAIM

    Recall that $\overline{K}[V_1, V_2, V_3] = \binom{V_1}{2} \cup \binom{V_{2}}{2} \cup \binom{V_{3}}{2}$. 
    \begin{claim}\label{CLAIM:K43-B2-codegree-small}
        For every $e \in \overline{K}[V_1, V_2, V_3]$, we have $d_{\mathcal{B}^{\mathrm{bi}}}(e) \le 3 \delta_{3}^{1/2} n$ and, in particular, 
        \begin{align*}
            d_{\mathcal{B}}(e) 
            \le \max_{i\in [3]}\{|V_i|\} + 3 \delta_{3}^{1/2} n
            \le \frac{n}{3} + 4 \delta_{3}^{1/2} n. 
        \end{align*}
    \end{claim}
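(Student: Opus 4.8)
The plan is to combine the $K_4^3$-freeness of $\mathcal{H}$ with the maximum-degree bound on $\mathcal{M}$ from Claim~\ref{CLAIM:max-deg-B}. By the cyclic symmetry among $V_1,V_2,V_3$ we may assume $e=\{a,b\}\subseteq V_1$. First I would locate $N_{\mathcal{B}^{\mathrm{bi}}}(e)$: if $E=e\cup\{w\}\in\mathcal{B}^{\mathrm{bi}}$ then $w\notin V_1$ (otherwise $E\subseteq V_1$, so $E\in\mathcal{B}^{\mathrm{int}}$), and $w\notin V_2$ (otherwise $E$ has type $V_1V_1V_2$, hence lies in $\mathbb{C}[V_1,V_2,V_3]$ and is not bad). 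Thus $W\coloneqq N_{\mathcal{B}^{\mathrm{bi}}}(e)\subseteq V_3$; set $t\coloneqq|W|$, and observe that $\{a,b,w\}\in\mathcal{H}$ for every $w\in W$.

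Next I would apply $K_4^3$-freeness to each $4$-set $\{a,b,w,w'\}$ with $w\neq w'$ in $W$: since $\{a,b,w\}$ and $\{a,b,w'\}$ already belong to $\mathcal{H}$, at least one of $\{a,w,w'\}$, $\{b,w,w'\}$ is absent from $\mathcal{H}$, and since $a,b\in V_1$, $w,w'\in V_3$, both of these triples have type $V_3V_3V_1$ and hence lie in $\mathbb{C}[V_1,V_2,V_3]$, so whichever is absent belongs to $\mathcal{M}$. Colour the pair $\{w,w'\}$ by $a$ when $\{a,w,w'\}\in\mathcal{M}$ and by $b$ otherwise; pigeonhole gives a vertex, say $a$, receiving at least $\tfrac12\binom{t}{2}$ of the $\binom{t}{2}$ pairs, and the corresponding triples are distinct edges of $\mathcal{M}$ all containing $a$, so
\begin{align*}
    \frac{1}{2}\binom{t}{2}\ \le\ d_{\mathcal{M}}(a)\ \le\ \Delta(\mathcal{M})\ \le\ 2\delta_3 n^2
\end{align*}
by Claim~\ref{CLAIM:max-deg-B}. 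This gives $t(t-1)\le 8\delta_3 n^2$, hence $t\le 1+2\sqrt{2}\,\delta_3^{1/2}n\le 3\delta_3^{1/2}n$ for $n$ large, which is the asserted bound $d_{\mathcal{B}^{\mathrm{bi}}}(e)\le 3\delta_3^{1/2}n$.

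For the ``in particular'' statement, note that a bad edge $\{a,b,c\}\in\mathcal{B}$ with $c\in V_2$ would have type $V_1V_1V_2\in\mathbb{C}[V_1,V_2,V_3]$ and so could not be bad; hence every bad edge through $e$ has third vertex in $V_1$ or $V_3$, and $d_{\mathcal{B}}(e)=d_{\mathcal{B}^{\mathrm{int}}}(e)+d_{\mathcal{B}^{\mathrm{bi}}}(e)\le(|V_1|-2)+3\delta_3^{1/2}n\le\max_{i\in[3]}|V_i|+3\delta_3^{1/2}n$, which by Claim~\ref{CLAIM:K43-L2-xi-size} and $\delta_1\ll\delta_3$ becomes $d_{\mathcal{B}}(e)\le n/3+10\delta_1^{1/2}n+3\delta_3^{1/2}n\le n/3+4\delta_3^{1/2}n$. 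The only delicate point is the type bookkeeping: one must check that the two ``cross'' triples forced absent by $K_4^3$-freeness really lie in $\mathbb{C}[V_1,V_2,V_3]$ (so that they are counted by $\Delta(\mathcal{M})$), and that a bad edge through $e$ cannot have its third vertex in $V_2$; once those are in place the estimate is a single averaging step, so I do not expect any genuine obstacle here.
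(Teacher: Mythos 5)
Your proposal is correct and follows essentially the same route as the paper: both identify $N_{\mathcal{B}^{\mathrm{bi}}}(e)$ as a subset of the "third" part, apply $K_4^3$-freeness to each pair $\{w,w'\}$ there to force a missing triple of type $V_3V_3V_1$ through $u_1$ or $u_2$, and then bound $\tfrac12\binom{t}{2}$ by $\Delta(\mathcal{M})\le 2\delta_3 n^2$ from Claim~\ref{CLAIM:max-deg-B}. The only differences are cosmetic (a direct bound with pigeonhole instead of the paper's contradiction/averaging, and an explicit write-out of the ``in particular'' computation that the paper leaves implicit).
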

    \begin{proof}[Proof of Claim~\ref{CLAIM:K43-B2-codegree-small}]
        Suppose to the contrary that there exists $e = \{u_1, u_2\} \in \overline{K}[V_1, V_2, V_3]$ such that $d_{\mathcal{B}^{\mathrm{bi}}}(e) > 3 \delta_{3}^{1/2} n$. 
        By symmetry, we may assume that $e \subseteq V_1$. 
        Let $N \coloneqq N_{\mathcal{H}}(e) \cap V_3$. 
        It follows from the assumption that $|N| = d_{\mathcal{B}^{\mathrm{bi}}}(e) > 3 \delta_{3}^{1/2} n$. 
        For every pair $\{y,z\} \subseteq N$, it follows from the $K_{4}^{3}$-freeness of $\mathcal{H}$ that $\{u_1yz, u_2yz\} \cap \mathcal{M} \neq \emptyset$.
        Therefore,  
        \begin{align*}
            \max\{d_{\mathcal{M}}(u_1),~d_{\mathcal{M}}(u_2)\}
            \ge \frac{1}{2} \left(d_{\mathcal{M}}(u_1) + d_{\mathcal{M}}(u_2) \right)
            \ge \frac{1}{2}\binom{|N|}{2}
            > 2\delta_{3} n^2, 
        \end{align*}
        a contradiction to Claim~\ref{CLAIM:max-deg-B}. 
    \end{proof}%CLAIM

    \begin{claim}\label{CLAIM:K43-Mcodeg-Bcode-Kbar}
        For every $e \in \overline{K}[V_1, V_2, V_3]$, we have $d_{\mathcal{M}}(e) \ge d_{\mathcal{B}}(e) - 7 \delta_{3}^{1/2} n$ . 
    \end{claim}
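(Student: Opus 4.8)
The plan is to turn the claimed inequality into a clean codegree bound and then extract that bound from $K_{4}^{3}$-freeness by a one-line double count. By symmetry write $e=\{u_1,u_2\}\subseteq V_i$. The only triples of $\mathbb{C}[V_1,V_2,V_3]$ containing $e$ are $e\cup\{w\}$ with $w\in V_{i+1}$, so $d_{\mathcal{M}}(e)=|V_{i+1}|-|N_{\mathcal{H}}(e)\cap V_{i+1}|$, while $d_{\mathcal{B}}(e)=d_{\mathcal{H}}(e)-|\mathcal{H}(e)\cap\mathbb{C}(e)|=d_{\mathcal{H}}(e)-|N_{\mathcal{H}}(e)\cap V_{i+1}|$. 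Subtracting, $d_{\mathcal{M}}(e)-d_{\mathcal{B}}(e)=|V_{i+1}|-d_{\mathcal{H}}(e)$, so the claim is equivalent to $d_{\mathcal{H}}(e)\le |V_{i+1}|+7\delta_{3}^{1/2}n$.

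To prove this, set $A\coloneqq N_{\mathcal{H}}(e)\cap V_i$ and $B\coloneqq N_{\mathcal{H}}(e)\cap V_{i+1}$; a third vertex in $V_i$ or $V_{i+2}$ turns $e$ into a bad edge (of type $\mathcal{B}^{\mathrm{int}}$, resp. $\mathcal{B}^{\mathrm{bi}}$), so $d_{\mathcal{H}}(e)=|A|+|B|+d_{\mathcal{B}^{\mathrm{bi}}}(e)$, and $d_{\mathcal{B}^{\mathrm{bi}}}(e)\le 3\delta_{3}^{1/2}n$ by Claim~\ref{CLAIM:K43-B2-codegree-small}. The key estimate is $|A|\cdot|B|\le 4\delta_{3}n^{2}$: for every $(w_1,w_2)\in A\times B$ the four vertices $u_1,u_2,w_1,w_2$ are distinct and $\{u_1,u_2,w_1\},\{u_1,u_2,w_2\}\in\mathcal{H}$, so by $K_{4}^{3}$-freeness one of $\{u_1,w_1,w_2\}$, $\{u_2,w_1,w_2\}$ is not in $\mathcal{H}$; each such triple has two vertices in $V_i$ and one in $V_{i+1}$, hence lies in $\mathbb{C}[V_1,V_2,V_3]$ and is therefore a missing edge. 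For fixed $j\in\{1,2\}$ the missing edges $\{u_j,w_1,w_2\}$ obtained this way are pairwise distinct and all contain $u_j$, so summing over $w_1\in A$ gives $|A|\cdot|B|\le d_{\mathcal{M}}(u_1)+d_{\mathcal{M}}(u_2)\le 2\Delta(\mathcal{M})\le 4\delta_{3}n^{2}$ by Claim~\ref{CLAIM:max-deg-B}.

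It remains to finish the bookkeeping. If $|A|\le 2\delta_{3}^{1/2}n$ then $d_{\mathcal{H}}(e)\le 2\delta_{3}^{1/2}n+|V_{i+1}|+3\delta_{3}^{1/2}n\le |V_{i+1}|+7\delta_{3}^{1/2}n$. Otherwise $|A|>2\delta_{3}^{1/2}n$, so $|B|<4\delta_{3}n^{2}/|A|<2\delta_{3}^{1/2}n$; combining with $|A|\le |V_i|-2\le |V_{i+1}|+20\delta_{1}^{1/2}n$ (Claim~\ref{CLAIM:K43-L2-xi-size}) and the hierarchy $\delta_{1}\ll\delta_{3}$, we again get $d_{\mathcal{H}}(e)\le |A|+2\delta_{3}^{1/2}n+3\delta_{3}^{1/2}n\le |V_{i+1}|+7\delta_{3}^{1/2}n$. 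I do not expect a genuine obstacle here: the conceptual step is the reduction $d_{\mathcal{M}}(e)-d_{\mathcal{B}}(e)=|V_{i+1}|-d_{\mathcal{H}}(e)$ together with the product bound $|A|\cdot|B|\le 4\delta_3 n^2$ from $K_{4}^{3}$-freeness, and the only thing requiring care is keeping the error terms ($\delta_1^{1/2}$ from the part sizes versus $\delta_3^{1/2}$, and $\delta_3$ versus $\delta_3^{1/2}$) consistent with the fixed hierarchy $\delta\ll\delta_1\ll\delta_2\ll\delta_3$.
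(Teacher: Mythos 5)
Your proof is correct and uses essentially the same argument as the paper: the heart of both is the observation that $K_4^3$-freeness applied to the quadruples $\{u_1,u_2,w_1,w_2\}$ with $(w_1,w_2)\in \big(N_{\mathcal H}(e)\cap V_i\big)\times\big(N_{\mathcal H}(e)\cap V_{i+1}\big)$ forces $|A||B|\le d_{\mathcal M}(u_1)+d_{\mathcal M}(u_2)\le 2\Delta(\mathcal M)\le 4\delta_3 n^2$, combined with Claims~\ref{CLAIM:max-deg-B}, \ref{CLAIM:K43-B2-codegree-small}, and~\ref{CLAIM:K43-L2-xi-size}. Your identity $d_{\mathcal M}(e)-d_{\mathcal B}(e)=|V_{i+1}|-d_{\mathcal H}(e)$ merely repackages the paper's contradiction-plus-case-split (on whether $\min\{|N_1|,|N_2|\}$ is small) as a direct codegree bound, and the error-term bookkeeping is consistent with the hierarchy $\delta_1\ll\delta_3$.
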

    \begin{proof}[Proof of Claim~\ref{CLAIM:K43-Mcodeg-Bcode-Kbar}]
        Suppose to the contrary that there exists $e = \{u_1, u_2\} \in \overline{K}[V_1, V_2, V_3]$ such that this claim fails. 
        By symmetry, we may assume that $e \subseteq V_1$. 
        
        Let $N_{1} \coloneqq N_{\mathcal{H}}(e) \cap V_1$ and $N_{2} \coloneqq N_{\mathcal{H}}(e) \cap V_2$. 
        We may assume that $\min\{|N_1|,~|N_2|\} > 2 \delta_{3}^{1/2} n$. 
        Indeed, if $|N_1| \le 2 \delta_{3}^{1/2} n$, then it follows from Claim~\ref{CLAIM:K43-B2-codegree-small} that 
        \begin{align*}
            d_{\mathcal{B}}(e) 
            = d_{\mathcal{B}^{\mathrm{int}}}(e) + d_{\mathcal{B}^{\mathrm{bi}}}(e)
            = |N_1| + d_{\mathcal{B}^{\mathrm{bi}}}(e)
            \le 5\delta_{3}^{1/2} n, 
        \end{align*}
        which is clearly at most $d_{\mathcal{M}}(e) + 7 \delta_{3}^{1/2} n$. 
        
        Similarly, if $|N_2| \le 2 \delta_{3}^{1/2} n$, then it follows from Claim~\ref{CLAIM:max-deg-B} and Claim~\ref{CLAIM:K43-B2-codegree-small} that 
        \begin{align*}
            d_{\mathcal{M}}(e) 
            = |V_2| - |N_2|
            \ge \frac{n}{3} - 10 \delta_{1}^{1/2} n - 2 \delta_{3}^{1/2} n
            \ge \frac{n}{3} - 3 \delta_{3}^{1/2} n
            \ge d_{\mathcal{B}}(e) - 7 \delta_{3}^{1/2} n. 
        \end{align*}
        Therefore, we may assume that $\min\{|N_1|,~|N_2|\} > 2 \delta_{3}^{1/2} n$.

        Fix an arbitrary pair $(y,z) \in N_1 \times N_2$. It follows from the $K_{4}^{3}$-freeness of $\mathcal{H}$ that $\{u_1yz, u_2yz\} \cap \mathcal{M} \neq \emptyset$. 
        Therefore, 
        \begin{align*}
            \max\{d_{\mathcal{M}}(u_1),~d_{\mathcal{M}}(u_2)\}
            & \ge \frac{1}{2} \left(d_{\mathcal{M}}(u_1) + d_{\mathcal{M}}(u_2) \right) \\
            & \ge \frac{1}{2}|N_1||N_2|
            > \frac{1}{2} \cdot 2 \delta_{3}^{1/2} n \cdot 2 \delta_{3}^{1/2} n
            = 2\delta_{3} n^2, 
        \end{align*}
        a contradiction to Claim~\ref{CLAIM:max-deg-B}. 
    \end{proof}%CLAIM

    Define a subfamily of $\mathcal{B}^{\mathrm{bi}}$ as follows: 
    \begin{align*}
        \tilde{\mathcal{B}}
        \coloneqq \big\{\{u, v, w\}\in \mathcal{B}^{\mathrm{bi}} \colon \text{$(u,v,w) \in V_{i}\times V_{i} \times V_{i-1}$ for some $i \in [3]$ and $d_{\mathcal{M}}(uv) \le \delta_{4} n$} \big\}. 
    \end{align*}

    \begin{claim}\label{CLAIM:K43-Mcodeg-Bcode-K}
        For every $e \in K[V_1, V_2, V_3]$, we have $d_{\mathcal{M}^{\mathrm{tri}}}(e) \ge d_{\tilde{\mathcal{B}}}(e) - 3\delta_{4} n$. 
    \end{claim}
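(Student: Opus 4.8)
The plan is to argue by contradiction, using the $K_4^3$-freeness of $\mathcal{H}$ on a well-chosen $4$-set. By the cyclic symmetry of $\mathbb{C}[V_1,V_2,V_3]$ (hence of $\tilde{\mathcal{B}}$, $\mathcal{M}^{\mathrm{tri}}$, and of conditions like Claims~\ref{CLAIM:K43-L2-xi-size} and~\ref{CLAIM:max-deg-B}), I may assume $e = \{u_1,u_2\}$ with $u_1 \in V_1$ and $u_2 \in V_2$. The first step is pure edge-type bookkeeping: any triple of $\tilde{\mathcal{B}} \subseteq \mathcal{B}^{\mathrm{bi}}$ containing $e$ must be obtained by adding a vertex $w \in V_2$ (adding a vertex of $V_1$ or $V_3$ produces a triple whose type lies in $\mathbb{C}[V_1,V_2,V_3]$, hence cannot be bad), and for such a triple the ``$V_iV_i$''-pair is $\{u_2,w\}$; thus, writing $A \coloneqq \{w \in V_2 : \{u_1,u_2,w\} \in \mathcal{H} \text{ and } d_{\mathcal{M}}(u_2 w) \le \delta_4 n\}$, we get $d_{\tilde{\mathcal{B}}}(e) = |A|$. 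Similarly, every triple of type $V_1V_2V_3$ lies in $\mathbb{C}[V_1,V_2,V_3]$, so $d_{\mathcal{M}^{\mathrm{tri}}}(e) = |V_3| - |N_3|$, where $N_3 \coloneqq N_{\mathcal{H}}(e) \cap V_3$.

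Next, assuming for contradiction that $d_{\mathcal{M}^{\mathrm{tri}}}(e) < |A| - 3\delta_4 n$, I would extract a large $\mathcal{M}^{\mathrm{tri}}$-structure at the pairs $\{u_1,w\}$ for $w \in A$. Fix $w \in A$. For each $w' \in N_3$, the $4$-set $\{u_1,u_2,w,w'\}$ is not a copy of $K_4^3$, while $\{u_1,u_2,w\} \in \mathcal{H}$ (since $w \in A$) and $\{u_1,u_2,w'\} \in \mathcal{H}$ (since $w' \in N_3$); hence $\{u_1,w,w'\} \notin \mathcal{H}$ or $\{u_2,w,w'\} \notin \mathcal{H}$. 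The triple $\{u_2,w,w'\}$ has type $V_2V_2V_3 \in \mathbb{C}[V_1,V_2,V_3]$, so it fails to lie in $\mathcal{H}$ for at most $d_{\mathcal{M}}(u_2 w) \le \delta_4 n$ choices of $w'$; consequently $\{u_1,w,w'\}$ — a triple of type $V_1V_2V_3$, hence a member of $\mathcal{M}^{\mathrm{tri}}$ whenever absent from $\mathcal{H}$ — lies in $\mathcal{M}^{\mathrm{tri}}$ for at least $|N_3| - \delta_4 n$ choices of $w' \in N_3$, giving $d_{\mathcal{M}^{\mathrm{tri}}}(u_1 w) \ge |N_3| - \delta_4 n$ for every $w \in A$. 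Summing over $w \in A$ and using that the $\mathcal{M}^{\mathrm{tri}}$-edges at $u_1$ have type $V_1V_2V_3$ (so $\sum_{w \in V_2} d_{\mathcal{M}^{\mathrm{tri}}}(u_1 w) = d_{\mathcal{M}^{\mathrm{tri}}}(u_1)$), Claim~\ref{CLAIM:max-deg-B} yields $|A|\,(|N_3| - \delta_4 n) \le d_{\mathcal{M}^{\mathrm{tri}}}(u_1) \le \Delta(\mathcal{M}) \le 2\delta_3 n^2$.

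Finally I would close with a short case split on $|N_3|$. If $|N_3| \ge 2\delta_4 n$, the last inequality forces $|A| \le (2\delta_3/\delta_4)\, n < 3\delta_4 n$ (as $\delta_3 \ll \delta_4$), contradicting $|A| > d_{\mathcal{M}^{\mathrm{tri}}}(e) + 3\delta_4 n \ge 3\delta_4 n$. If instead $|N_3| < 2\delta_4 n$, then $|A| > d_{\mathcal{M}^{\mathrm{tri}}}(e) + 3\delta_4 n = |V_3| - |N_3| + 3\delta_4 n > |V_3| + \delta_4 n$, contradicting $|A| \le |V_2| \le |V_3| + 20\delta_1^{1/2} n < |V_3| + \delta_4 n$ (using Claim~\ref{CLAIM:K43-L2-xi-size} and $\delta_1 \ll \delta_4$). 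I expect the only real (and modest) obstacle to be exactly this second case: when $N_3$ is small the degree-sum estimate degenerates, and one must instead fall back on the crude bound $d_{\tilde{\mathcal{B}}}(e) = |A| \le |V_2|$ together with the near-balance of the parts $V_i$; everything else is routine once the edge-type bookkeeping of $\mathbb{C}[V_1,V_2,V_3]$ is in place.
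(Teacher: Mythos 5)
Your proposal is correct and takes essentially the same route as the paper: both proofs apply $K_4^3$-freeness to the quadruples $\{u_1,u_2,w,w'\}$ with $w$ a $\tilde{\mathcal{B}}$-neighbour of $e$ in $V_2$ (so that $d_{\mathcal{M}}(u_2w)\le \delta_4 n$) and $w'\in N_{\mathcal{H}}(e)\cap V_3$, thereby forcing on the order of $d_{\tilde{\mathcal{B}}}(e)\cdot|N_3|$ triples of $\mathcal{M}^{\mathrm{tri}}$ through $u_1$ and contradicting $\Delta(\mathcal{M})\le 2\delta_3 n^2$ from Claim~\ref{CLAIM:max-deg-B}, while the degenerate cases (small $|N_3|$, or small bi-neighbourhood) are dispatched by the same size estimates from Claim~\ref{CLAIM:K43-L2-xi-size}. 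Your only deviation is cosmetic (summing $d_{\mathcal{M}^{\mathrm{tri}}}(u_1w)$ over $w\in A$ instead of double counting directly, and restricting from the start to the $\tilde{\mathcal{B}}$-neighbourhood $A$, which if anything is slightly more careful than the paper's use of $N_{\mathcal{H}}(e)\cap V_2$).
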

    \begin{proof}[Proof of Claim~\ref{CLAIM:K43-Mcodeg-Bcode-K}]
        Suppose to the contrary that there exists $e = \{u_1, u_2\} \in K[V_1, V_2, V_3]$ such that this claim fails. 
        By symmetry, we may assume that $(u_1, u_2) \in V_1 \times V_2$. 
        
        Let $N_{2} \coloneqq N_{\mathcal{H}}(e) \cap V_{2}$ and $N_{3} \coloneqq N_{\mathcal{H}}(e) \cap V_{3}$. 
        We may assume that $\min\{|N_2|,~|N_3|\} > 2\delta_{4} n$. 
        Indeed, if $|N_2| \le 2\delta_{4} n$, then 
        \begin{align*}
            d_{\tilde{\mathcal{B}}}(e)
            \le d_{\mathcal{B}^{\mathrm{bi}}}(e)
            = |N_2|
            \le 2\delta_{4} n
            \le d_{\mathcal{M}^{\mathrm{tri}}}(e) + 3\delta_{4} n. 
        \end{align*}
        If $|N_3| \le 2\delta_{4} n$, then it follows from Claim~\ref{CLAIM:max-deg-B} that  
        \begin{align*}
            d_{\mathcal{M}^{\mathrm{tri}}}(e)
            = |V_3| - |N_3|
            & \ge \frac{n}{3} - 10\delta_{1}^{1/2} n - 2\delta_{4} n \\
            & \ge \frac{n}{3} + 10\delta_{1}^{1/2} n - 3\delta_{4} n
            \ge |V_2| - 3 \delta_{4} n
            \ge d_{\tilde{\mathcal{B}}}(e) - 3\delta_{4} n. 
        \end{align*}
        Therefore, we may assume that $\min\{|N_2|,~|N_3|\} > 2\delta_{4} n$. 
      
        Fix a vertex $y \in N_{2}$. 
        By the definition of $\tilde{\mathcal{B}}$, we have $d_{\mathcal{M}}(u_2y) \le \delta_{4} < |N_3|/2$. Therefore, 
        \begin{align*}
            |N_{3} \cap N_{\mathcal{H}}(u_2 y)|
            \ge |N_{3}| - d_{\mathcal{M}}(u_2y) 
            > \frac{|N_{3}|}{2}. 
        \end{align*}
        For every vertex $z\in N_{3} \cap N_{\mathcal{H}}(u_2 y)$, it follows from $K_{4}^{3}$-freeness of $\mathcal{H}$ that $u_1yz \in \mathcal{M}^{\mathrm{tri}}$. 
        Consequently, 
        \begin{align*}
            d_{\mathcal{M}}(u_1)
            \ge d_{\mathcal{M}^{\mathrm{tri}}}(u_1)
            \ge \frac{1}{2} |N_{2}||N_{3}|
            > \frac{1}{2} \cdot 2 \delta_{4} n \cdot 2 \delta_{4}n
            > 2\delta_{3} n, 
        \end{align*}
        a contradiction to Claim~\ref{CLAIM:max-deg-B}. 
    \end{proof}%CLAIM

    We now describe a two-phase procedure that transforms $\mathcal{H}$ into a subgraph of $\mathbb{C}[V_1, V_2, V_3]$.
    In Phase One, we remove all bad edges in $\mathcal{B}^{\mathrm{int}}$ and $\mathcal{B}^{\mathrm{bi}} \setminus \tilde{\mathcal{B}}$, while ensuring that the $\ell_{2}$-norm increases (using Lemma~\ref{LEMMA:K43-L2-improvement-phase-one}).
    In Phase Two, we remove all bad edges in $\tilde{\mathcal{B}} \subseteq \mathcal{B}^{\mathrm{bi}}$, again ensuring that the $\ell_{2}$-norm increases (using Lemma~\ref{LEMMA:K43-L2-improvement-phase-two}).
    This yields a contradiction, since it implies that $\norm{\mathcal{H}}_{2} < \norm{\mathbb{C}[V_1, V_2, V_3]}_{2}$, contradicting the maximality of $\mathcal{H}$.

    % A pair $e \in \binom{V}{2}$ is called \emph{heavy} (in $\mathcal{M}$) if $d_{\mathcal{M}} \ge \frac{n}{10}$. 
    % Let $\mathcal{I}$ denote the set of of all heavy pairs in $\overline{K}[V_1, V_2, V_3]$, and let $\mathcal{J}$ denote the set of of all heavy pairs in $K[V_1, V_2, V_3]$, 
    % that is 
    Define 
    \begin{align*}
        \mathcal{I}
        & \coloneqq \left\{e\in \overline{K}[V_1, V_2, V_3] \colon d_{\mathcal{M}}(e) \ge \delta_{4} n \right\},
        \quad\text{and}\quad \\[0.3em]
        \mathcal{J}
        & \coloneqq \left\{e\in K[V_1, V_2, V_3] \colon d_{\mathcal{M}^{\mathrm{tri}}}(e) \ge n/10 \right\}. 
    \end{align*}
    We label the elements of $\mathcal{I}$ and $\mathcal{J}$ as
    \begin{align*}
        \mathcal{I}
        = \{e_1, \ldots, e_{\ell}\}
        \quad\text{and}\quad 
        \mathcal{J}
        = \{\tilde{e}_1, \ldots, \tilde{e}_{m}\}. 
    \end{align*}

    \medskip 
    
    \textbf{Phase One}: 
    Let $\mathcal{H}_{0} \coloneqq \mathcal{H}$. 
    Suppose that $\mathcal{H}_{i}$ has been defined for some $i \in [0,\ell-1]$.
    We then let 
    \begin{align*}
        \mathcal{H}_{i+1}
        \coloneqq \big( \mathcal{H}_{i}  \setminus \mathcal{B}(e_{i+1}) \big) \cup \mathcal{M}(e_{i+1}). 
    \end{align*}
    That is, we remove all bad edges containing $e_{i+1}$ and insert all missing edges containing $e_{i+1}$ (see the figure on the left in Figure~\ref{fig:local-adjust}).

    For $i \in [0,\ell]$, let 
    \begin{align*}
        \mathcal{M}_{i}
        \coloneqq \mathbb{C}[V_1, V_2, V_3] \setminus \mathcal{H}_{i}
        \quad\text{and}\quad 
        \mathcal{B}_{i}
        \coloneqq \mathcal{H}_{i} \setminus \mathbb{C}[V_1, V_2, V_3]. 
    \end{align*}
    It follows from the definition of each $\mathcal{H}_{i}$ that 
    \begin{align*}
        \mathcal{B} = \mathcal{B}_{0} \supseteq \mathcal{B}_{1} \supseteq \cdots \supseteq \mathcal{B}_{\ell}
        \quad\text{and}\quad 
        \mathcal{M} = \mathcal{M}_{0} \supseteq \mathcal{M}_{1} \supseteq \cdots \supseteq \mathcal{M}_{\ell}. 
    \end{align*}

    By Claim~\ref{CLAIM:K43-bad-contains-heavy} and the definition of $\tilde{\mathcal{B}}$, every bad edge in $\mathcal{B}^{\mathrm{int}}\cup \big(\mathcal{B}^{\mathrm{bi}} \setminus \tilde{\mathcal{B}} \big)$ contains some element of $\mathcal{I}$. 
    Therefore, the final $3$-graph $\mathcal{H}_{\ell}$ contains no bad edge from $\mathcal{B}^{\mathrm{int}}\cup \big(\mathcal{B}^{\mathrm{bi}} \setminus \tilde{\mathcal{B}} \big)$.  

    \begin{claim}\label{CLAIM:K43-Phase-one-2norm-increase}
        We have $\norm{\mathcal{H}_{i+1}}_{2} > \norm{\mathcal{H}_{i}}_{2}$ for $i \in [0, \ell-1]$. 
    \end{claim}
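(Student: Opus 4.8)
The plan is to obtain the claim as an immediate consequence of Lemma~\ref{LEMMA:K43-L2-improvement-phase-one}, applied to the $3$-graph $\mathcal{G}\coloneqq\mathcal{H}_{i}$ with the partition $V_{1}\cup V_{2}\cup V_{3}$ and the shadow pair $e_{\ast}\coloneqq e_{i+1}\in\mathcal{I}\subseteq\overline{K}[V_1,V_2,V_3]$. Since by construction $\mathcal{H}_{i+1}=\big(\mathcal{H}_{i}\setminus\mathcal{B}_{i}(e_{i+1})\big)\cup\mathcal{M}_{i}(e_{i+1})$ is exactly the graph $\mathcal{G}^{\ast}$ produced by that lemma, its conclusion $\norm{\mathcal{G}^{\ast}}_{2}>\norm{\mathcal{G}}_{2}$ is precisely what we want; so the whole task reduces to verifying the five hypotheses \ref{LEMMA:K43-L2-improvement-phase-one-1}--\ref{LEMMA:K43-L2-improvement-phase-one-5} for one appropriately chosen constant $\xi$. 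Concretely I would fix $\xi$ with $0<\xi<\xi_{\ref{LEMMA:K43-L2-improvement-phase-one}}$ and $\max\{10\delta_{1}^{1/2},\,2\delta_{3},\,7\delta_{3}^{1/2}\}\le\xi\le(\delta_{4}/47)^{2}$; such a $\xi$ exists (for instance $\xi=8\delta_{3}^{1/2}$ works) precisely because $\delta_{1}\ll\delta_{3}\ll\delta_{4}\ll 1$, and $n\ge 1/\xi$ holds since $n$ is large.

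The one genuinely new ingredient, and the point that is most easily overlooked, is the identity $d_{\mathcal{M}_{i}}(e_{i+1})=d_{\mathcal{M}}(e_{i+1})$, i.e.\ none of the earlier Phase~One insertions has filled in a missing edge through $e_{i+1}$. Since $\mathcal{M}_{i}\subseteq\mathcal{M}$, ``$\le$'' is clear. For the reverse, I would argue: by cyclic symmetry assume $e_{i+1}\subseteq V_{1}$; then any triple of $\mathbb{C}[V_1,V_2,V_3]$ containing $e_{i+1}$ must have its third vertex in $V_{2}$ (it has type $(2,1,0)$, the type $(3,0,0)$ being absent from $\mathbb{C}$), so its two pairs other than $e_{i+1}$ are cross-pairs of $K[V_1,V_2,V_3]$ and hence lie in no $e_{j}\in\mathcal{I}\subseteq\overline{K}[V_1,V_2,V_3]$. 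Because each step of Phase~One inserts only triples of $\mathbb{C}[V_1,V_2,V_3]$ that contain the pair $e_{j}$ being processed, no such triple through $e_{i+1}$ is ever inserted, so $\mathcal{M}_{i}(e_{i+1})=\mathcal{M}(e_{i+1})$.

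With that identity and the monotonicity $\mathcal{M}_{i}\subseteq\mathcal{M}$, $\mathcal{B}_{i}\subseteq\mathcal{B}$, $\mathcal{B}_{i}^{\mathrm{bi}}\subseteq\mathcal{B}^{\mathrm{bi}}$ (the int/bi split depending only on the partition), the hypotheses follow from the established claims: \ref{LEMMA:K43-L2-improvement-phase-one-1} from Claim~\ref{CLAIM:K43-L2-xi-size} and $10\delta_{1}^{1/2}\le\xi$; \ref{LEMMA:K43-L2-improvement-phase-one-2} from $\Delta(\mathcal{M}_{i})\le\Delta(\mathcal{M})$, $\Delta(\mathcal{B}_{i})\le\Delta(\mathcal{B})\le 2\delta_{3}n^{2}\le\xi n^{2}$ by Claim~\ref{CLAIM:max-deg-B}; \ref{LEMMA:K43-L2-improvement-phase-one-3} from $d_{\mathcal{M}_{i}}(e_{i+1})=d_{\mathcal{M}}(e_{i+1})\ge\delta_{4}n\ge 47\xi^{1/2}n$ since $e_{i+1}\in\mathcal{I}$; \ref{LEMMA:K43-L2-improvement-phase-one-4} from $d_{\mathcal{M}_{i}}(e_{i+1})=d_{\mathcal{M}}(e_{i+1})\ge d_{\mathcal{B}}(e_{i+1})-7\delta_{3}^{1/2}n\ge d_{\mathcal{B}_{i}}(e_{i+1})-\xi n$ by Claim~\ref{CLAIM:K43-Mcodeg-Bcode-Kbar}; and \ref{LEMMA:K43-L2-improvement-phase-one-5} from $d_{\mathcal{B}_{i}^{\mathrm{bi}}}(e_{i+1})\le d_{\mathcal{B}^{\mathrm{bi}}}(e_{i+1})\le 3\delta_{3}^{1/2}n\le\xi n$ by Claim~\ref{CLAIM:K43-B2-codegree-small}. (The shadow-membership requirement $e_{\ast}\in\partial\mathcal{G}$ is harmless here and easily accommodated, since $d_{\mathcal{H}_{i}}(e_{i+1})\ge d_{\mathbb{C}[V_1,V_2,V_3]}(e_{i+1})-d_{\mathcal{M}_{i}}(e_{i+1})=|V_{2}|-d_{\mathcal{M}}(e_{i+1})$, and in any case the proof of Lemma~\ref{LEMMA:K43-L2-improvement-phase-one} only manipulates codegrees.) Lemma~\ref{LEMMA:K43-L2-improvement-phase-one} then yields $\norm{\mathcal{H}_{i+1}}_{2}>\norm{\mathcal{H}_{i}}_{2}$.

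There is no deep obstacle: the claim is essentially a bookkeeping step that feeds the machinery of the earlier sections into Lemma~\ref{LEMMA:K43-L2-improvement-phase-one}. The only two things that need care are (a) the observation in the second paragraph that Phase~One never re-fills the missing edges through $e_{i+1}$, so that the crucial lower bound \ref{LEMMA:K43-L2-improvement-phase-one-3} — which relies on $e_{i+1}\in\mathcal{I}$ — survives all earlier modifications; and (b) checking that the various error terms in Claims~\ref{CLAIM:K43-L2-xi-size}, \ref{CLAIM:max-deg-B}, \ref{CLAIM:K43-B2-codegree-small}, \ref{CLAIM:K43-Mcodeg-Bcode-Kbar} (the quantities $10\delta_{1}^{1/2}$, $2\delta_{3}$, $3\delta_{3}^{1/2}$, $7\delta_{3}^{1/2}$) and the threshold $\delta_{4}$ can all be reconciled with a single choice of $\xi$ below $\xi_{\ref{LEMMA:K43-L2-improvement-phase-one}}$, which is exactly what the hierarchy $\delta_{1}\ll\delta_{3}\ll\delta_{4}\ll 1$ was set up to guarantee.
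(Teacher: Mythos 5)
Your proposal is correct and follows essentially the same route as the paper: the key point in both is that $\mathcal{M}(e_j)\cap\mathcal{M}(e_{i+1})=\emptyset$ for the pairs in $\mathcal{I}\subseteq\overline{K}[V_1,V_2,V_3]$, so $d_{\mathcal{M}_i}(e_{i+1})=d_{\mathcal{M}}(e_{i+1})$, after which Lemma~\ref{LEMMA:K43-L2-improvement-phase-one} is applied with hypotheses \ref{LEMMA:K43-L2-improvement-phase-one-1}--\ref{LEMMA:K43-L2-improvement-phase-one-5} verified exactly as you do, via Claims~\ref{CLAIM:K43-L2-xi-size}, \ref{CLAIM:max-deg-B}, \ref{CLAIM:K43-B2-codegree-small}, \ref{CLAIM:K43-Mcodeg-Bcode-Kbar} and the definition of $\mathcal{I}$ (the paper takes $\xi=7\delta_3^{1/2}$ rather than your $8\delta_3^{1/2}$, an immaterial difference).
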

    \begin{proof}[Proof of Claim~\ref{CLAIM:K43-Phase-one-2norm-increase}]
        Fix $i \in [0, \ell-1]$. 
        For every $j \in [0, \ell] \setminus \{i\}$, since $\{e_i, e_j\} \subseteq \overline{K}[V_1, V_2, V_3]$, we have $\mathcal{M}(e_i) \cap \mathcal{M}(e_j) = \emptyset$. 
        It follows that 
        \begin{align}\label{equ:K43-Phase-one-1}
            d_{\mathcal{M}_{i}}(e_{i+1}) 
            = d_{\mathcal{M}}(e_{i+1})
            \ge \delta_{4} n, 
        \end{align}
        Moreover, by Claim~\ref{CLAIM:K43-Mcodeg-Bcode-Kbar}, 
        \begin{align}\label{equ:K43-Phase-one-2}
            d_{\mathcal{M}_{i}}(e_{i+1}) 
            = d_{\mathcal{M}}(e_{i+1})
            \ge d_{\mathcal{B}}(e_{i+1}) - 7\delta_{3}^{1/2} n
            \ge d_{\mathcal{B}_{i}}(e_{i+1}) - 7\delta_{3}^{1/2} n. 
        \end{align}
        Applying Lemma~\ref{LEMMA:K43-L2-improvement-phase-one} to $\mathcal{H}_{i}$, with the parameter $\xi$ in that lemma taken to be $7\delta_{3}^{1/2}$, and noting that parts~\ref{LEMMA:K43-L2-improvement-phase-one-1}, \ref{LEMMA:K43-L2-improvement-phase-one-2}, \ref{LEMMA:K43-L2-improvement-phase-one-3},~\ref{LEMMA:K43-L2-improvement-phase-one-4}, and~\ref{LEMMA:K43-L2-improvement-phase-one-4} are verified by Claim~\ref{CLAIM:K43-L2-xi-size}, Claim~\ref{CLAIM:max-deg-B}, \eqref{equ:K43-Phase-one-1}, Claim~\ref{CLAIM:K43-B2-codegree-small}, and \eqref{equ:K43-Phase-one-2}, respectively, we conclude that $\norm{\mathcal{H}_{i+1}}_{2} > \norm{\mathcal{H}_{i}}_{2}$. 
    \end{proof}%CLAIM
    
    \medskip 
    
    \textbf{Phase Two}: 
    Let $\tilde{\mathcal{H}}_{0} \coloneqq \mathcal{H}_{\ell}$, that is, the final $3$-graph obtained at the end of Phase One. 
    Suppose that $\tilde{\mathcal{H}}_{i}$ has been defined for some $i \in [0, m-1]$.
    We then let 
    \begin{align*}
        \tilde{\mathcal{H}}_{i+1}
        \coloneqq \big( \tilde{\mathcal{H}}_{i} \setminus \mathcal{B}(\tilde{e}_{i}) \big) \cup \mathcal{M}^{\mathrm{tri}}(\tilde{e}_{i}). 
    \end{align*}
    For each $i \in [0, m]$, define  
    \begin{align*}
        \tilde{\mathcal{B}}_{i}
        \coloneqq \mathcal{B}_{\tilde{\mathcal{H}}_{i}}[V_1, V_2, V_3]
        \quad\text{and}\quad 
        \tilde{\mathcal{M}}_{i}
        \coloneqq \mathcal{M}_{\tilde{\mathcal{H}}_{i}}^{\mathrm{tri}}[V_1, V_2, V_3].
    \end{align*}
    By construction of the sequence $\tilde{\mathcal{H}}_{0}, \tilde{\mathcal{H}}_{1}, \ldots, \tilde{\mathcal{H}}_{m}$ and the property of $\mathcal{H}_{\ell}$, we have 
    \begin{align*}
        \tilde{\mathcal{B}} = \tilde{\mathcal{B}}_{0} \supseteq \tilde{\mathcal{B}}_{1} \supseteq \cdots \supseteq \tilde{\mathcal{B}}_{m}
        \quad\text{and}\quad 
        \tilde{\mathcal{M}}_{0} \supseteq \tilde{\mathcal{M}}_{1} \supseteq \cdots \supseteq \tilde{\mathcal{M}}_{m}. 
    \end{align*}
    It follows from Claim~\ref{CLAIM:K43-bad-contains-heavy} that every bad edge in $\tilde{\mathcal{B}}$ contains an element of $\mathcal{J}$. 
    Thus, the final $3$-graph $\tilde{\mathcal{H}}_{m}$ contains no edge from $\tilde{\mathcal{B}}$, and hence, no bad edge from $\mathcal{B}$.
    This means that $\tilde{\mathcal{H}}_{m} \subseteq \mathbb{C}[V_1, V_2, V_3]$. 
    
    \begin{claim}\label{CLAIM:K43-heavy-shadow-max-deg}
        % View $\mathcal{I} \cup \mathcal{J}$ as a graph on $V$. 
        % We have $\Delta(\mathcal{I} \cup \mathcal{J}) \le 120\varepsilon_{2} n$.
        We have $\Delta(\mathcal{J}) \le 40 \delta_{3} n$. 
    \end{claim}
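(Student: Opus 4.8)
The plan is a short double-counting argument at a single vertex. Fix an arbitrary $v \in V(\mathcal{H})$; by symmetry we may assume $v \in V_1$. The edges of $\mathcal{J}$ incident to $v$ are exactly the crossing pairs $\{v,w\}$ with $w \in V_2 \cup V_3$ and $d_{\mathcal{M}^{\mathrm{tri}}}(\{v,w\}) \ge n/10$; let $d_{\mathcal{J}}(v)$ denote their number, so that proving $d_{\mathcal{J}}(v) \le 40\delta_3 n$ for every such $v$ yields the claim.

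The key observation is that every transversal missing edge $\{v,a,b\} \in \mathcal{M}^{\mathrm{tri}}$ containing $v$, say with $a \in V_2$ and $b \in V_3$, is counted exactly twice in the sum $\sum_{w} d_{\mathcal{M}^{\mathrm{tri}}}(\{v,w\})$ over all crossing pairs $\{v,w\}$ at $v$: once via $w = a$ and once via $w = b$ (the third pair $\{a,b\}$ inside the triple does not contain $v$). Hence
\begin{align*}
    \sum_{w \,:\, \{v,w\}\text{ crossing}} d_{\mathcal{M}^{\mathrm{tri}}}(\{v,w\})
    = 2\, d_{\mathcal{M}^{\mathrm{tri}}}(v)
    \le 2\, d_{\mathcal{M}}(v)
    \le 2\,\Delta(\mathcal{M})
    \le 4\delta_3 n^2,
\end{align*}
where the last two inequalities use Claim~\ref{CLAIM:max-deg-B}.

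On the other hand, restricting this sum to the $d_{\mathcal{J}}(v)$ pairs $\{v,w\} \in \mathcal{J}$ and using $d_{\mathcal{M}^{\mathrm{tri}}}(\{v,w\}) \ge n/10$ for each of them gives $\tfrac{n}{10}\, d_{\mathcal{J}}(v) \le 4\delta_3 n^2$, so $d_{\mathcal{J}}(v) \le 40\delta_3 n$. Since $v$ was arbitrary, $\Delta(\mathcal{J}) \le 40\delta_3 n$, as claimed.

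I do not expect a genuine obstacle here: the whole argument is one double count, and the only point needing a moment's care is the factor $2$ (rather than $3$) in the first display — a triple contributing to $\mathcal{M}^{\mathrm{tri}}$ is transversal, so among its three pairs exactly two contain a prescribed vertex $v$. Everything else is supplied directly by Claim~\ref{CLAIM:max-deg-B}.
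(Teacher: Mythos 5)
Your proof is correct and follows essentially the same argument as the paper: a double count of the missing transversal triples at $v$ (with the factor $2$ because each such triple contains exactly two crossing pairs through $v$), combined with the codegree lower bound $n/10$ from the definition of $\mathcal{J}$ and the bound $\Delta(\mathcal{M}) \le 2\delta_3 n^2$ from Claim~\ref{CLAIM:max-deg-B}. The paper phrases it as a contradiction while you give the direct bound, but the content is identical.
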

    \begin{proof}[Proof of Claim~\ref{CLAIM:K43-heavy-shadow-max-deg}]
        Suppose to the contrary that this claim fails. 
        Let $v\in V$ be a vertex such that $d_{\mathcal{J}}(v) > 40 \delta_{3} n$. 
        % Let $N \coloneqq N_{\mathcal{J}}(v)$, noting that $|N| > 40 \delta_{3} n$. 
        It follows from the definitions of $\mathcal{J}$ that 
        \begin{align*}
            d_{\mathcal{M}}(v)
            \ge \frac{1}{2} \sum_{u \in N_{\mathcal{J}}(v)} d_{\mathcal{M}^{\mathrm{tri}}}(uv)
            \ge \frac{d_{\mathcal{J}}(v)}{2} \cdot \frac{n}{10}
            > 2\delta_{3} n^2, 
        \end{align*}
        a contradiction to Claim~\ref{CLAIM:max-deg-B}. 
    \end{proof}%CLAIM

    \begin{claim}\label{CLAIM:K43-phase-two-properties}
        For every $i \in [0,m-1]$, the $3$-graph $\tilde{\mathcal{H}}_{i}$ satisfies the following properties: 
        \begin{enumerate}[label=(\roman*)]
            \item\label{CLAIM:K43-phase-two-properties-1} $d_{\tilde{\mathcal{M}}_{i}}(\tilde{e}_{i+1}) \ge n/20$, 
            \item\label{CLAIM:K43-phase-two-properties-2} $d_{\tilde{\mathcal{M}}_{i}}(\tilde{e}_{i+1}) \ge d_{\tilde{\mathcal{B}}_{i}}(\tilde{e}_{i+1}) - 4\delta_{4} n$. 
        \end{enumerate}
    \end{claim}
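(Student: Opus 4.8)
The plan is to compare the codegree of $\tilde{e}_{i+1}$ in the families $\tilde{\mathcal{M}}_{i},\tilde{\mathcal{B}}_{i}$ with its codegree in the initial families $\tilde{\mathcal{M}}_{0},\tilde{\mathcal{B}}_{0}$, the latter being controlled because $\tilde{e}_{i+1}\in\mathcal{J}$ and by Claim~\ref{CLAIM:K43-Mcodeg-Bcode-K}. First I would record that $\tilde{\mathcal{M}}_{0}$ is exactly the original family $\mathcal{M}^{\mathrm{tri}}=\mathcal{M}^{\mathrm{tri}}_{\mathcal{H}}[V_1,V_2,V_3]$: each step of Phase One only deletes bad edges (none of type $V_1V_2V_3$) and only adds missing edges containing some $e_j\in\overline{K}[V_1,V_2,V_3]$, which have a pair inside a single part and hence are again not of type $V_1V_2V_3$; so $\mathcal{H}_{\ell}$ and $\mathcal{H}$ contain the same triples of type $V_1V_2V_3$, and $\tilde{\mathcal{H}}_0=\mathcal{H}_{\ell}$. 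In particular $d_{\tilde{\mathcal{M}}_{0}}(\tilde{e}_{i+1})=d_{\mathcal{M}^{\mathrm{tri}}}(\tilde{e}_{i+1})\ge n/10$ since $\tilde{e}_{i+1}\in\mathcal{J}$.

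The key quantitative step is to bound the loss $d_{\tilde{\mathcal{M}}_{0}}(\tilde{e}_{i+1})-d_{\tilde{\mathcal{M}}_{i}}(\tilde{e}_{i+1})$ incurred over the first $i$ steps of Phase Two. By the cyclic symmetry of $\mathbb{C}[V_1,V_2,V_3]$ I may assume $\tilde{e}_{i+1}=\{u_1,u_2\}$ with $u_1\in V_1$, $u_2\in V_2$; then every triple of $\mathbb{C}[V_1,V_2,V_3]$ containing $\tilde{e}_{i+1}$ has the form $\{u_1,u_2,w\}$ with $w\in V_3$, whose other two pairs $\{u_1,w\}$ and $\{u_2,w\}$ both lie in $K[V_1,V_2,V_3]$. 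Since $\tilde{\mathcal{M}}_{\bullet}$ is nested and the only triples of $\mathbb{C}[V_1,V_2,V_3]$ that can leave $\tilde{\mathcal{M}}$ at step $k$ are those containing $\tilde{e}_{k}$ (removing bad edges never re-adds a $\mathbb{C}$-edge to the missing family), the triple $\{u_1,u_2,w\}$ can be removed during the first $i$ steps only when $\{u_1,w\}$ or $\{u_2,w\}$ equals some $\tilde{e}_{k}$ with $k\le i$, i.e.\ lies in $\mathcal{J}$. Hence the loss is at most $d_{\mathcal{J}}(u_1)+d_{\mathcal{J}}(u_2)\le 2\Delta(\mathcal{J})\le 80\delta_{3}n$ by Claim~\ref{CLAIM:K43-heavy-shadow-max-deg}.

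Given this, part~\ref{CLAIM:K43-phase-two-properties-1} follows at once: $d_{\tilde{\mathcal{M}}_{i}}(\tilde{e}_{i+1})\ge n/10-80\delta_{3}n\ge n/20$ since $\delta_{3}\ll 1$. For part~\ref{CLAIM:K43-phase-two-properties-2} I would apply Claim~\ref{CLAIM:K43-Mcodeg-Bcode-K} to $\tilde{e}_{i+1}\in K[V_1,V_2,V_3]$ to get $d_{\mathcal{M}^{\mathrm{tri}}}(\tilde{e}_{i+1})\ge d_{\tilde{\mathcal{B}}}(\tilde{e}_{i+1})-3\delta_{4}n$, use $\tilde{\mathcal{B}}_{0}=\tilde{\mathcal{B}}$ together with the nesting $\tilde{\mathcal{B}}_{i}\subseteq\tilde{\mathcal{B}}_{0}$ to get $d_{\tilde{\mathcal{B}}_{i}}(\tilde{e}_{i+1})\le d_{\tilde{\mathcal{B}}}(\tilde{e}_{i+1})$, and combine with the loss bound to obtain $d_{\tilde{\mathcal{M}}_{i}}(\tilde{e}_{i+1})\ge d_{\tilde{\mathcal{B}}_{i}}(\tilde{e}_{i+1})-3\delta_{4}n-80\delta_{3}n\ge d_{\tilde{\mathcal{B}}_{i}}(\tilde{e}_{i+1})-4\delta_{4}n$, using $\delta_{3}\ll\delta_{4}$.

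The only point that needs genuine care — and hence the main obstacle — is the loss estimate in the second paragraph: one must see that a triple through $\tilde{e}_{i+1}$ already filled in by an earlier step of Phase Two forces one of the two ``cross'' pairs at its $V_3$-vertex into $\mathcal{J}$, so that Claim~\ref{CLAIM:K43-heavy-shadow-max-deg} (itself a consequence of the degree bound $\Delta(\mathcal{M})\le 2\delta_{3}n^2$ from Claim~\ref{CLAIM:max-deg-B}) controls the total loss. Everything else is bookkeeping about which families are monotone and matching up the hierarchy $\delta_{3}\ll\delta_{4}\ll 1$.
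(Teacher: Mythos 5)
Your proposal is correct and follows essentially the same route as the paper: bound the number of transversal missing triples through $\tilde{e}_{i+1}$ that were already restored in earlier Phase Two steps by $d_{\mathcal{J}}(u_1)+d_{\mathcal{J}}(u_2)\le 80\delta_3 n$ via Claim~\ref{CLAIM:K43-heavy-shadow-max-deg}, then deduce~\ref{CLAIM:K43-phase-two-properties-1} from $\tilde{e}_{i+1}\in\mathcal{J}$ and~\ref{CLAIM:K43-phase-two-properties-2} from Claim~\ref{CLAIM:K43-Mcodeg-Bcode-K} together with the nesting $\tilde{\mathcal{B}}_i\subseteq\tilde{\mathcal{B}}$. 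The only nitpick is the sentence claiming every $\mathbb{C}$-triple through $\tilde{e}_{i+1}$ is transversal (triples of type $V_1V_1V_2$ also contain such a cross pair), but this is harmless since $\tilde{\mathcal{M}}_i$ consists of transversal triples by definition; your explicit observation that Phase One leaves the transversal triples untouched is a point the paper only uses implicitly.
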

    \begin{proof}[Proof of Claim~\ref{CLAIM:K43-phase-two-properties}]
        Fix $i \in [0,m-1]$, and suppose that $\tilde{e}_{i+1} = \{u_1, u_2\}$. 
        By symmetry, we may assume that $(u_1, u_2) \in V_{1} \times V_{2}$. 
        Let  
        \begin{align*}
            N \coloneqq N_{\tilde{\mathcal{M}}_{i}}(u_1 u_2) \subseteq V_{3}
            \quad\text{and}\quad 
            \hat{N} \coloneqq N_{\mathcal{M}^{\mathrm{tri}}}(u_1 u_2) \subseteq V_{3}. 
        \end{align*}
        Since $\tilde{\mathcal{M}}_{i} \subseteq \mathcal{M}^{\mathrm{tri}}$, we have $N \subseteq \hat{N}$. 
        For each $w \in \hat{N} \setminus N$, by the definition of Phase Two, the reason the triple $\{u_1, u_2, w\}$ is in $\mathcal{M}^{\mathrm{tri}}$ but not in $\tilde{\mathcal{M}}_{i}$ is that either $\{u_1, w\} = \tilde{e}_j$ or $\{u_2, w\} = \tilde{e}_j$ for some $j \le i-1$.
        Hence, by Claim~\ref{CLAIM:K43-heavy-shadow-max-deg},
        \begin{align*}
            |\hat{N} \setminus N|
            \le d_{\mathcal{J}}(u_1) + d_{\mathcal{J}}(u_2)
            \le 80\delta_{3} n. 
        \end{align*}
        It follows that 
        \begin{align*}
            d_{\tilde{\mathcal{M}}_{i}}(\tilde{e}_{i+1}) 
            = |N|
            \ge |\hat{N}| - 80\delta_{3}n
            = d_{\mathcal{M}^{\mathrm{tri}}}(\tilde{e}_{i+1}) - 80\delta_{3} n
            \ge \frac{n}{10} - 80\delta_{3}
            \ge \frac{n}{20}, 
        \end{align*}
        which proves part~\ref{CLAIM:K43-phase-two-properties-1}.
        
        Combining the inequality above with Claim~\ref{CLAIM:K43-Mcodeg-Bcode-K}, we obtain
        \begin{align*}
            d_{\tilde{\mathcal{M}}_{i}}(\tilde{e}_{i+1}) 
            & \ge d_{\mathcal{M}^{\mathrm{tri}}}(\tilde{e}_{i+1}) - 80\delta_{3} n \\
            & \ge d_{\tilde{\mathcal{B}}}(\tilde{e}_{i+1}) - 3\delta_{4} n - 80\delta_{3} n
            \ge d_{\tilde{\mathcal{B}}_{i}}(\tilde{e}_{i+1}) - 4\delta_{4} n, 
        \end{align*}
        which proves~\ref{CLAIM:K43-phase-two-properties-2}.
    \end{proof}%CLAIM

    For $i \in [0, m-1]$, applying Lemma~\ref{LEMMA:K43-L2-improvement-phase-two} to $\tilde{\mathcal{H}}_{i}$, with the parameter $\xi$ in that lemma taken to be $4\delta_{4}$, and noting that parts~\ref{LEMMA:K43-L2-improvement-phase-two-1}, \ref{LEMMA:K43-L2-improvement-phase-two-2}, \ref{LEMMA:K43-L2-improvement-phase-two-3}, and \ref{LEMMA:K43-L2-improvement-phase-two-4} are verified by Claim~\ref{CLAIM:K43-L2-xi-size}, Claim~\ref{CLAIM:max-deg-B}, Claim~\ref{CLAIM:K43-phase-two-properties}~\ref{CLAIM:K43-phase-two-properties-1}, and Claim~\ref{CLAIM:K43-phase-two-properties}~\ref{CLAIM:K43-phase-two-properties-2}, respectively, we conclude that $\norm{\tilde{\mathcal{H}}_{i+1}}_{2} > \norm{\tilde{\mathcal{H}}_{i}}_{2}$.
    Consequently, we have the strictly increasing sequence 
    \begin{align*}
        \norm{\mathcal{H}}_{2}
        = \norm{\mathcal{H}_{0}}_{2}
        < \norm{\mathcal{H}_{1}}_{2}
        < \cdots 
        < \norm{\mathcal{H}_{\ell}}_{2}
        = \norm{\tilde{\mathcal{H}}_{0}}_{2}
        < \cdots 
        < \norm{\tilde{\mathcal{H}}_{m}}_{2}
        \le \norm{\mathbb{C}[V_1, V_2, V_3]}, 
    \end{align*}
    which contradicts the maximality of $\mathcal{H}$. 
    This completes the proof of Theorem~\ref{THM:L2-exact-K43}. 
\end{proof}%THM

%%%%%%%%%%%%%%%%%%%%%%%%%%%%%%%%%%%%%%%%%%%%%%
\section{Concluding remarks}\label{SEC:Remarks}
Let $\mathbb{S}_{2}$ denote the unique $3$-graph on $4$ vertices with $2$ edges. 
Let $\mathrm{N}(\mathbb{S}_2, \mathcal{H})$ denote the number of copies of $\mathbb{S}_{2}$ in a $3$-graph $\mathcal{H}$. 
By definition, for every $3$-graph $\mathcal{H}$,
\begin{align*}%\label{equ:L2-norm-S2}
    \norm{\mathcal{H}}_2 
    = \sum_{e \in \binom{V(\mathcal{H})}{2}} d_{\mathcal{H}}^2(e) 
    = \sum_{e \in \binom{V(\mathcal{H})}{2}}\left(2 \tbinom{d_{\mathcal{H}}(e)}{2}+ d_{\mathcal{H}}(e)\right)
    = 2\mathrm{N}(\mathbb{S}_2, \mathcal{H}) + 3|\mathcal{H}|. 
\end{align*}
Thus, Theorem~\ref{THM:L2-exact-K43} implies that, for large $n$, the maximum of $\mathbb{N}(\mathbb{S}_{2}, \mathcal{H}) + \frac{3}{2} |\mathcal{H}|$ over all $n$-vertex $K_4^3$-free $3$-graphs $\mathcal{H}$ is uniquely attained by $\mathbb{C}_n$. 
A slight modification of our proof shows that $3/2$ can be replaced by any constant. 

\begin{theorem}\label{THM:K43-alpha-exact}
    For every real number $\alpha$, there exists $N_{0} = N_{0}(\alpha)$ such that the following holds for every $n \ge N_{0}$.
    Suppose that $\mathcal{H}$ is an $n$-vertex $K_{4}^{3}$-free $3$-graph. 
    Then 
    \begin{align*}
        \mathbb{N}(\mathbb{S}_{2}, \mathcal{H}) + \alpha |\mathcal{H}|
        \le \mathbb{N}(\mathbb{S}_{2}, \mathbb{C}_{n}) + \alpha |\mathbb{C}_{n}|, 
    \end{align*}
    with equality holds iff $\mathcal{H} \cong \mathbb{C}_{n}$. 
\end{theorem}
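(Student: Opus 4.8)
The plan is to reduce Theorem~\ref{THM:K43-alpha-exact} to a linearly--weighted version of Theorem~\ref{THM:L2-exact-K43} and then transfer the argument of Section~\ref{SUBSEC:proof-L2-exact-K43} almost verbatim. By the identity $\norm{\mathcal{H}}_2 = 2\mathbb{N}(\mathbb{S}_2,\mathcal{H}) + 3|\mathcal{H}|$ we have $\mathbb{N}(\mathbb{S}_2,\mathcal{H}) + \alpha|\mathcal{H}| = \tfrac12\big(\norm{\mathcal{H}}_2 + c|\mathcal{H}|\big)$ with $c := 2\alpha-3$, and as $\alpha$ ranges over $\mathbb{R}$ so does $c$; hence it suffices to show that for an arbitrary fixed real $c$ and all $n \ge N_0(c)$ the functional $g(\mathcal{H}) := \norm{\mathcal{H}}_2 + c|\mathcal{H}|$ over $n$-vertex $K_4^3$-free $3$-graphs is uniquely maximized by $\mathbb{C}_n$. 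Fix a $g$-maximizer $\mathcal{H}$.

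The argument then proceeds exactly as in Section~\ref{SUBSEC:proof-L2-exact-K43} with three ingredients adjusted. First, Lemma~\ref{LEMMA:2norm-degree-regular} is replaced by its analogue for $g$: the deletion--duplication argument applies to $g$ verbatim since duplicating a vertex in a $K_4^3$-free $3$-graph preserves $K_4^3$-freeness and changes $g$ by the difference of the ``$g$-degrees'' $s_{\mathcal{H}}(v)+c\,d_{\mathcal{H}}(v)$ up to an $O(n^2)$ error; since $d_{\mathcal{H}}(v)\le\binom n2$, this yields $|s_{\mathcal{H}}(u)-s_{\mathcal{H}}(v)|\le C(c)\,n^2$ for all $u,v$, which is all Claim~\ref{CLAIM:max-deg-B} needs. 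Moreover $g(\mathcal{H})\ge g(\mathbb{C}_n)=\tfrac{n^4}{6}+O(n^3)$ together with $|\mathcal{H}|\le\binom n3$ forces $\norm{\mathcal{H}}_2\ge\tfrac{n^4}{6}-\delta n^4$, so Theorem~\ref{THM:K43-L2-stability} still applies, and the identity $s(\mathcal{H})=\tfrac1n(4\norm{\mathcal{H}}_2-3|\mathcal{H}|)$ gives $s(\mathcal{H})\ge\tfrac{2n^3}{3}-o(n^3)$, the lower bound used in~\eqref{equ:min-L2-deg}. Second, in Lemmas~\ref{LEMMA:K43-L2-improvement-phase-one} and~\ref{LEMMA:K43-L2-improvement-phase-two} one has $g(\mathcal{G}^\ast)-g(\mathcal{G}) = \big(\norm{\mathcal{G}^\ast}_2-\norm{\mathcal{G}}_2\big)+c\big(d_{\mathcal{M}}(e_\ast)-d_{\mathcal{B}}(e_\ast)\big)$ (resp.\ with $d_{\mathcal{M}^{\mathrm{tri}}}$), and the extra term has absolute value at most $|c|\,n$, which is negligible against the $\Theta(\xi^{1/2}n^2)$ positive slack in the conclusions of those lemmas once $n\ge N_0(c)$; so both lemmas hold for $g$ under the same hypotheses. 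Third, every remaining statement in the proof --- the choice of the partition $V_1\cup V_2\cup V_3=V$, Claims~\ref{CLAIM:K43-B-M-upper-bound}--\ref{CLAIM:K43-heavy-shadow-max-deg} and~\ref{CLAIM:K43-phase-two-properties}, and the two-phase procedure --- depends only on $K_4^3$-freeness, on the structure of $\mathbb{C}[V_1,V_2,V_3]$, and on the already-adjusted Claim~\ref{CLAIM:max-deg-B} and the two improvement lemmas, so it carries over without change.

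Running the two-phase procedure, under the assumption $\mathcal{B}\ne\emptyset$, again produces $\tilde{\mathcal{H}}_m\subseteq\mathbb{C}[V_1,V_2,V_3]$ with $g(\mathcal{H})<g(\tilde{\mathcal{H}}_m)$; but $\tilde{\mathcal{H}}_m$, being a subgraph of the $K_4^3$-free $3$-graph $\mathbb{C}[V_1,V_2,V_3]$, is itself $K_4^3$-free, so $g(\tilde{\mathcal{H}}_m)\le g(\mathcal{H})$ by maximality, a contradiction. Hence $\mathcal{B}=\emptyset$, i.e.\ $\mathcal{H}\subseteq\mathbb{C}[V_1,V_2,V_3]$ for a nearly-balanced partition (Claim~\ref{CLAIM:K43-L2-xi-size}). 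To conclude, observe $g(\mathbb{C}[V_1,V_2,V_3])-g(\mathcal{H}) = \sum_{e}\big(d_{\mathbb{C}[V_1,V_2,V_3]}^2(e)-d_{\mathcal{H}}^2(e)\big)+c|\mathcal{M}| \ge \sum_{E\in\mathcal{M}}\big(\sum_{e\subseteq E} d_{\mathbb{C}[V_1,V_2,V_3]}(e)+c\big) \ge |\mathcal{M}|\,(\Omega(n)+c)\ge 0$, with equality iff $\mathcal{M}=\emptyset$, since every $E\in\mathbb{C}[V_1,V_2,V_3]$ has $\sum_{e\subseteq E} d_{\mathbb{C}[V_1,V_2,V_3]}(e)=\Theta(n)$; and a routine (if tedious) calculation --- the degree-$4$ term of $\norm{\mathbb{C}[V_1,V_2,V_3]}_2$ strictly dominates $g(\mathbb{C}[V_1,V_2,V_3])$ as a function of the part sizes and, as in Fact~\ref{FACT:Cn-L2-max}, is maximized exactly at the balanced partition, while $c\,|\mathbb{C}[V_1,V_2,V_3]|$ is only $O(n^3)$ --- gives $g(\mathbb{C}[V_1,V_2,V_3])\le g(\mathbb{C}_n)$ with equality iff the partition is balanced. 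Chaining $g(\mathbb{C}_n)\le g(\mathcal{H})\le g(\mathbb{C}[V_1,V_2,V_3])\le g(\mathbb{C}_n)$ makes all these equalities, forcing $\mathcal{M}=\emptyset$ and a balanced partition, i.e.\ $\mathcal{H}\cong\mathbb{C}_n$.

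I expect the main obstacle to be confirming robustness of the two improvement lemmas under the perturbation $c|\mathcal{H}|$: the additive term $c\big(d_{\mathcal{M}}(e_\ast)-d_{\mathcal{B}}(e_\ast)\big)$ can be as large as $|c|\,n$ in magnitude --- not the much smaller $|c|\,\xi n$ --- so one must check it is still swallowed by the $\Theta(\xi^{1/2}n^2)$ slack, which is why $N_0$ must be allowed to depend on $\alpha$ (as the statement permits). A secondary, purely computational, nuisance is the final comparison $g(\mathbb{C}[V_1,V_2,V_3])\le g(\mathbb{C}_n)$: the balanced partition maximizes $\norm{\mathbb{C}[V_1,V_2,V_3]}_2$ but minimizes $|\mathbb{C}[V_1,V_2,V_3]|$, so for $c<0$ one again relies on the $n^4$-order term strictly dominating the $cn^3$-order term, valid for $n\ge N_0(\alpha)$.
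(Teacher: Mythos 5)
Your proposal is correct and is essentially the paper's own (implicit) argument: the paper proves Theorem~\ref{THM:K43-alpha-exact} only by the remark that a slight modification of the proof of Theorem~\ref{THM:L2-exact-K43} works, and your write-up carries out exactly that modification — rerunning Section~\ref{SUBSEC:proof-L2-exact-K43} for the functional $\norm{\mathcal{H}}_2 + c|\mathcal{H}|$ with the regularity lemma, the two improvement lemmas (whose $\Theta(\xi^{1/2}n^2)$ slack absorbs the extra $O(|c|n)$ term for $n \ge N_0(\alpha)$), and the final comparison adjusted. One small factual slip: the balanced partition in fact \emph{maximizes} $|\mathbb{C}[V_1,V_2,V_3]|$ among these constructions (it does not minimize it), but this is harmless — the delicate sign is still $c<0$ as you say, and the needed dominance is of the per-step $\Theta(n^2)$ gain in $\norm{\cdot}_2$ (as in the appendix proof of Fact~\ref{FACT:Cn-L2-max}) over $|c|$ times the corresponding change in $|\mathbb{C}[V_1,V_2,V_3]|$ for the near-balanced partitions arising in the proof, which holds once $n$ (and, if one wishes, the hierarchy constants) are chosen depending on $\alpha$.
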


In particular, the case $\alpha = 0$ establishes exact bound for the generalized Tur\'{a}n number $\mathrm{ex}(n,\mathbb{S}_{2}, K_{4}^{3})$ for large $n$.

Given two disjoint sets $V_1$ and $V_2$, let $\mathbb{B}[V_1, V_2]$ denote the $3$-graph on $V_{1} \cup V_2$ consisting of all triples of types $V_1 V_1 V_2$ and $V_2 V_2 V_1$ (see Figure~\ref{fig:CB}), that is, 
\begin{align*}
    \mathbb{B}[V_1, V_2]
    \coloneqq \left\{e \colon \big(|e\cap V_1|, |e\cap V_2|\big) \in \{(2,1), (1,2)\}\right\}. 
\end{align*}
When $V_1  \cup V_2 = [n]$ is a balanced partition of $[n]$, we write $\mathbb{B}_{n}$ for $\mathbb{B}[V_1, V_2]$.

The construction $\mathbb{B}_{n}$ does not contain the complete $3$-graph on five vertices $K_{5}^{3}$. 
It follows that $\pi_{\ell_{2}}(K_{5}^{3}) \ge \frac{5}{8}$. 
Using computer-assisted flag algebra computations, Balogh--Clemen--Lidick\'{y}~\cite{BCL22b} proved that $\pi_{\ell_{2}}(K_{5}^{3}) \le \frac{5}{8}$, and thus, $\pi_{\ell_{2}}(K_{5}^{3}) = \frac{5}{8}$. 
They further conjectured (see {\cite[Conjecture~1.4]{BCL22b}}) that for large $n$, the construction $\mathbb{B}_{n}$ is the unique extremal construction for the Tur\'{a}n problem of $K_{5}^{3}$ in the $\ell_{2}$-norm. 
We will confirm this conjecture along with a special case of a related conjecture of Frankl--Gryaznov--Talebanfard~\cite{FGT22} in a forthcoming work, after establishing a Tur\'{a}n theorem in vertex-colored graphs.

%%%%%%%%%%%%%%%%%%%%%%%%%%%%%%%%%%%%%%%%%%%%%%
\section*{Acknowledgments}
J.H. was supported by the National Key R\&D Program of China (No.~2023YFA1010202), the National Natural Science Foundation of China (No.~12071077), and the Central Guidance on Local Science and Technology Development Fund of Fujian Province (No.~2023L3003). 
X.L. was supported by the Excellent Young Talents Program (Overseas) of the National Natural Science Foundation of China. 
J.S. was supported by the Science and Technology Commission of Shanghai Municipality (No.~22DZ2229014) and the National Natural Science Foundation of China (No.~12331014).
%%%%%%%%%%%%%%%%%%%%%%%%%%%%%%%%%%%%%%%%%%%%%%
%%%%%%%%%%%%%%%%%%%%%%%%%%%%%%%%%%%%%%%%%%%%%%
\bibliographystyle{alpha}
\bibliography{K43L2Norm}
%%%%%%%%%%%%%%%%%%%%%%%%%%%%%%%%%%%%%%%%%%%%%
\begin{appendix}
\section*{Appendix}
\begin{proof}[Proof of Fact~\ref{FACT:Cn-L2-max}]
    Let $n_i \coloneqq |V_i|$ for $i \in [3]$. 
    It is enough to show that $|n_i - n_j| \le 1$ for all $i, j \in [3]$. 
    Suppose on the contrary that this is false. 
    We will construct a $3$-graph $\mathbb{C}[V_1', V_2', V_3']$ with $V_1' \cup V_2' \cup V_3' = [n]$ such that $\norm{\mathbb{C}[V_1', V_2', V_3']}_{2} > \norm{\mathbb{C}[V_1, V_2, V_3]}_{2}$, thus proving the lemma. 
    Without loss of generality, we may assume that $n_1 = \max\{n_1,n_2,n_3\}$. 
    Since $n_1+n_2+n_3 = n \ge 6$, we have $n_1 \ge 2$ by average. 
    By symmetry, there are two cases. 
    
    \textbf{Case 1:} $n_1 \ge n_2 \ge n_3$ and $n_1 \ge n_3 +2$. 
    
    Let $|V_1'| = n_1 -1$, $|V_2'| = n_2$, and $|V_3'| = n_3+1$. 
    Some calculations show that 
    \begin{align}\label{EQ:V1'-V1-expression1}
    \norm{\mathbb{C}[V_1', V_2', V_3']}_{2} - \norm{\mathbb{C}[V_1, V_2, V_3]}_{2} 
    = &  n_1^2 n_3 + 2n_1(n_2^2+n_3^2-n_2-n_3) + n_2^3 - n_3^3 - 2 n_2^2 n_3 \notag \\
    & - 3 n_2 n_3^2 - \frac{7}{2} n_2^2 - \frac{1}{2}n_3^2 + 2 n_2 n_3 +\frac{5}{2}n_2 + \frac{1}{2}n_3. 
    \end{align}
    It suffices to show that this expression is strictly positive. 
    When $n_3 = 0$, this is a non-decreasing linear function of $n_1$. 
    When $n_3 \ge 1$, this is a quadratic function of $n_1$, and it is increasing in $n_1 \ge 2$. 
    So the minimum value of~\eqref{EQ:V1'-V1-expression1} is attained when $n_1 = \max\{n_2, n_3+2\}$. 
    
    If $n_1 = n_2$, then it follows from~\eqref{EQ:V1'-V1-expression1} that 
    \begin{align*}
    \norm{\mathbb{C}[V_1', V_2', V_3']}_{2} - \norm{\mathbb{C}[V_1, V_2, V_3]}_{2} 
    = 3 n_1^3 -\frac{11}{2}n_1^2 - n_3 n_1^2 + \frac{5}{2}n_1 - n_3^2 n_1 -n_3^3 - \frac{1}{2}n_3^2+\frac{1}{2}n_3,  
    \end{align*}
    which is increasing in $n_1 \ge n_3 +2$. 
    So it is enough to assume that $n_1 =n_2 =n_3+2$. 
    Then we have $\norm{\mathbb{C}[V_1', V_2', V_3']}_{2} - \norm{\mathbb{C}[V_1, V_2, V_3]}_{2}  = 6 n_3^2 + 13 n_3 + 7 > 0$. 
    
    If $n_1 = n_3 +2$, then $n_2$ is one of $n_3 +2, n_3 +1$ and $n_3$. 
    Substituting it into~\eqref{EQ:V1'-V1-expression1}, then $\norm{\mathbb{C}[V_1', V_2', V_3']}_{2} - \norm{\mathbb{C}[V_1, V_2, V_3]}_{2}$ is $6n_3^2 + 13 n_3 + 7$, $6 n_3^2 + 3 n_3 $ and $6 n_3^2 - n_3$, respectively. 
    The assumptions $n_1 = n_3 +2$ and $n \ge 6$ imply that $n_3 \ge 1$. 
    Therefore, we have $\norm{\mathbb{C}[V_1', V_2', V_3']}_{2} - \norm{\mathbb{C}[V_1, V_2, V_3]}_{2} > 0$, as desired. 
    
    \textbf{Case 2:} $n_1 \ge n_3 \ge n_2$ and $n_1 \ge n_2 +2$.
    
    Let $|V_1'|=n_1 - 1$, $|V_2'| = n_2+1$, and $|V_3'| = n_3$. 
    Some calculations show that 
    \begin{align}\label{EQ:V1'-V1-expression2}
        \norm{\mathbb{C}[V_1', V_2', V_3']}_{2} - \norm{\mathbb{C}[V_1, V_2, V_3]}_{2} 
        = & \left(1- n_1\right) n_2^2 -2 n_2\left(n_1^2 + n_3^2 - 2n_1 - n_3 + 1\right) + n_1^3- \frac{9}{2}n_1^2 \notag \\
        & + \frac{13}{2}n_1 - n_3^3-\frac{1}{2}n_3^2+ \frac{9}{2}n_3+ 3 n_1^2 n_3  + 2 n_1 n_3^2- 8 n_1 n_3 -3. 
    \end{align}
    This is a quadratic function of $n_2$, and it is decreasing in $n_2 \ge 0$. 
    So it is enough to consider the case $n_2 = \min\{n_3,n_1-2\}$. 
    
    If $n_2 = n_3$, then it follows from~\eqref{EQ:V1'-V1-expression2} that 
    \begin{multline}
    \norm{\mathbb{C}[V_1', V_2', V_3']}_{2} - \norm{\mathbb{C}[V_1, V_2, V_3]}_{2} \notag \\
    = n_1^3+ n_2 n_1^2- \frac{9}{2}n_1^2+ n_2^2 n_1 + \frac{13}{2}n_1- 4 n_1 n_2- 3 n_2^3+ \frac{5}{2}n_2^2 + \frac{5}{2}n_2 -3, \notag
    \end{multline}
    which is increasing in $n_1 \ge n_2+2$. 
    So it is enough to assume that $n_1 = n_2 +2$ and $n_3 = n_2$. 
    Then we have $\norm{\mathbb{C}[V_1', V_2', V_3']}_{2} - \norm{\mathbb{C}[V_1, V_2, V_3]}_{2} = 6 n_2^2 - n_2 > 0$. 
    
    If $n_2 = n_1 -2$, then $n_3$ is one of $n_1$, $n_1 -1$ and $n_1 -2$. 
    Substituting it into~\eqref{EQ:V1'-V1-expression2}, then $\norm{\mathbb{C}[V_1', V_2', V_3']}_{2} - \norm{\mathbb{C}[V_1, V_2, V_3]}_{2}$ is $6 n_1^2 -11 n_1 +5$, $6n_1^2-15n_1+9$ and $6 n_1^2 -25 n_1+26$, respectively. 
    The assumptions $n_1 \ge n_2 + 2$ and $n \ge 6$ imply that $n_1 \ge 3$. 
    Therefore, we have $\norm{\mathbb{C}[V_1', V_2', V_3']}_{2} - \norm{\mathbb{C}[V_1, V_2, V_3]}_{2} > 0$, as desired. 
\end{proof}%FACT

\end{appendix}

\newpage 
%%%%%%%%%%%%%%%%%%%%%%%%%%%%%%%%%%%%%%%%%%%%%
\section*{\normalfont Authors}

\begin{multicols}{2}
\begin{flushleft}

\vbox{%
Levente Bodn\'ar \\
{\small Mathematics Institute and DIMAP} \\
{\small University of Warwick} \\
{\small Coventry, CV4 7AL, UK} \\
\texttt{bodnalev@gmail.com}
}

\vspace{0.7cm}

\vbox{%
Wanfang Chen \\
{\small School of Mathematical Sciences} \\
{\small University of Science and Technology of China} \\
{\small Hefei, 230026, China} \\
\texttt{a372959313@gmail.com}
}

\vspace{0.7cm}

\vbox{%
Jinghua Deng \\
{\small Center for Discrete Mathematics} \\
{\small Fuzhou University} \\
{\small Fujian, 350003, China} \\
\texttt{Jinghua\_deng@163.com}
}

\vspace{0.7cm}

\vbox{%
Jianfeng Hou \\
{\small Center for Discrete Mathematics} \\
{\small Fuzhou University} \\
{\small Fujian, 350003, China} \\
\texttt{jfhou@fzu.edu.cn}
}

\vspace{0.7cm}

\vbox{%
Xizhi Liu \\
{\small School of Mathematical Sciences} \\
{\small University of Science and Technology of China} \\
{\small Hefei, 230026, China} \\
\texttt{liuxizhi@ustc.edu.cn}
}

\vspace{0.7cm}

\vbox{%
Jialei Song \\
{\small School of Mathematical Sciences} \\
{\small Key Laboratory of MEA (Ministry of Education) \& Shanghai Key Laboratory of PMMP} \\
{\small East China Normal University} \\
{\small Shanghai 200241, China} \\
\texttt{jialsong@foxmail.com}
}

\vspace{0.7cm}

\vbox{%
Jiabao Yang \\
{\small School of Mathematics} \\
{\small Nanjing University} \\
{\small Nanjing, 210093, China} \\
\texttt{jbyang1215@nju.edu.cn}
}

\vspace{0.7cm}

\vbox{%
Yixiao Zhang \\
{\small Center for Discrete Mathematics} \\
{\small Fuzhou University} \\
{\small Fujian, 350003, China} \\
\texttt{fzuzyx@gmail.com}
}

\end{flushleft}
\end{multicols}

%%%%%%%%%%%%%%%%%%%%%%%%%%%%%%%%%%%%%%%%%%%%%
%%%%%%%%%%%%%%%%%%%%%%%%%%%%%%%%%%%%%%%%%%%%%%
\end{document}